\newcommand\cyr
\renewcommand\rmdefault{wncyr}
\renewcommand\sfdefault{wncyss}
\renewcommand\encodingdefault{OT2}
\DeclareTextFontCommand{\textcyr}{\cyr}
\newtheorem{thm}{Theorem}[section]
\newtheorem*{thm*}{Theorem}
\newtheorem{lem}[thm]{Lemma}
\newtheorem{prop}[thm]{Proposition}
\newtheorem{cor}[thm]{Corollary}
\newtheorem*{main-thm}{Trivialization Theorem}
\newtheorem*{main-thm-reg}{Trivialization Theorem for $\bf\reg{Y}$}
\newtheorem*{conj-kpi1}{The $K(\pi,1)$ Conjecture}
\newtheorem*{conj-looi}{Looijenga's Conjecture}
\theoremstyle{definition}
\newtheorem{defn}[thm]{Definition}
\newtheorem{example}[thm]{Example}
\newtheorem{remark}[thm]{Remark}
\newtheorem{question}[thm]{Question}
\newtheorem*{remark*}{Remark}
\newtheorem*{meta-conj-1}{Meta-Conjecture I}
\newtheorem*{meta-conj-2}{Meta-Conjecture II}
\newlength{\currentparindent}
\newcommand{\op}[1]{\operatorname{#1}}
\newcommand{\CC}{\mathbb{C}}
\newcommand{\bpm}{\begin{pmatrix}}
\newcommand{\epm}{\end{pmatrix}}
\newcommand{\sym}{S}
\newcommand{\reg}[1]{#1^{\op{reg}}}
\newcommand{\orb}[1]{[#1]}
\providecommand*{\twoheadrightarrowfill@}{%
  \arrowfill@\relbar\relbar\twoheadrightarrow
}
\providecommand*{\twoheadleftarrowfill@}{%
  \arrowfill@\twoheadleftarrow\relbar\relbar
}
\providecommand*{\xtwoheadrightarrow}[2][]{%
  \ext@arrow 0579\twoheadrightarrowfill@{#1}{#2}%
}
\providecommand*{\xtwoheadleftarrow}[2][]{%
  \ext@arrow 5097\twoheadleftarrowfill@{#1}{#2}%
}
\begin{document}

\title{Lyashko-Looijenga morphisms and primitive factorizations \\ of Coxeter elements}
\author{\Large Theo Douvropoulos\thanks{\href{mailto:tdouvropoulos@brandeis.edu}{tdouvropoulos@brandeis.edu}\quad This work was partially supported by the European Research Council, grant ERC-2016- STG 716083 ``CombiTop''.}}

\newcommand{\Address}{{
  \bigskip
  \footnotesize
	
  Theo~Douvropoulos, \textsc{Brandeis University, Waltham, MA}\par\nopagebreak
  \textit{E-mail address:} \texttt{tdouvropoulos@brandeis.edu}	

}}

\date{}

\maketitle

\begin{abstract}
In the seminal work \cite{bes-kpi1}, Bessis gave a geometric interpretation of the noncrossing lattice $NC(W)$ associated to a well-generated complex reflection group $W$. A chief component of this was the {\it trivialization theorem}, a fundamental correspondence between families of chains of $NC(W)$ and the fibers of a finite quasi-homogeneous morphism, the $LL$ map.

We consider a variant of the $LL$ map, prescribed by the trivialization theorem, and apply it to the study of finer enumerative and structural properties of $NC(W)$. In particular, we extend work of Bessis and Ripoll and enumerate the so-called ``primitive factorizations" of Coxeter elements $c$. These are the length additive factorizations of the form $c=w\cdot t_1\cdots t_k$, where $w$ belongs to a given conjugacy class and the $t_i$'s are reflections.
\end{abstract}

\section{Introduction}

At the end of the 19th century, Hurwitz was one of the early disciples of Riemann surface theory. In \cite{hurwitz-riemannsche-flachen}, he translated the problem of counting the number of $n$-sheeted Riemann surfaces with given branched points into a question of enumerating a class of factorizations in the symmetric group $S_n$. He described an answer to the latter, a special case of which states: 

\begin{thm}\cite[\S~7]{hurwitz-riemannsche-flachen}\label{Thm: Hurwitz n^n-2}
In the symmetric group $S_n$, there are $n^{n-2}$ smallest length factorizations $t_1\cdots t_{n-1}=(12\cdots n)$ of the long cycle in transpositions $t_i$.
\end{thm}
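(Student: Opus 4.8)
The plan is a bijective/enumerative argument anchored on Cayley's formula (in the spirit of Dénes). Write $c=(1\,2\,\cdots n)$, and for an $n$-cycle $\sigma\in S_n$ set $f_\sigma:=\#\{(t_1,\dots,t_{n-1}): t_i\text{ a transposition},\ t_1\cdots t_{n-1}=\sigma\}$; attach to a sequence $(t_1,\dots,t_{n-1})$ the graph $G$ on $[n]$ whose edges are the transposed pairs (a simple graph with at most $n-1$ edges). The first step is a structural lemma: if $t_1\cdots t_{n-1}$ is an $n$-cycle, then $G$ is a spanning tree of the complete graph $K_n$. Indeed, each $t_i$ preserves the vertex set of every connected component of $G$ and fixes $G$'s isolated vertices, hence so does the product; were $G$ disconnected, the product could not act transitively on $[n]$, contradicting that it is an $n$-cycle. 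Thus $G$ is connected on $n$ vertices, so it has at least --- and therefore exactly --- $n-1$ edges, the $t_i$ are pairwise distinct, and $G$ is a spanning tree.

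The second, and principal, step is the converse: if $\{t_1,\dots,t_{n-1}\}$ is the edge set of a spanning tree $T$ of $K_n$, then for \emph{every} ordering the product $t_1\cdots t_{n-1}$ is an $n$-cycle. I would prove this by induction on $n$ via a leaf $v$ of $T$ with neighbor $u$ and pendant edge $e=\{u,v\}$: in a given ordering, write the product as $P_1\,\tau_e\,P_2$, where $P_1,P_2$ are the products of the transpositions before and after $\tau_e$. The edges occurring in $P_1,P_2$ are exactly the edges of the tree $T-v$, so by induction $P_1P_2$ is an $(n-1)$-cycle $\gamma$ on $[n]\setminus\{v\}$. Since $P_1$ and $P_2$ fix $v$, we get $P_1\tau_e P_2=(P_1\tau_e P_1^{-1})\,(P_1P_2)=\bigl(P_1(u)\ v\bigr)\,\gamma$, and multiplying the $(n-1)$-cycle $\gamma$ by the transposition $\bigl(P_1(u)\ v\bigr)$ --- with $P_1(u)$ a point of $\gamma$ and $v$ outside it --- splices $v$ into $\gamma$ next to $P_1(u)$, yielding an $n$-cycle. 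Combining the two steps: the sequences $(t_1,\dots,t_{n-1})$ of transpositions whose product is some $n$-cycle are precisely the pairs $(T,\text{linear ordering of }E(T))$ with $T$ a spanning tree of $K_n$; by Cayley's formula there are $n^{n-2}\cdot(n-1)!$ of them.

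Finally, a symmetry argument distributes this count evenly: conjugation by any $g\in S_n$ carries $\{(t_i): t_1\cdots t_{n-1}=\sigma\}$ bijectively onto $\{(t_i): t_1\cdots t_{n-1}=g\sigma g^{-1}\}$, so $f_\sigma$ is constant over all $n$-cycles $\sigma$; since $S_n$ has $(n-1)!$ $n$-cycles and $\sum_\sigma f_\sigma=n^{n-2}(n-1)!$, every $f_\sigma$ equals $n^{n-2}$, in particular $f_c=n^{n-2}$, which is the assertion. I expect the second step to be the genuine obstacle: the structural lemma and the symmetry count are short and Cayley's formula is classical, but the statement that every ordering of a spanning tree's edges multiplies to an $n$-cycle carries the combinatorial weight and must be set up with care (the leaf induction together with the ``vertex-splicing'' description of $\gamma\mapsto(a\ v)\gamma$ is exactly what makes it go through, once a convention for composing permutations is fixed). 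Two alternatives, should one wish to bypass trees: the recursion obtained by peeling off $t_{n-1}$, which cuts $c$ into two cycles supported on complementary arcs and gives $f_n=\tfrac{n}{2}\sum_{a=1}^{n-1}\binom{n-2}{a-1}f_af_{n-a}$ with $f_1=1$, solvable by an Abel-type binomial identity; or the Frobenius character formula for $S_n$, where $n^{n-2}$ emerges from the vanishing of the irreducible characters at $n$-cycles off hook shapes, the content formula for their values at a transposition, and a standard finite-difference evaluation.
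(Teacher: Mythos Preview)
Your argument is correct and self-contained: the structural lemma (a minimal transposition factorization of an $n$-cycle yields a spanning tree), the leaf induction for the converse, and the conjugation-symmetry count are all valid, and together they give the classical D\'enes proof via Cayley's formula.

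The paper, however, does \emph{not} prove this statement. Theorem~\ref{Thm: Hurwitz n^n-2} is stated with a citation to Hurwitz and no proof follows; it serves as historical motivation and, more importantly, as the type-$A$ case of Theorem~\ref{Thm: W-Hurwitz number}, which the paper treats as an \emph{input} rather than a conclusion. Indeed the paper is explicit that the trivialization theorem rests on the numerological coincidence (Prop.~\ref{Prop: Numerological coincidence}), i.e.\ on already knowing $|\op{Red}_W(c)|=h^nn!/|W|$; so the $LL$-map machinery cannot, as things stand, independently reproduce the count $n^{n-2}$. (The paper does note in a footnote that Arnol'd regarded the quasi-homogeneous degree of $LL$ as a route to Cayley's formula, but within the paper's own logical flow the implication runs the other way.)

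So your proof is genuinely different from anything the paper does: you give a direct combinatorial derivation where the paper simply cites the result. What your approach buys is self-containment and independence from the geometric apparatus; what it does not buy is the uniform formulation for all well-generated $W$, which is precisely what the paper's $LL$-map framework is designed to address (and which, for the base case of Theorem~\ref{Thm: W-Hurwitz number}, still lacks a fully uniform proof). Your two alternative sketches---the cut-and-recurse identity and the Frobenius character computation---are also correct outlines of standard proofs, and likewise orthogonal to the paper's geometry.
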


This work by Hurwitz was a starting point for a plethora of results on the factorization enumeration of elements in the symmetric group and more general groups. Different versions of this question are amenable to a variety of techniques, ranging from bijective enumeration \cites{denes-overcounting}{goulden-yong}{schaeffer-vassilieva-bijective-Jackson}, to representation theory and the lemma of Frobenius \cites{stanley-factorization-of-permutations-into-n-cycles}{jackson-products-of-conjugacy-classes}{chapuy-stump-counting-factorizations}{reiner-lewis-stanton-singer-cycles}, to the Lagrange inversion formula for generating functions \cites{goulden-jackson-trees-and-cacti}{krattenthaler-Muller-decomposition-numbers}. 

Now, the symmetric group $S_n$ can be realized as a reflection group and many theorems regarding permutations in $S_n$ have generalizations that hold for some (wider) class of reflection groups. One might argue that this makes such statements exceptionally interesting. In particular, Hurwitz's Thm.~\ref{Thm: Hurwitz n^n-2} has a meaningful analog (see Section~\ref{Section:Complex reflection groups and their braid groups} for the terminology): 

\begin{thm}\label{Thm: W-Hurwitz number} 
For an irreducible, well-generated, complex reflection group $W$ of rank $n$, with set of reflections $\mathcal{R}$ and a Coxeter element $c\in W$, let $\op{Red}_{\mathcal{R}}(c)$ denote the set of smallest length factorizations $t_1\cdots t_n=c$ in reflections $t_i\in\mathcal{R}$. Then, if $h$ denotes the order of $c$, we have \begin{equation}|\op{Red}_{\mathcal{R}}(c)|=\dfrac{h^nn!}{|W|}.\label{Eq: W-Hurwitz number}\end{equation}
\end{thm}

This formula has a fascinating history. The calculation was first made for Weyl groups by Deligne (crediting Tits and Zagier) \cite{deligne-letter-to-looijenga} to prove a conjecture in singularity theory by Looijenga \cite[Conj.~(3.5)]{looij-complement-bifurc}. Deligne described a uniform recursion (later rediscovered by Reading \cite{reading-chains-in-noncrossing}) on the set of factorizations, leading to a recurrence for the cardinality which he then solved case-by-case. It was not observed that the answers might be expressed in a uniform way.

In fact, the first occurrence in the literature\footnote{However, it was Arnold \cite[Thm.~11]{arnold-critical-points-of-smooth-functions-ICM} who first used the expression on the RHS of \eqref{Eq: W-Hurwitz number} as a uniform formula for the quasi-homogeneous degree of the $LL$ morphism.} of the formula \eqref{Eq: W-Hurwitz number} appears in \cite[Prop.~9]{chapoton-enumerative-properties}, where reduced reflection factorizations of $c$ are interpreted as maximal chains in the noncrossing lattice $NC(W)$. Its derivation there relies on a uniform formula for the zeta polynomial of $NC(W)$, proven again case-by-case.

For arbitrary (irreducible, well-generated) complex reflection groups $W$, formula \eqref{Eq: W-Hurwitz number} is proven by Bessis \cite[Prop.~7.6]{bes-kpi1}, with combinatorial arguments for the infinite family and computer calculations 
for the exceptional cases. The lemma of Frobenius allows us to generalize the enumeration to arbitrary length factorizations \cites{chapuy-stump-counting-factorizations}{reiner-delmas-hameister-refined-count}{theo_chapuy_jucys}, but still fails, on its own, to explain the uniform formula.

It has only been recently, and only for real groups $W$, that case-free derivations of this formula were produced. The case of Weyl groups was done in \cite{jean-michel-case-free}, while \cite{theo-malle} covered all real groups. Both of these works rely on representation-theoretic techniques and in fact prove the stronger formula of \cite{chapuy-stump-counting-factorizations}. An elementary, case-free, Coxeter-theoretic proof of Thm.~\ref{Thm: W-Hurwitz number} first appeared in \cite{theo-Laplacian}.

\subsubsection*{A geometric interpretation for $\op{Red}_{\mathcal{R}}(c)$}

For our work, a different {\it geometric} exegesis of the set of reduced reflection factorizations $\op{Red}_{\mathcal{R}}(c)$ is most significant. They are related to the quasi-homogeneous Lyashko-Looijenga ($LL$) morphism, a map originating in singularity theory, but which Bessis further defined for all irreducible, well-generated, complex reflection groups $W$. 

The $LL$ map essentially describes the discriminant hypersurface $\mathcal{H}$ of $W$ as a branched covering along the direction of the highest degree invariant $f_n$ (see Defn.~\ref{Defn: LL map}). Bessis' trivialization theorem explains the relation with block factorizations; we postpone the complete statement until Section~\ref{Section: The trivialization theorem} and provide a lighter version for now:

\begin{main-thm}
The elements in a generic fiber of the $LL$ map are in a natural $1$-$1$ correspondence with the set $\op{Red}_{\mathcal{R}}(c)$ of reduced reflection factorizations of a Coxeter element \nolinebreak$c$.
\end{main-thm}

Here, it is important to warn the reader that the current proof of the trivialization theorem relies on Thm.~\ref{Thm: W-Hurwitz number} (see Prop.~\ref{Prop: Numerological coincidence}). Bessis constructs a labeling map that assigns factorizations to elements in the fiber of the $LL$ map, but neither surjectivity nor injectivity can be proven {\it a priori}. 

The $LL$ map and the trivialization theorem are fundamental in Bessis' proof of the $K(\pi,1)$ conjecture, namely that the complement $V^{\op{reg}}:=V\setminus\bigcup H$ of the reflection arrangement of $W$ has a contractible universal covering space. Bessis uses the $LL$ map to produce the dual braid presentation of the generalized braid group $B(W):=\pi_1(W\setminus V^{\op{reg}})$, and the combinatorics of $\op{Red}_{\mathcal{R}}(c)$ (or really of $NC(W)$) to construct the universal cover (which is identical for $W\setminus V^{\op{reg}}$ and $V^{\op{reg}}$) and to show that it is contractible.

The $K(\pi,1)$ conjecture for reflection arrangements was one of the most significant questions in the theory of hyperplane arrangements. Its importance stems from the fact that it guarantees a simple calculation of the (group-theoretic) cohomologies of the pure braid groups $P(W)$ (this was actually the context when Brieskorn first stated the conjecture for real $W$; see \cite[\S~2]{brieskorn-sur-les-groupes-de-tresses}). 

This line of research was inspired by work of Arnold on algebraic functions, their discriminants, and the resulting interactions with the ordinary braid group $B_n$. For a detailed presentation of this story, and the connection with Arnold's pioneering ideas for the {\it algebraic} version of Hilbert's thirteenth problem, see \cite[\S~1.1]{theo-thesis}.

\subsubsection*{Enumeration via degree counting}

In the symmetric group case (see also Section~\ref{sec:LL_on_polys}), the original definition of the $LL$ map describes it as the morphism that sends a polynomial $p(z)\in \CC[z]$ to its multiset of critical values. Under this interpretation, the bijective correspondence of the trivialization theorem above is guaranteed by Riemann's existence theorem \cite[Thm.~1.8.14]{LZ-graphs-on-surfaces}. Even though this is implicit in Looijenga's proof of the type-$A$ case \cite[(3.6)]{looij-complement-bifurc}, it was 20 years later and it was Arnold \cite{arnold-first-LL-application} who first realized that the $LL$ map can thus be used to {\it produce} enumerative results.\footnote{In fact Arnold was known to state \cite{arnold-first-LL-application} that {\it ``The simplest way to prove this theorem of Cayley} [the tree enumeration, equivalent to Thm.~\ref{Thm: Hurwitz n^n-2}] {\it is perhaps to count the multiplicity of the quasi-homogeneous Lyashko-Looijenga mapping"}.}

The number of elements of a generic fiber is a classical invariant of finite morphisms, called the {\it degree} of the map; it is not always easy to compute. Nevertheless, when the morphism is quasi-homogeneous, as is the case with the $LL$ map, Bezout's theorem \cite[Thm.~5.1.5]{LZ-graphs-on-surfaces} provides a very simple formula for the degree. Specifically, if the map $F$ is given as $$ \CC^n\ni{\bm u}:=(u_1,\cdots,u_n)\xrightarrow{\ \ F \ \ } \big(f_1({\bm u}),\cdots,f_n({\bm u})\big)\in\CC^n,$$ where the $f_i$'s are quasi-homogeneous polynomials in the $u_i$'s, then its degree equals \begin{equation}\op{deg}(F)=\dfrac{\prod_{i=1}^n\op{deg}(f_i)}{\prod_{i=1}^n\op{deg}(u_i)}.\label{Eq: Bezout's theorem}\end{equation}  

After Arnold, there was a wealth of work \cites{goryunov-lando-enumeration-meromorphic}{elsv}{LZ-mult-of-LL}{baines-LL-even-and-odd}{LZ-counting-I} where variants of the $LL$ map were considered in order to enumerate combinatorial objects usually associated with factorizations in the symmetric group $S_n$. The degree calculations are not always easy (especially so in \cite{elsv}), but a common idea is to {\it lift} the $LL$ map to a suitable domain (often some affine space $\CC^N$), where it becomes quasi-homogeneous.   

\subsubsection*{Main Theorem}

The purpose of this paper, is to advertise and apply such geometric techniques, in the context of Bessis' $LL$ map and (irreducible, well-generated) complex reflection groups. In particular, the parabolic stratification of the discriminant hypersurface $\mathcal{H}$ of $W$ (Section~\ref{Section: The parabolic stratification of H}) allows for a local study of the $LL$ map that leads to the enumeration of finer sets of factorizations of the Coxeter element $c\in W$. Any length additive factorization of $c$ is called a {\it block factorization} (see Section~\ref{Section:Complex reflection groups and their braid groups} for more details), while block factorizations where only the first factor is not a reflection are called {\it primitive}.

\begin{thm*}[Section~\ref{Section: Primitive factorizations of a Cox elt}]
Let $W$ be a well-generated complex reflection group, acting irreducibly on the space $V$, and let $Z$ be a flat of the reflection arrangement of $W$. Then, the number $\op{Fact}_W(Z)$ of primitive factorizations of a Coxeter element $c\in W$ of the form $w\cdot t_1\cdots t_k=c$ where $V^w$ belongs to the $W$-orbit of $Z$, the $t_i$'s are reflections, and $k=\op{dim}(Z)$, is given by:$$\op{Fact}_W(Z)=\dfrac{h^k\cdot k!}{[N_W(Z):W_Z]}.$$ Here $h$ is the Coxeter number of $W$, and $N_W(Z)$ and $W_Z$ the setwise and pointwise stabilizers of $Z$ respectively.
\end{thm*}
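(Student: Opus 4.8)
The plan is to realize $\op{Fact}_W(Z)$ as the degree of a quasi-homogeneous restriction of the $LL$ map and to evaluate that degree with Bezout's formula \eqref{Eq: Bezout's theorem}. First I would set up the geometry. By the (full) trivialization theorem, a reduced block factorization $c = w\cdot t_1\cdots t_k$ in which the $t_i$ are reflections and the fixed space $V^w$ lies in the $W$-orbit of $Z$ corresponds to a point in a fiber of $LL$ over a configuration with one point of multiplicity $\op{codim}(Z)=n-k$ sitting on the parabolic stratum $\mathcal H_{[Z]}$ of the discriminant $\mathcal H$, together with $k$ simple points --- each of the latter lying on the smooth locus of $\mathcal H$, and so contributing a reflection. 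Such configurations sweep out a $k$-dimensional subvariety $Y_Z\subseteq Y$ (a component of the preimage under $LL$ of the closure of the relevant stratum of $E_n$), and $\op{Fact}_W(Z)$ should equal the degree of the restriction $LL|_{Y_Z}$, pulled back along the normalization of $Y_Z$; call this morphism $\widetilde{LL}_Z$.

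Next I would identify the normalized source and the target as weighted affine spaces. The closure $\overline{\mathcal H_{[Z]}}\subset V/W$ is the image of the flat $Z$ under $V\to V/W$; since a generic $v\in Z$ has the same image as $gv$ ($g\in W$) only when $g\in N_W(Z)$, while $W_Z$ acts trivially on $Z$, the normalization of $\overline{\mathcal H_{[Z]}}$ is $Z/\widehat W_Z$ with $\widehat W_Z:=N_W(Z)/W_Z$. It is known that $\widehat W_Z$ acts on $Z$ as a complex reflection group, so $Z/\widehat W_Z\cong\CC^k$, with the $\CC^\times$-scaling inherited from $V$ acting by the invariant degrees $d_1^Z,\dots,d_k^Z$ of $\widehat W_Z$; in particular $\prod_i d_i^Z=|\widehat W_Z|=[N_W(Z):W_Z]$. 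On the target side, deleting from a configuration its $(n-k)$-fold point (after translating it to $0$) identifies the image of $\widetilde{LL}_Z$ with the configuration space $E_k\cong\CC^k$ of $k$ points, whose coordinates --- the elementary symmetric functions of the points --- scale with weights $h,2h,\dots,kh$, since $f_n$ has weight $h$. Using the local model of $\mathcal H$ near a generic point of $\mathcal H_{[Z]}$ (the product of the stratum with the discriminant of $W_Z$) and the transversality of the $f_n$-direction to the strata, one checks that $\pi\colon V/W\to Y$ restricts to a finite, generically injective map $\overline{\mathcal H_{[Z]}}\to Y_Z$, so $Z/\widehat W_Z$ is also the normalization of $Y_Z$, with the stated weights.

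Then I would apply Bezout. The map $\widetilde{LL}_Z\colon Z/\widehat W_Z\cong\CC^k\to E_k\cong\CC^k$ is quasi-homogeneous (everything is built from the $\CC^\times$-equivariant invariants $f_i$ and from the discriminant), and it is finite: if $\widetilde{LL}_Z(\tilde p)=0$ then the configuration $LL(\pi(p))$ collapses to a single point, i.e.\ $\pi(p)$ is the cone point of $Y$, which forces $p$ onto the $f_n$-axis --- a line meeting $\mathcal H$ only at the origin; here one uses finiteness of $LL$ together with the nonvanishing of the pure $f_n^n$-term of the discriminant. Hence \eqref{Eq: Bezout's theorem} gives
\[
\deg\widetilde{LL}_Z=\frac{h\cdot 2h\cdots kh}{d_1^Z\cdots d_k^Z}=\frac{h^k\,k!}{[N_W(Z):W_Z]}.
\]
It remains to see that $\deg\widetilde{LL}_Z$ --- the cardinality of a generic fiber --- is exactly $\op{Fact}_W(Z)$: the trivialization theorem, applied to this stratum, puts a generic fiber point in bijection with a factorization $c=w\cdot t_1\cdots t_k$ whose $(n-k)$-fold point, marked by the normalization $Z/\widehat W_Z$, forces $V^w$ into the $W$-orbit of $Z$, and whose $k$ simple points, in their cyclic order, give $t_1,\dots,t_k$.

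The step I expect to be the main obstacle is precisely this last identification: carrying Bessis' labeling over to the present ``block'' situation and checking that it is a bijection onto the set of primitive factorizations --- both that every such factorization arises and that distinct fiber points yield distinct ones --- and, most delicately, that passing to the normalization $Z/\widehat W_Z$ is exactly what records the $W$-orbit of $V^w$ without over- or under-counting. A secondary technical point is to justify the global inputs to the Bezout computation: that $Y_Z$ is irreducible with normalization exactly $Z/\widehat W_Z$ (so that no extra component dilutes the degree) and that the $\CC^\times$-weights on the two sides are as claimed; these rest on the parabolic stratification of $\mathcal H$ and on the genericity of the top invariant $f_n$.
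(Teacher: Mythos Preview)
Your overall strategy is close to the paper's, but there is a genuine gap: you assert that $\widehat{W}_Z := N_W(Z)/W_Z$ acts on $Z$ as a complex reflection group, so that $Z/\widehat{W}_Z \cong \CC^{\dim Z}$ with invariant degrees $d_1^Z,\ldots,d_{\dim Z}^Z$ whose product is $[N_W(Z):W_Z]$. This is \emph{not} true in general for well-generated complex reflection groups. When $\widehat{W}_Z$ fails to act as a reflection group on $Z$, the quotient $Z/\widehat{W}_Z$ is a singular affine variety, there are no invariant degrees to feed into Bezout, and your computation of $\deg\widetilde{LL}_Z$ collapses. The paper in fact treats your argument exactly as the favorable special case, in \S\ref{Section: When N(Z)/W_Z acts as a reflection group}, citing \cite{douglass-restricting-invariants} for sufficient conditions and noting explicitly that $C_Z=\widehat{W}_Z$ can fail to be a reflection group.

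The paper's proof sidesteps this by lifting one step further, to the flat $Z$ itself rather than to its quotient. Since $Z\cong\CC^{\dim Z}$ with \emph{linear} coordinates (all of weight~$1$), the lifted map $\widehat{LL}\colon Z\to E_{(\op{codim} Z,\,1^{\dim Z})}$ is always a quasi-homogeneous morphism between affine spaces, and Bezout gives
\[
\deg(\widehat{LL})=\prod_{i=1}^{\dim Z} hi = h^{\dim Z}\,(\dim Z)!
\]
with no hypothesis on $\widehat{W}_Z$. The index $[N_W(Z):W_Z]$ then enters not as a product of invariant degrees but as the size of a generic fiber of $\rho|_Z\colon \reg{Z}\to[\reg{Z}]$: comparing the two routes $Z\to E_{(\op{codim} Z,\,1^{\dim Z})}\to E_n$ and $Z\to[Z]\to[Z]_Y\to E_n$ over a suitable configuration $e$, one finds that each point of $LL^{-1}(e)\cap[Z]_Y$ has exactly $[N_W(Z):W_Z]$ preimages in $Z$. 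Finally, the ``main obstacle'' you anticipate --- matching fiber points with primitive factorizations --- is handled directly by the full trivialization theorem (Thm.~\ref{thm: trivialization}), which already covers block factorizations; no new labeling argument is required.
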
  

\begin{example}
Let $W$ be the symmetric group $S_4=A_3$, seen via its reflection representation as a subgroup of $\operatorname{GL}(\mathbb{C}^3)$, and denote its (reflection) hyperplanes by $H_{ij}$ ($1\leq i < j \leq 4$). Consider the $1$-dimensional flat $Z=H_{13}\cap H_{24}$ and the Coxeter element $c=(1234)\in S_4$ (written in cycle notation). 

\noindent The $S_4$-orbit of $Z$ consists of the three flats $V^{w_i}$ for  elements $(w_i)_{i=1}^3=\Big((12)(34),(13)(24), (14)(23)\Big)$. Among these, $w_1$ and $w_3$ are noncrossing, and they give us the factorizations 
\[
(12)(34)\cdot (24)=(1234) \quad\quad\text{and}\quad\quad (14)(23)\cdot (13)=(1234),
\]
while $w_2$ is crossing and thus there is no reflection $t$ with $(13)(24)\cdot t=(1234)$. That is, $\op{Fact}_{S_4}(Z)=2$.

\noindent Now, we can also calculate the right hand side of the equation in the theorem:
\[
N_{S_4}(Z)=\Big\{e,(13),(24),(13)(24),(1234),(14)(23),(12)(34),(1432)\Big\},
\]
while 
\[
(S_4)_Z=\{e,(13),(24),(13)(24)\},
\]
and the formula gives $\frac{h^kk!}{[N_W(Z):W_X]}=\frac{4^11!}{8/4}=2$ confirming the theorem for this case.
\end{example}

Our formula can easily be seen to generalize the formulas of Bessis (Thm.~\ref{Thm: W-Hurwitz number}) and Hurwitz (Thm.~\ref{Thm: Hurwitz n^n-2}), by setting $Z=V$ (see also Rem.~\ref{Remark:Eval at H and H'}) and by further setting $W=\sym_n$ respectively. It should also be considered as a further generalization of \cite[Thm.~4.1]{rip-submax}, although we use a different approach. In particular, Ripoll's formula only allows flats $Z$ of codimension $2$, and is in a sense less explicit than ours. The results of \cite{krattenthaler-Muller-decomposition-numbers} on the other hand, recover the previous theorem for real $W$, but they are based on case-by-case considerations for the infinite families and computer calculations for exceptional groups, and the uniform formula is not observed. Moreover, our main Thm.~\ref{Thm: Primitive facto enume} also leads to an explicit ramification formula for the $LL$ map which gives a geometric proof (Thm.~\ref{thm:parab_rec_cox}) for a parabolic recursion relating the Coxeter number $h$ of $W$ with the Coxeter numbers of its parabolic subgroups.

To prove our main Thm.~\ref{Thm: Primitive facto enume}, we first lift the $LL$ morphism to a map $\widehat{LL}$ with domain $Z$ and target a {\it decorated} configuration space. Then, we compare the degree of the new map $\widehat{LL}$ with the number of primitive factorizations via the trivialization theorem. The index $[N_W(Z):W_Z]$ appears naturally as an overcounting factor. 

As we mentioned earlier, the trivialization theorem relies on the enumeration result of Thm.~\ref{Thm: W-Hurwitz number}. Our results are uniform extensions of it, but still depend on it non-trivially. In particular, the proof of  Thm.~\ref{Thm: Primitive facto enume} is case-free for the same class of groups that the trivialization theorem holds uniformly (currently all real reflection groups).

\subsubsection*{Summary}

In Section~\ref{Section:Complex reflection groups and their braid groups}, we recall necessary background for complex reflection groups and braid groups. In the following Sections~\ref{Section: Geometric facto's} and~\ref{Section: The LL map and the trivialization theorem}, we review part of Bessis' theory with particular attention to the geometric properties of the $LL$ map upon which our work relies. Moreover, in Section~\ref{sec: geom of LL} we fill in some non-trivial gaps of \cite{bes-kpi1} and replace a faulty geometric argument for the proof of the finiteness property of the $LL$ map. We hope this will be useful to the community as the finiteness property is the main geometric reason for the nice behavior of the $LL$ map. For instance, in his survey \cite{vassiliev-a-few-problems-on-monodromy-and-discriminants} Vassiliev notes that a main obstruction to applying the $LL$ technique is that the $LL$ map might not be proper (this happens for many non-simple singularities). We add to this a Section~\ref{sec:LL_on_polys}, where we describe the relationship between Bessis' $LL$ map and the  classical $\mathcal{LL}$ map on polynomials.

In Section~\ref{Section: Primitive factorizations of a Cox elt} we introduce the morphism $\widehat{LL}$, study its geometry, and prove our main theorem. In Section~\ref{Section: A geometric interpretation of Kreweras numbers}, we speculate on how our lifted $\widehat{LL}$ map might be used towards a uniform, equivariant enumeration of noncrossing partitions. Finally, in the last section~\ref{Section: The shadow stratification}, we define and study the {\it shadow stratification} associated to the group $W$. This is in a sense the canonical geometric object related with the enumeration (Prop.~\ref{Prop: local degree of LL and Fact}) and structural properties (Thm.~\ref{Thm: Hurwitz action and path connectedness}) of block factorizations of Coxeter elements. We hope that the framework we build there might be used towards a possible {\it geometric} generalization of the Goulden-Jackson formula, uniformly given for reflection groups (see Section~\ref{Section: The complete answer is known in type A}).

\section{Complex reflection groups and their braid groups}
\label{Section:Complex reflection groups and their braid groups}

A {\color{blue} complex reflection group} $W$ is a finite subgroup of $\op{GL}(V)$, for some space $V\cong \CC^n$, that is generated by unitary reflections (also called {\it pseudo-reflections} later on). These are $\CC$-linear maps $t$ that fix a hyperplane in $V$, i.e. for which $\op{codim}(V^t)=1$. We denote by $\mathcal{R}$ the set of (all) unitary reflections of $W$ and by $\mathcal{A}$ (or $\mathcal{A}_W$) the set of their fixed hyperplanes. The group $W$ is called {\color{blue} irreducible} if it leaves no nontrivial subspace of $V$ invariant. Shephard and Todd \cite{shephard-and-todd} classified such groups $W$ into one infinite, 3-parameter family $G(d,r,n)$, and 34 exceptional cases (indexed $G_4$ to $G_{37}$). Most of the material in this section can be found in the classical references \cites{broue-book-braid-groups}{kane-book-reflection-groups}{lehrer-taylor-unitary-reflection-groups}.

Complex reflection groups act on the polynomial algebra $\CC[V]:=\op{Sym}(V^*)$ of the ambient space by precomposition; that is, we define $(w\star f)(v):=f(w^{-1}\cdot v)$. The Shephard-Todd-Chevalley theorem \cite{chevalley-improves-Shep-and-Todd} states that, under the action of a complex reflection group $W$, the invariant algebra $\CC[V]^W:=\{f\in \CC[V] \ |\ w\star f=f\  \text{for all}\ w\in W\}$ is itself a polynomial algebra and of the same Krull dimension $n$. We write $f_i$ for its generators (that is, $\CC[V]^W=\CC[f_1,\cdots,f_n]$) and we call them the {\color{blue} fundamental invariants} of $W$. We choose them to be homogeneous and index them by increasing degree order (i.e. $\op{deg}(f_i)\leq\op{deg}(f_{i+1})$). Their degrees $d_i:=\op{deg}(f_i)$ are then independent of our choice of $f_i$'s; we call them the {\color{blue} degrees} of $W$.  

Since they are finite and act linearly, complex reflection groups already have a very simple {\it Geometric Invariant Theory} (GIT). The Shephard-Todd-Chevalley theorem implies that, in fact, they have the best possible; the orbit space $W\setminus V$ is an affine complex space ($W\setminus V\cong \CC^n$). Moreover the quotient map $\rho: V\rightarrow W\setminus V$ is given {\it explicitly} via the fundamental invariants:

\begin{equation}\label{Eq: quotient map rho}
\CC^n\cong V\ni {\bm z}:=(z_1,\cdots,z_n)\xrightarrow{\rho}{\bm f(\bm z)}:=(f_1({\bm z}),\cdots,f_n({\bm z})\big)\in W\setminus V\cong\CC^n
\end{equation} 

An irreducible complex reflection group $W$ acting on an $n$-dimensional space, can always be generated by either $n$ or $n+1$ reflections (fewer reflections are never sufficient). In what follows we will restrict ourselves to groups in the first category, which are called {\color{blue} well-generated}. Of their various properties and equivalent characterizations \cites[Prop.~4.2]{bes-zariski}[Thm.~2.4]{bes-kpi1}, key for us is that they have good analogs of Coxeter elements.

Recall first the {\it generalized Coxeter number} $h$ of $W$, as defined by Gordon and Griffeth \cite{gordon-griffeth-catalan}. It is given by $\displaystyle h:=\frac{N^*+N}{n}$, where $N^*:=|\mathcal{R}|$ and $N:=|\mathcal{A}|$ are, respectively, the numbers of pseudo-reflections and of reflecting hyperplanes of $W$, and $n$ is its rank. For a well-generated group $W$, $h$ is equal to the highest degree $d_n$. Then we have:

\begin{defn}\cite[Defn.~7.1]{bes-kpi1}\label{Defn:Cox elt}
We set $\zeta=e^{2\pi i/h}$ and define a {\color{blue} Coxeter element} $c$ of $W$ to be a $\zeta$-regular element in the sense of Springer \cite{springer-regular-elements}. That is, $c$ has a $\zeta$-eigenvector $v$ that lies in no reflection hyperplane. In well-generated groups $W$, Coxeter elements exist\footnote{In fact, this is yet another characterization of well-generated groups, as one can easily see from \cite[Prop.~4.2]{bes-zariski} and known properties of regular numbers.} and they form a single conjugacy class (but see \cite{reiner-ripoll-stump-non-conjugate-coxeter-elements} for a generalization).
\end{defn}

\subsubsection*{Block factorizations of $c$ and the Hurwitz action on them}

The {\it \color{blue} reflection length} $l_{\mathcal{R}}(w)$ of an element $w\in W$, is the smallest number $s$ of (pseudo)-reflections $t_i$ needed to factor $w=t_1\cdots t_s$ (this shouldn't be confused with the \emph{Coxeter length} in real reflection groups where only \emph{simple} reflections may be used; sometimes authors use the term \emph{absolute reflection length} instead of ``reflection length" to emphasize the distinction). This length function determines a partial order $\leq_{\mathcal{R}}$ (the {\it absolute order}) on the elements of $W$: $$u\leq_{\mathcal{R}} v \iff l_{\mathcal{R}}(u)+l_{\mathcal{R}}(u^{-1}v)=l_{\mathcal{R}}(v).$$ 

The {\it \color{blue} noncrossing lattice $NC(W)$} is the interval $[1,c]_{\leq_{\mathcal{R}}}$, in the absolute order $\leq_{\mathcal{R}}$, between the identity $1$ and an arbitrary Coxeter element $c$. Since conjugation respects the set of reflections, the various choices of $c$ give isomorphic lattices and for this reason we use the notation $NC(W)$ instead of a more cumbersome $NC(W,c)$. An element $c_i$ will be further called {\it noncrossing with respect to $c$}, for some Coxeter element $c$, if $c_i\leq_{\mathcal{R}} c$. 

We call an expression $c=w_1\cdot w_2\cdots w_k$ a {\it (reduced)} {\it \color{blue} block factorization} of $c$, if it is length additive; that is, if \begin{equation}l_{\mathcal{R}}(c)=l_{\mathcal{R}}(w_1)+\cdots l_{\mathcal{R}}(w_k).\label{EQ: block factorizations}\end{equation} If all the $w_i$'s are moreover pseudo-reflections, we call it a {\it reduced reflection factorization} of $c$. The set of all reduced reflection factorizations of $c$ is denoted by $\op{Red}_{\mathcal{R}}(c)$ and is in bijection with the set of maximal chains of $NC(W)$.

\begin{defn}\label{Defn: The (algebraic) Hurwitz action}
For {\it any} group $G$, there is a natural action of the braid group on $k$ strands $B_k$ on the set of $k$-tuples of elements of $G$. The generator $s_i$ acts via:
$$ s_i*(g_1,\cdots,g_{i-1},\ \ g_i,g_{i+1},\ \ g_{i+2},\cdots,g_k)=(g_1,\cdots,g_{i-1},\ \ g_{i+1},g^{-1}_{i+1}g_ig_{i+1},\ \ g_{i+2},\cdots,g_k).$$
We call this the (right) {\it \color{blue} Hurwitz action} of $B_k$ on $G^k$. It respects the product of the elements $g_i$ and is therefore well defined on the set of block factorizations of $c$.
\end{defn}

\subsection{The braid group \texorpdfstring{$B(W)$}{B(W)} and the discriminant hypersurface \texorpdfstring{$\mathcal{H}$}{H}}

In 1925 Artin \cite{artin-braid-groups-1925} introduced the braid group on $n$ strands $B_n$ and gave a constructive proof of its presentation: $$B_n:=\langle s_1,\cdots,s_{n-1} |\ s_{i+1}s_is_{i+1}=s_is_{i+1}s_i,\ s_is_j=s_js_i,\ j\neq i\pm 1\ \rangle.$$
He noticed that one can obtain the symmetric group $S_n$ by imposing the extra conditions $s_i^2=1$ \cite[Satz~3]{artin-braid-groups-1925}. There are natural generalizations of $B_n$ that are related to the other reflection groups in a similar fashion. The following definition, first by Brieskorn \cite{brieskorn-braid-groups-have-artin-like-presentations}, introduces them in the most suitable way for our geometric study of Coxeter elements and 
their factorizations:

\begin{defn}
Let $W\leq \op{GL}(V)$ be a complex reflection group and $V^{\op{reg}}$ the set of points in $V$ that have a trivial stabilizer under the $W$-action. We define the {\color{blue} braid group $B(W)$} to be the fundamental group of the space of regular orbits of $W$: $$B(W):=\pi_1(W\setminus V^{\op{reg}}).$$
\end{defn}

It is a theorem of Steinberg that the pointwise $W$-stabilizer of any $x\in V$ is generated by those reflections whose hyperplanes contain $x$. In particular, $W$ acts freely precisely on the complement of the reflection arrangement $\mathcal{A}_W$ of $W$ (that is, $V^{\op{reg}}=V\setminus \bigcup H$). We call the fundamental group $\pi_1(V^{\op{reg}})$ the {\color{blue} pure braid group} $P(W)$. The following short exact sequence, which is an immediate corollary of covering space theory \cite[Prop.~1.40]{hatcher-book-alg-top}, defines a surjection $\bm\pi:B(W)\twoheadrightarrow W$, analogous to the one between $B_n$ and $S_n$:
\begin{equation}\label{EQ:1PBW1}
1\rightarrow\underset{\overset{\rotatebox{90}{\,:=}}{\displaystyle P(W)}}{\pi_1(\reg{V})}
\xhookrightarrow{\ \displaystyle \rho_*\ }\underset{\overset{\rotatebox{90}{\,:=}}{\displaystyle B(W)}}{\pi_1(W\backslash\reg{V})}
\xtwoheadrightarrow{\ \displaystyle \bm\pi} W^{\op{op}}\rightarrow 1
\end{equation}

After a choice of a basepoint $v\in V^{\op{reg}}$, a loop $b\in B(W)$ lifts to a path that runs from $v$ to some final point which we denote $b_*(v)$ (this is called the {\it Galois action} of $b$ on $v$). Since now $v$ and $b_*(v)$ are in the same (free) $W$-orbit, there is a unique element $w\in W$ such that $w\cdot v=b_*(v)$. We define $\bm\pi(b):=w$ and obtain the surjection $\bm\pi$ which, after the choice of the basepoint $v$, we consider fixed.

The combinatorics of a real reflection group is to a great extent governed by its reflection arrangement $\mathcal{A}_W=\bigcup H$ which decomposes the ambient space into chambers. In the complex case, where a chamber decomposition does not exist, it is often more effective to look at the quotient $W\setminus \mathcal{A}_W$. The latter is in fact a variety (another consequence of GIT) which we call the {\color{blue} discriminant hypersurface} $\mathcal{H}$ of $W$. It is the zero set of a single polynomial in the fundamental invariants $f_i$'s, which we denote $\Delta(W,{\bm f})$, and which we also call the {\it discriminant} of $W$. Given a choice of linear forms $\alpha_H$ that define the hyperplanes $H\in\mathcal{A}_W$, the discriminant of $W$ is given as follows:
\begin{equation}
\Delta(W,{\bm f})=\prod_{H\in \mathcal{A}_W}\alpha^{}_H(\bm z)^{e^{}_H},\label{eq:discr_prod_aH_eH}
\end{equation}
where $e_H$ is the order of the cyclic group $W_H$ (and $e_H=2$ always for real $W$). The right hand side of \eqref{eq:discr_prod_aH_eH} is $W$-invariant, so that the left hand side is indeed a polynomial in the $f_i$. Now, its particular structure as a polynomial in the $f_i$ defines a criterion for being well-generated, which from now on will serve as our main assumption on $W$:

\begin{prop}[{for real $W$ see Sec.~3 of \cite{saito-on-a-linear-structure}, for the general case see Thm.~2.4 of \cite{bes-kpi1}}]\label{Prop: Discriminant is monic}\ \newline
Let $W$ be an irreducible complex reflection group. Then $W$ is well-generated if and only if for any system of basic invariants ${\bm f}$, we have that $\Delta(W,{\bm f})$, viewed as a polynomial in the highest degree invariant $f_n$, is {\bf monic and of degree ${\bm n}$}. In that case, we can further reduce it to:
\begin{equation}\label{Eq:Discriminant monic}
\Delta(W,{\bm f})=f_n^n+\alpha_2f_n^{n-2}+\cdots +\alpha_n,
\end{equation} where $\alpha_i\in\CC[f_1,\cdots,f_{n-1}]$, are quasi-homogeneous polynomials of weighted degree $\op{deg}(\alpha_i)=hi$. 
\end{prop}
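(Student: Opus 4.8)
\emph{Proof plan.} The plan is to turn the equivalence into a quasi-homogeneity count on $\Delta(W,\bm f)$, using as the only substantive inputs the facts recalled above — that $d_n=h$ for well-generated $W$, and that such $W$ possess a Coxeter element, i.e.\ an $e^{2\pi i/h}$-regular element — together with the list of equivalent characterizations of well-generatedness \cite[Prop.~4.2]{bes-zariski}. The starting point is the standard identity $\Delta(W,\bm f)\bigl(f_1(\bm z),\dots,f_n(\bm z)\bigr)=\mathrm{const}\cdot\prod_H\alpha_H(\bm z)^{e_H}$, where $\alpha_H$ is a linear form cutting out $H$ and $e_H$ is the order of the pointwise stabilizer of $H$; its right-hand side is homogeneous of degree $\sum_H e_H=\sum_H(e_H-1)+N=N^*+N=nh$, so $\Delta(W,\bm f)$ is quasi-homogeneous of weighted degree $nh$ once $f_i$ is given weight $d_i$. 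Thus every monomial $f_1^{a_1}\cdots f_n^{a_n}$ of $\Delta(W,\bm f)$ satisfies $\sum_i a_id_i=nh$, and the whole argument is an analysis of this single relation.

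For the forward implication, assume $d_n=h$. The monomial relation gives $a_nh\le\sum_i a_id_i=nh$, so $a_n\le n$, with equality only if $a_1=\dots=a_{n-1}=0$; hence the part of $\Delta(W,\bm f)$ of degree $n$ in $f_n$ is a single term $c_0 f_n^n$, $c_0\in\CC$, and specializing $f_1=\dots=f_{n-1}=0$ leaves exactly $\Delta(W,\bm f)(0,\dots,0,f_n)=c_0 f_n^n$. The crux is $c_0\neq0$, which I would certify by a regular point: taking a Coxeter element $c$ with $\zeta$-regular eigenvector $v$ ($cv=\zeta v$, $\zeta=e^{2\pi i/h}$), $W$-invariance of $f_i$ forces $\zeta^{d_i}f_i(v)=f_i(v)$, so $f_i(v)=0$ for $i<n$ (as $0<d_i<d_n=h$, the top degree of an irreducible group having multiplicity one) while $f_n(v)\neq0$ (else $\rho(v)=0$ and $v=0$); since $v$ is regular, $\Delta(W,\bm f)\bigl(\rho(v)\bigr)\neq0$, and this value is $c_0 f_n(v)^n$, forcing $c_0\neq0$. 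After rescaling the defining polynomial, $\Delta(W,\bm f)$ is monic of degree $n$ in $f_n$. For the normal form \eqref{Eq:Discriminant monic}, the coefficient of $f_n^{n-i}$ lies in $\CC[f_1,\dots,f_{n-1}]$ and, again by the monomial relation, is quasi-homogeneous of weighted degree $nh-(n-i)h=hi$; in particular $\alpha_1$ (the coefficient of $f_n^{n-1}$) is weighted-homogeneous of degree $h$ in $f_1,\dots,f_{n-1}$, so the Tschirnhaus substitution replacing $f_n$ by the homogeneous degree-$h$ basic invariant $f_n-\tfrac1n\alpha_1$ kills the $f_n^{n-1}$ term, and relabelling yields \eqref{Eq:Discriminant monic} with $\deg\alpha_i=hi$.

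Conversely, suppose $\Delta(W,\bm f)$ is monic of degree $n$ in $f_n$ for some system $\bm f$. Its leading term $f_n^n$ is then a single monomial, of weighted degree $nd_n$, which must match the total weighted degree $nh$ of $\Delta(W,\bm f)$; hence $d_n=h$, and since ``the highest degree equals the generalized Coxeter number'' is one of the equivalent forms of well-generatedness \cite[Prop.~4.2]{bes-zariski}, $W$ is well-generated. The nontrivial content all sits in the imported facts — the existence of a regular element of order $h$, needed to produce the certifying point $v$, and the characterization $d_n=h\Leftrightarrow W$ well-generated — and I expect the regular-element input to be the genuine obstacle, the rest being the bookkeeping above.
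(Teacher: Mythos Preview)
The paper does not give its own proof of this proposition; it is stated with citations to Saito (real case) and Bessis \cite[Thm.~2.4]{bes-kpi1} (general case), and no argument appears in the text. So there is nothing to compare against, and your proposal stands on its own.

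Your argument is essentially correct and is in fact close in spirit to how this is proved in the literature: reduce everything to the single weighted-degree identity $\sum_i a_id_i=nh$, and use a $\zeta$-regular vector to certify $c_0\neq 0$. Two points deserve a word of care. First, the step ``$f_i(v)=0$ for $i<n$'' requires $d_i<h$ for $i<n$, i.e.\ that the top degree $d_n=h$ occurs with multiplicity one among the $d_i$. You assert this parenthetically; it is true for every irreducible complex reflection group, but you should either cite it (it follows from Springer's theory: the $\zeta$-eigenspace of a $\zeta$-regular element has dimension $\#\{i:h\mid d_i\}$, and for irreducible $W$ one checks this is $1$) or flag it as an imported fact of the same standing as the other inputs you list. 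Second, the statement asserts monicity ``for any system of basic invariants'', while the normal form \eqref{Eq:Discriminant monic} with $\alpha_1=0$ requires the Tschirnhaus change you perform; your write-up is consistent with this, but it is worth making explicit that the first claim holds for all systems and the second only after adjusting $f_n$.

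The converse you give (monic of degree $n$ in $f_n$ forces $nd_n=nh$, hence $d_n=h$) is clean and correct, and indeed slightly stronger than needed since you only use one system.
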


\begin{remark}
The significance of the short exact sequence~\eqref{EQ:1PBW1} is  that it realizes the complex reflection group $W$ as the group of deck transformations of a particularly nice covering map $\rho:V^{\op{reg}}\rightarrow W\setminus V^{\op{reg}}$. Indeed, the covering $\rho$ is given by explicitly known polynomials (eq.~\ref{Eq: quotient map rho}) and its base space and cover space are complements of easily computable varieties.
\end{remark}

\subsection{The parabolic stratification of \texorpdfstring{$\mathcal{H}$}{H}}
\label{Section: The parabolic stratification of H}

We denote by $\mathcal{L}_W$ the intersection lattice of the reflection hyperplanes of $W$. Its elements $Z\in\mathcal{L}_W$ are called flats and they give a stratification of the ambient space $V$. We will use the symbol $\reg{Z}$ to indicate the {\it regular} part of a flat $Z$, that is, $\reg{Z}:=Z\setminus \bigcup_{H\not\supset Z}H$. The pointwise stabilizer of $Z$ is denoted by $W_Z$ and is itself a reflection group (after Steinberg's theorem). We call such $W_Z$ {\it \color{blue} parabolic subgroups} and we call their Coxeter elements {\it \color{blue} parabolic Coxeter elements}.

The quotient map $\rho:V\rightarrow W\backslash V$ induces the orbit stratification of the discriminant hypersurface $\mathcal{H}$, the strata of which are the $W$-orbits of the flats $Z$. We denote them by $\orb{Z}\in W\backslash \mathcal{L}_W$. As before, we will use the symbol $\orb{\reg{Z}}$ for the {\it regular} part of $\orb{Z}$.

The local topology of the reflection arrangement is very well understood (see \cites[Prop.~2.29]{broue-malle-rouquier-braid-hecke}[Prop.~20]{theo-thesis}). Around a point $p\in\reg{Z}$, the reflection arrangement $\mathcal{A}_W$ looks like the direct product of the flat $Z$ and the arrangement $\mathcal{A}_{W_Z}$ of the parabolic subgroup $W_Z$. This is reflected in the hypersurface $\mathcal{H}$ which, near a point $\orb{p}\in\orb{\reg{Z}}$, looks like the product of $Z$ and the discriminant hypersurface $\mathcal{H}(W_Z)$. This local behavior induces an embedding of the corresponding braid groups $B(W_Z)\hookrightarrow B(W)$, which is well defined up to conjugation.

Another consequence of this is the following lemma which relates the local {\it geometry} of the discriminant $\mathcal{H}$ with the combinatorics of the hyperplane arrangement. The multiplicity of a scheme at a point is a numerical invariant that records in some sense how much the scheme fails to be smooth at that point. For a hypersurface in $\CC^n$, given as the zero set of a polynomial $F({\bm x})$, the multiplicity at the origin is the smallest degree of the monomials of $F$ (see \cite[Geometric Interludes, No.~1]{theo-thesis} for more details). For instance, equation \eqref{Eq:Discriminant monic} implies that in an irreducible, well-generated, complex reflection group, the multiplicity of $\mathcal{H}$ at the origin ${\bm 0}$ is equal to $n$, the degree of the monomial $f_n^n$. This generalizes to the following lemma (see \cite[Lemma~5.4]{bes-kpi1} and for a longer exposition \cite[Lemma~34]{theo-thesis}).

\begin{lem}\label{Lemma: mult=codim}
Let $W$ be an irreducible, well-generated complex reflection group, $[p]$ a point in the discriminant hypersurface $\mathcal{H}$, and let $Z$ be a flat such that $[p]\in\orb{\reg{Z}}$. Then,$$\op{mult}_{[p]}(\mathcal{H})=\op{codim}(Z).$$
\end{lem}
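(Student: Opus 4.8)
The plan is to localize the problem around $\orb{p}$, replace $\mathcal{H}$ by the discriminant of the parabolic subgroup $W_Z$, and then read the multiplicity off Proposition~\ref{Prop: Discriminant is monic}.

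First I would invoke the local product description recalled just before the statement. Picking a representative $p\in\reg{Z}$, its $W$-stabilizer is exactly $W_Z$ by Steinberg's theorem, so the slice theorem for the finite group $W$ at $p$ identifies an analytic neighborhood of $\orb{p}$ in $W\backslash V$ with the product of a disk in $Z$ and a neighborhood of ${\bm 0}$ in $W_Z\backslash Z'$, where $Z'$ is a $W_Z$-stable complement of $Z$; under this isomorphism $\mathcal{H}$ corresponds to $Z\times\mathcal{H}(W_Z)$. Since the multiplicity of a hypersurface at a point equals the order of vanishing of a local equation, and since a local equation of $Z\times\mathcal{H}(W_Z)$ at $(p,{\bm 0})$ is obtained from one of $\mathcal{H}(W_Z)$ at ${\bm 0}$ just by adjoining (smooth) coordinates on $Z$ — which does not change its lowest-degree part — we get $\op{mult}_{\orb{p}}(\mathcal{H})=\op{mult}_{\bm 0}\big(\mathcal{H}(W_Z)\big)$. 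Decomposing $W_Z$ into irreducible reflection groups $W_1,\dots,W_k$ acting on complementary summands of $Z'$, the discriminant factors as $\Delta(W_Z)=\prod_i\Delta(W_i)$ in disjoint sets of variables, so $\op{mult}_{\bm 0}(\mathcal{H}(W_Z))=\sum_i\op{mult}_{\bm 0}(\mathcal{H}(W_i))$, while $\op{codim}(Z)=\sum_i\op{rank}(W_i)$. It therefore suffices to show $\op{mult}_{\bm 0}(\mathcal{H}(W_i))=\op{rank}(W_i)$ for each irreducible factor.

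Next I would use that a parabolic subgroup of a well-generated complex reflection group is again well-generated, so each $W_i$ is well-generated and Proposition~\ref{Prop: Discriminant is monic} applies: with $n_i=\op{rank}(W_i)$ and $h_i$ its largest degree, $\Delta(W_i,{\bm g})=g_{n_i}^{\,n_i}+\beta_2\,g_{n_i}^{\,n_i-2}+\cdots+\beta_{n_i}$, where $\beta_j$ is quasi-homogeneous of weighted degree $h_i j$ in $g_1,\dots,g_{n_i-1}$. Viewing the ${\bm g}$ as ordinary coordinates on the quotient space of $W_i$, the term $g_{n_i}^{\,n_i}$ has ordinary degree $n_i$, while any monomial $\prod_\ell g_\ell^{a_\ell}$ of $\beta_j$ satisfies $\sum_\ell a_\ell\deg(g_\ell)=h_i j$ with each $\deg(g_\ell)\le h_i$, forcing $\sum_\ell a_\ell\ge j$ and hence giving $\prod_\ell g_\ell^{a_\ell}\,g_{n_i}^{\,n_i-j}$ ordinary degree $\ge n_i$. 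So the lowest-degree part of $\Delta(W_i,{\bm g})$ is $g_{n_i}^{\,n_i}$ and $\op{mult}_{\bm 0}(\mathcal{H}(W_i))=n_i$. Summing over $i$ and combining with the previous step yields $\op{mult}_{\orb{p}}(\mathcal{H})=\op{codim}(Z)$.

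The two genuine inputs are the ones used in the first two paragraphs: that the slice description is an isomorphism of analytic germs of the pair $\big(W\backslash V,\mathcal{H}\big)$, and not just a topological statement; and that every irreducible constituent of the parabolic $W_Z$ is itself well-generated (so that \eqref{Eq:Discriminant monic} is available downstairs). Granted these, the rest is elementary degree bookkeeping. A tempting alternative that sidesteps $W_Z$ would be to cut $\mathcal{H}$ with a generic line through $\orb{p}$ in the $f_n$-direction and exploit the monicity of $\Delta(W,{\bm f})$ in $f_n$, but verifying that this particular direction is sufficiently generic at a point of $\orb{\reg{Z}}$ seems more delicate than the slice route, so I would not pursue it.
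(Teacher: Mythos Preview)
Your argument is correct and follows exactly the route the paper indicates: the paragraph preceding the lemma explains that near $\orb{p}\in\orb{\reg{Z}}$ the pair $(\orbsp,\mathcal{H})$ is locally a product $Z\times\mathcal{H}(W_Z)$, and the sentence just before the statement observes from \eqref{Eq:Discriminant monic} that the multiplicity of a well-generated discriminant at the origin is the rank; the paper then simply cites \cite[Lemma~5.4]{bes-kpi1} and \cite[Lemma~34]{theo-thesis} rather than spelling out the details you provide. Your identification of the two substantive inputs (the analytic, not merely topological, local product; and the fact that parabolics of well-generated groups are again well-generated) is exactly right, and your closing remark about the $f_n$-direction alternative is apt --- that is precisely the transversality issue the paper addresses separately in \S\ref{Section:Transversality of the slice}.
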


\section{Geometric factorizations of the Coxeter element}
\label{Section: Geometric facto's}

We briefly review in this section Bessis' geometric-topological construction of factorizations of the Coxeter element. The reader may also consult \cites[Sec.~3.2]{rip-submax}[Sec.~6]{bes-kpi1}[Sec.~4]{theo-thesis}.

As we mentioned earlier, GIT implies a realization of the quotient $W\setminus V$ as the affine $n$-dimensional space $\CC^n$ (eq.~\ref{Eq: quotient map rho}). The derivation along the last coordinate, which corresponds to the highest degree invariant $f_n$, is of particular importance for well-generated groups (compare with Saito's primitive form \cite[\S~1.6]{saito-polyhedra-dual-a-precis}). For our purposes, we need to construct loops in $B(W)$ that surround the discriminant $\mathcal{H}$ only along the direction of $f_n$.

Towards that end, we introduce the {\color{blue} base space} $Y=\op{Spec}\CC[f_1,\cdots,f_{n-1}]$, so that $W\setminus V\cong Y\times \CC$, with coordinates $(y,x)$ or sometimes $(y,f_n)$. The slice $L_0:={\bm 0}\times\CC$, given by $f_i=0$, $i=1,\cdots, n-1$ and $f_n$ arbitrary, intersects the discriminant $\mathcal{H}$ solely at the origin $({\bm 0},0)$ (by eq.~\ref{Eq:Discriminant monic}). The point $({\bm 0},1)$ lies therefore in $W\setminus V^{\op{reg}}$; we pick some $v$ in its preimage in $V^{\op{reg}}$ and set it, now and for all, as the basepoint of our covering map $\rho$ (eq.~\ref{EQ:1PBW1}). 

Consider now (see Fig.~\ref{Fig:label map}) the loop in $L_0$ given by $f_n(t)=e^{2\pi i t},\ t\in [0,1]$. It defines an element $\delta\in B(W)$, whose Galois action sends $v$ to some point $v'=\delta_*(v)$. It is easy to see (by the homogeneity of the $f_i$'s) that the lift of $\delta$ is the path $e^{2\pi i t/h}\cdot v$, $t\in [0,1]$; it traces a rotation of $v$ by $2\pi/h$ radians. That is, $v$ is a $e^{2\pi i/h}$-eigenvector of the element $\bm\pi(\delta)\in W$ (as in eq.~\ref{EQ:1PBW1}). We conclude that:
\begin{prop}\cite[Lem.~6.13]{bes-kpi1}
The element $c:=\bm\pi(\delta)$ is a Coxeter element according to Defn.~\ref{Defn:Cox elt}. Different choices of $v$ will give the whole conjugacy class of Coxeter elements.
\end{prop}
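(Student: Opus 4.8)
The plan is to make the lift of the loop $\delta$ through $\rho$ completely explicit, read off $\pi(\delta)$ from it, and then match the outcome against Springer's notion of a regular element. First I would check that $\gamma\colon[0,1]\to V$, $\gamma(t):=e^{2\pi i t/h}\cdot v$, is the lift of $\delta$ starting at $v$. Since the reflection arrangement is a cone, $V^{\op{reg}}$ is stable under scalar multiplication, so $\gamma$ stays inside $V^{\op{reg}}$ and it suffices to verify $\rho\circ\gamma=\delta$. Each $f_i$ is homogeneous of degree $d_i$, hence $f_i(\gamma(t))=e^{2\pi i t d_i/h}f_i(v)$; by the choice of basepoint $v$ lies above $({\bm 0},1)$, i.e. $f_1(v)=\cdots=f_{n-1}(v)=0$ and $f_n(v)=1$, and since $W$ is well-generated we have $d_n=h$ (Prop.~\ref{Prop: Discriminant is monic}). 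Therefore $\rho(\gamma(t))=({\bm 0},e^{2\pi i t})$, which is exactly the loop $\delta$. As $\gamma(0)=v$ and $\rho$ restricts on $V^{\op{reg}}$ to the covering map of~\eqref{EQ:1PBW1}, uniqueness of path lifting identifies $\gamma$ with the lift of $\delta$.

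It follows that $\delta_*(v)=\gamma(1)=\zeta v$ with $\zeta=e^{2\pi i/h}$ as in Defn.~\ref{Defn:Cox elt}. By the definition of the projection $\pi$, the element $c:=\pi(\delta)$ is the unique $w\in W$ with $w\cdot v=\delta_*(v)$, so $c\cdot v=\zeta v$; that is, $v$ is a $\zeta$-eigenvector of $c$, and $v$ lies in no reflection hyperplane because $v\in V^{\op{reg}}$. This says precisely that $c$ is a $\zeta$-regular element in the sense of Springer, i.e. a Coxeter element of $W$.

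For the final assertion, note that $\rho^{-1}(({\bm 0},1))$ is a single free $W$-orbit (the point is regular), so every admissible basepoint is of the form $w\cdot v$ for a unique $w\in W$. Since the $f_i$ are $W$-invariant, the computation above applies verbatim with $v$ replaced by $wv$: the lift of $\delta$ from $wv$ is $t\mapsto e^{2\pi i t/h}wv$, ending at $\zeta w v=w(\zeta v)=w(c\cdot v)=(wcw^{-1})\cdot(wv)$, so the Coxeter element attached to the basepoint $wv$ is $wcw^{-1}$. As $w$ ranges over $W$ this produces exactly the conjugacy class of $c$, which by Defn.~\ref{Defn:Cox elt} is the whole set of Coxeter elements. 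I do not expect a serious obstacle: the only delicate points are keeping track of how $\pi$ depends on the chosen basepoint, and the observation that every ingredient used — the slice $L_0$ meeting $\mathcal{H}$ only at the origin, and the equality $d_n=h$ — rests on the well-generated hypothesis of Prop.~\ref{Prop: Discriminant is monic}.
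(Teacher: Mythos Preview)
Your argument is correct and follows exactly the route sketched in the paper: identify the lift of $\delta$ as $t\mapsto e^{2\pi i t/h}v$ via homogeneity of the $f_i$'s and the equality $d_n=h$, read off that $v$ is a $\zeta$-eigenvector of $\pi(\delta)$, and then conjugate by varying the basepoint in the free $W$-orbit over $({\bm 0},1)$. You have simply made explicit the details the paper leaves to the reader.
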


\subsection{The labeling map \texorpdfstring{$\op{rlbl}$}{rlbl}}
\label{Section: The labeling map rlbl}

According to the previous discussion, the special slice $L_0$, which does not depend on the choice of fundamental invariants ${\bm f}$ (see \cite[Rem.~25]{theo-thesis}), gives rise to a {\it geometric construction} of the Coxeter element. We now consider an arbitrary slice $L_y$ to produce {\it factorizations} of $c$. The intersection $L_y\bigcap \mathcal{H}$ will now be comprised of $n$ points (counted with multiplicity), namely the solutions of the equation
\begin{equation}\label{Eq: Discrim. eqtn}
\big(\Delta(W,{\bm f});(y,t)\big):= t^n+\alpha_2(y)t^{n-2}+\cdots+\alpha_n(y)=0.
\end{equation}
Here $y\in Y$ is fixed and $t$ is the unknown, while the $\alpha_i$'s are as in \eqref{Eq:Discriminant monic}. We write $(y,x_i)$ for the solutions.

Bessis \cite[\S~6 and Defn.~7.14]{bes-kpi1} describes a way of drawing loops around these points $(y,x_i)$ and shows that, via the fixed surjection $\bm\pi$, they map to factorizations of the Coxeter element. This process of assigning factorizations to points $y$ of the base space will be called a {\color{blue}labeling map}. The construction requires an ordering of the complex numbers $x_i$. For us, this will be the {\it complex-lexicographic} one; that is, we order the points $x_i\in \CC$ by increasing real part first, and break ties by increasing imaginary part.

As all our loops should eventually be based at $({\bm 0},1)$, we start by picking a path $\theta$ in $Y$ that connects ${\bm 0}$ to $y$. We lift it to a path $\beta_{\theta}$ in $Y\times\CC$, starting at $({\bm 0},1)$, which always stays ``above'' (i.e. has bigger imaginary part than) all the points in the intersections $L_{y'}\bigcap\mathcal{H}$ (see Fig.~\ref{Fig:label map}). We call the endpoint of this path $(y,x_{\infty})$ to indicate that it lies in the slice $L_y$ and above all points $(y,x_i)$.

\begin{wrapfigure}{r}{0.5\textwidth}
  \begin{center}\vspace{-1.1cm}
	\includegraphics[trim={12cm 2.5cm 0 1.5cm},clip,width=0.64\textwidth]{label-map.png}
  \end{center}\vspace{-0.75cm}
\caption{Geometric factorizations via the slice $L_y$.}\label{Fig:label map}\vspace{-0.3cm}
\end{wrapfigure}

From $x_{\infty}$ we now construct paths $\beta_i$ in $L_y$ down to the points $x_i$ such that they never cross each other (or themselves), and their order as they leave $x_{\infty}$ is given by their indices (i.e. $\beta_1$ is the leftmost one).

Given this information, we can now easily construct elements $b_{(y,x_i)}$ of $B(W)$: First, we follow the path $\beta_{\theta}$ from the basepoint $({\bm 0},1)$ to $(y,x_{\infty})$, then we go down $\beta_i$ but before we reach its end, we trace a small counterclockwise circle around $x_i$, and finally we return by the same route (see Fig.~\ref{Fig:label map}). 

The product $\delta_y=b_{(y,x_1)}\cdots b_{(y,x_k)}$  of these elements (where $k$ is the number of {\it distinct} points $x_i$) is a loop that completely surrounds $\mathcal{H}$ over the point $y\in Y$. It is easily seen to be homotopic to $\delta$ (a consequence of the monicity of the discriminant \eqref{Eq:Discriminant monic}), and hence its product structure defines a factorization of $c$ via the fixed surjection $\bm\pi$:
\begin{defn}\cite[Defn.~6.9 and 7.14]{bes-kpi1}\label{Defn:rlbl_def}
There is a labeling map $\op{rlbl}$, which to each point $y\in Y$ assigns a factorization $c=c_1\cdots c_k$ of the Coxeter element, where the factors $c_i:=\bm\pi(b_{(y,x_i)})$ are defined as above. We write $\op{rlbl}(y)=(c_1,\cdots,c_k)$.
\end{defn}

As is implicit in the previous statement, the labeling map does not depend on our choice of the path $\theta$ in $Y$. In fact, all resulting paths $\beta_{\theta}$ will be homotopic, according to Bessis' fat basepoint trick \cite[Appendix~A]{bes-kpi1}.

\section{The LL map and the trivialization theorem}
\label{Section: The LL map and the trivialization theorem}

In the previous section, we described a way to produce factorizations of the Coxeter element by intersecting the discriminant hypersurface $\mathcal{H}$ with the slices $L_y$. The most striking fact of this theory though, is that this geometric construction is sufficient to produce {\it all} block factorizations of the Coxeter element $c$. In fact, if we additionally keep track of the intersection $L_y\bigcap \mathcal{H}$, each such factorization is attained exactly once (Section~\ref{Section: The trivialization theorem}).

The geometric object that keeps track of the point configurations $L_y\bigcap \mathcal{H}$ for the various $y\in Y$ is the Lyashko-Looijenga morphism. Its natural target $E_n$ is the set of {\color{blue} centered configurations} of $n$ unordered, not necessarily distinct\footnote{This would be a subset of what topologists usually call the $n^{\op{th}}$ {\it symmetric product} of $\CC$, as usually $\op{Conf}_n(X)$ is meant to assume that points in the configuration are distinct.} points in $\CC$, i.e.,
\begin{equation}\label{Eq: Defn E_n}E_n:=\sym_n\backslash H_0,\text{ where } H_0=\big\{ (x_1,\cdots,x_n)\in \CC^n\ |\ \sum_{i=1}^n x_i=0\big\} \cong \CC^{n-1}.\end{equation}
\noindent The {\it centered} condition is due to the coefficient of $t^{n-1}$ being equal to $0$ in \eqref{Eq: Discrim. eqtn}. We write $\reg{E_n}$ for those centered configurations where the points $x_i$ are distinct.

\begin{defn}\cite[Defn.~5.1]{bes-kpi1}\label{Defn: LL map}
For an irreducible well-generated complex reflection group $W$, we define the {\it \color{blue} Lyashko-Looijenga} map by:
\begin{alignat*}{3}
& \ Y &&\xrightarrow{\quad LL \quad}\ \ & &E_n\\
 y=(f_1,&\cdots,f_{n-1}) &&\xrightarrow{\quad \quad\quad}\ \ & \text{ multiset of ro}&\text{ots of } \big( \Delta(W,{\bm f});(y,t)\big)=0
\end{alignat*}
and denote it by $LL$. Notice that there is a simple description of $LL$ as an algebraic morphism. Indeed, the (multiset of) roots of a polynomial is completely determined by its coefficients, therefore we can express $LL$ as the map:
\begin{alignat*}{3}
Y&\cong \CC^{n-1} &&\xrightarrow{\quad LL\quad } &\quad E_n &\cong\CC^{n-1}\\
y=(f_1,&\cdots,f_{n-1}) &&\xrightarrow{\quad\quad\quad} &\quad \big( \alpha_2(f_1,\cdots,f_{n-1}),&\cdots,\alpha_n(f_1,\cdots,f_{n-1})\big)
\end{alignat*}
where the $\alpha_i$'s are as in \eqref{Eq: Discrim. eqtn} and \eqref{Eq:Discriminant monic}; in particular, $LL$ is a quasi-homogeneous. 

\noindent We will write $LL(y)=\{x_1,\cdots,x_k\}$ to indicate that the natural target of $LL$ is an unordered configuration space, but we will always index the $x_i$'s with complex lexicographic order, so as to be compatible with the $\op{rlbl}$ map. As the notation suppresses the multiset data, we define $\op{mult}_{x_i}\big(LL(y)\big)$ to be the multiplicity of $x_i$ in the multiset $LL(y)$.
\end{defn}

\subsection{Geometry of the \texorpdfstring{$LL$}{LL} map}
\label{sec: geom of LL}

There is a deep and beautiful connection between geometric properties of the $LL$ map and the combinatorics of block factorizations of $c$, and our further work strongly relies on it. As a full presentation of the theory is unrealistic here (but we do give a more detailed description of its connection with the classical $\mathcal{LL}$ map on polynomials, in Section~\ref{sec:LL_on_polys}), we will instead try to give a {\it meaningful summary} of the arguments and techniques that appear. 

The bulk of the results in this section appear also in \cite{bes-kpi1} (and we provide the reference when available). However, in the proofs of some of them there are non-trivial gaps (in particular, in the proof of Prop.~\ref{Prop:labels are parabolic cox elts}, see also \cite[Prop.~39, Rem.~40, Corol.~66]{theo-thesis}) and in one important case, the proof of the finiteness property for the $LL$ map relies on a faulty geometric argument, see Lemma~\ref{Lem: LL is finite}. We show in our presentation how to clarify or fix these points and in addition, we hope to elucidate---at least justify---the various lemmas and propositions from \cite{bes-kpi1} that we will use in the rest of the paper. We refer the reader to \cite[\S5-7]{theo-thesis} for a much more detailed presentation.

\subsubsection{Transversality of the slice \texorpdfstring{$L_y$}{Ly}}
\label{Section:Transversality of the slice}

Certain features of the labeling construction are particularly relevant to the geometry of the LL map. Chief of those is the {\color{blue} transversality of the slice} $L_y$ on the discriminant hypersurface $\mathcal{H}$.
This means that $L_y$ is never part of the tangent cone (see \cite[Geometric Interlude No.~1]{theo-thesis}) of $\mathcal{H}$ at any point $(y,x)$; equivalently, it means that for all points $(y,x_i)\in L_y\cap\mathcal{H}$, the multiplicity of $x_i$ as a root of the equation  $\big(\Delta(W,\bm f); f_n\big)|_y=0$ is equal to the multiplicity $\op{mult}_{(y,x_i)}(\mathcal{H})$ of $(y,x_i)$ as a point of the discriminant $\mathcal{H}$.

\begin{lem}
The slice $L_y$ is transverse to the discriminant hypersurface $\mathcal{H}$ for all points $y\in Y$.    
\end{lem}
\begin{proof}
Indeed, let's assume on the contrary that there is some point $y\in Y$ and some  $x_i\in \mathbb{C}$ such that $(y,x_i)\in L_y\cap\mathcal{H}$ and for which the multiplicity of $x_i$ as a root of the discriminant at $y$ is greater than $\op{mult}_{(y,x_i)}(\mathcal{H})$ (it can never be smaller). Consider now the label $\op{rlbl}(y)=(c_1,\ldots,c_k)$ and let $c_i=\op{rlbl}(y,x_i)$. Now, write $[X^{\op{reg}}]$ for the parabolic stratum of the discriminant hypersurface $\mathcal{H}$ that contains the point $(y,x_i)$. We know from Lemma~\ref{Lemma: mult=codim} that $\op{mult}_{(y,x_i)}(\mathcal{H})=\op{codim}(X)$. We also know by the local embedding of braid groups $B(W_Z)\hookrightarrow B(W)$ from Section~\ref{Section: The parabolic stratification of H} that $c_i$ must belong to some parabolic subgroup $W_{X'}$ with $X'\in[X]$; in particular, $\op{codim}(V^{c_i})\leq \op{codim}(X')$.

Now, the total multiplicities of the roots of the equation $\big(\Delta(W,\bm f); f_n\big)|_y=0$ must be $n$, and our assumption on $x_i$ implies the strict first inequality:
\[
n>\sum_{x_j\in L_y\cap\mathcal{H}}\op{mult}_{(y,x_j)}(\mathcal{H})\geq \sum_{j=1}^k\op{codim}(V^{c_j}),
\]
where the second weak inequality is again an application of the local embedding of braid groups and Lemma~\ref{Lemma: mult=codim} as above. This means in particular that the product $c=c_1\cdots c_k$ must have a non-trivial fixed space, since $\op{codim}(V^c)\leq\sum\op{codim}(V^{c_j})<n$.

This is impossible; the Springer theory of regular elements completely determines the eigenvalues of a Coxeter element, and none can be equal to $1$ (see \cite[Lem.~7.2]{bes-kpi1}).
\end{proof}

A first consequence of the transversality of the slice is an explicit characterization of the labels $c_i=\bm\pi(b_{(y,x_i)})$ that are associated to the points $(y,x_i)$. Again, the embedding of braid groups $B(W_Z)\hookrightarrow B(W)$ from Section~\ref{Section: The parabolic stratification of H} allows us to compare the part of the loop $b_{(y,x_i)}$ that surrounds the point $(y,x_i)$ with the special loop $\delta_Z\in B(W_Z)$ that corresponds to the Coxeter elements of $W_Z$.

Now, these are not identical and it is not {\it a priori} the case that they must be homotopic (but this was \emph{incorrectly} claimed in the proof \cite[Lem.~7.4]{bes-kpi1} of the following Lemma, see \cite[Rem.~40]{theo-thesis}). However, the transversality of the slice implies that both loops run sufficiently far from the tangent cone of $\mathcal{H}$ at $(y,x_i)$ and this fact is enough to construct a homotopy between them. The rest of $b_{(y,x_i)}$ still preserves the conjugacy class of $c_i=\bm\pi(b_{(y,x_i)})$ so then we have a corrected proof \cite[Prop.~39]{theo-thesis} of the following.

\begin{prop}\label{Prop:labels are parabolic cox elts}
For any point $(y,x_i)\in \mathcal{H}$, the label $c_i:=\op{rlbl}(y,x_i)$ is a parabolic Coxeter element. In fact, it is a Coxeter element of a parabolic subgroup $W_Z$ for which $(y,x_i)\in [Z^{\op{reg}}]$.
\end{prop}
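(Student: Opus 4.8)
The proof of Proposition~\ref{Prop:labels are parabolic cox elts} proceeds by localizing the whole labeling picture near the point $(y,x_i)\in\mathcal{H}$. The plan is to pick the flat $Z$ with $(y,x_i)\in\orb{\reg{Z}}$ and to use the product structure of $\mathcal{H}$ in a neighborhood of $\orb{p}$, namely that $(\mathcal{H},\orb{p})$ looks like $(Z\times\mathcal{H}(W_Z),\orb{p})$, as recalled in \S\ref{Section: The parabolic stratification of H}. Under this identification the loop $b_{(y,x_i)}$, once we strip off the connecting portion that travels along $\beta_\theta$ and $\beta_i$ and back, is a small loop based near $(y,x_i)$ that encircles the local branch of $\mathcal{H}$; the embedding $B(W_Z)\hookrightarrow B(W)$ (well defined up to conjugation, see \cites[Prop.~2.29]{broue-malle-rouquier-braid-hecke}[Prop.~20]{theo-thesis}) then lets me regard this encircling loop as an element of $B(W_Z)$. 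First I would make precise which element of $B(W_Z)$ it is.

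The key point is that this encircling loop, read inside $B(W_Z)$, is forced to be $\delta_Z$, the distinguished loop of \S\ref{Section: Geometric facto's} applied to the parabolic group $W_Z$, whose image under $\pi_{W_Z}$ is a Coxeter element of $W_Z$. This is exactly where the transversality of the slice $L_y$ enters: because $L_y$ is transverse to the tangent cone of $\mathcal{H}$ at $(y,x_i)$, the slice meets the local model $Z\times\mathcal{H}(W_Z)$ in a way that projects onto a generic line in the $\CC$-factor of $W_Z\setminus V_Z\cong Y_Z\times\CC$ — equivalently, $L_y$ plays the role of $L_0$ for the parabolic group. A loop encircling that local branch along the slice is therefore homotopic (the homotopy being buildable precisely because both loops stay away from the tangent cone) to $\delta_Z$. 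Next I would run this comparison carefully, noting that the portion of $b_{(y,x_i)}$ coming from $\beta_\theta$ and $\beta_i$ is a conjugating path and so only changes the resulting group element by conjugation, which preserves the property of being a parabolic Coxeter element. This yields that $c_i=\pi(b_{(y,x_i)})$ is conjugate (in $W_Z$, hence in $W$) to $\pi_{W_Z}(\delta_Z)$, a Coxeter element of $W_Z$; and since $c_i\in W_Z$ fixes $Z$ pointwise while $(y,x_i)\in\orb{\reg{Z}}$ forces $Z$ to be the actual fixed space of the local branch, we get the sharper statement $(y,x_i)\in\orb{\reg{Z}}$ with $W_Z$ the relevant parabolic.

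The main obstacle — and the subtlety flagged in \cite[Rem.~40]{theo-thesis} — is that the encircling part of $b_{(y,x_i)}$ and the model loop $\delta_Z$ are genuinely different loops, and it is \emph{not} automatic that they are homotopic inside $B(W_Z)$; one really has to produce the homotopy, and the only thing that makes this possible is that transversality keeps both loops in the region where $\mathcal{H}$ is well-approximated by its tangent cone, so the ambient-isotopy argument for the local branch goes through. I would devote the bulk of the proof to this homotopy, treating the identification of the local model and the behavior of the slice under it as the crux, and only then cite Springer theory (no eigenvalue of a Coxeter element of $W$ equals $1$) if needed to rule out degenerate possibilities; but for this particular statement the fixed-space bookkeeping is a direct consequence of $(y,x_i)\in\orb{\reg{Z}}$ and does not require it.
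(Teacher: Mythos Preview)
Your proposal is correct and follows essentially the same approach as the paper's sketch in \S\ref{Section:Transversality of the slice}: localize via the product structure $(\mathcal{H},\orb{p})\simeq(Z\times\mathcal{H}(W_Z),\orb{p})$, use the embedding $B(W_Z)\hookrightarrow B(W)$ to compare the encircling portion of $b_{(y,x_i)}$ with $\delta_Z$, invoke transversality to keep both loops away from the tangent cone and build the homotopy, and observe that the remaining path only conjugates. You have also correctly identified the subtlety flagged in \cite[Rem.~40]{theo-thesis} as the crux.
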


\subsubsection{Finiteness of the \texorpdfstring{$LL$}{LL} map and quasi-homogeneity}
\label{Section: Finiteness of the LL map and quasi-homogeneity}

The most important application of the transversality property is that $LL$ is a {\color{blue} finite morphism} (i.e. it induces a finite extension of algebras). This is a loaded algebro-geometric concept which, in our case, simplifies the degree calculation, implies flatness (in particular, the fibers of $LL$ are equi-dimensional), and guarantees various topological properties (such as Corol.~\ref{Corol: Path lifting property of LL}).

There is a known (see the exposition in \cite[Prop.~46]{theo-thesis}) criterion for a quasi-homogeneous morphism $f:\CC^n\rightarrow \CC^n$ to be finite; it has to satisfy $f^{-1}({\bm 0})={\bm 0}$. In our setting, $LL(y)={\bm 0}$ implies that the slice $L_y$ must intersect the discriminant $\mathcal{H}$ at the {\it single} point $(y,0)$, and hence with multiplicity $n$. Since the intersection is transverse, $(y,0)$ must also be of multiplicity $n$ in $\mathcal{H}$. The only such point however is the origin ${\bm 0}$ (again a consequence of the parabolic stratification (see Section~\ref{Section: The parabolic stratification of H})). This completes a proof (see also \cite[Thm.~51]{theo-thesis} for more details) of the following Lemma, correcting its original version in \cite[Thm.~5.3]{bes-kpi1}. 

\begin{lem}\label{Lem: LL is finite}
The $LL$ map is a finite morphism.
\end{lem}

The mistake in \cite{bes-kpi1} lies in the proof of the transversality property of the slize $L_y$. It is claimed (see \cite[Lem.~5.6]{bes-kpi1}) that the transversality is a corollary of the fact that the tangent cone of a hypersurface (in this case $\mathcal{H}$) is a closed subvariety of the tangent bundle of the ambient space. This last statement is always true for the \emph{Whitney cone $C_4$} of a hypersurface, see \cite[\S~3]{whitney}, but not always true for its \emph{tangent cone}; the simplest counter-example is Whitney's umbrella $y^2-zx^2$, see also \cite[Rem.~38]{theo-thesis}. Our discussion in the previous Section~\ref{Section:Transversality of the slice} is necessary to circumvent this.

Now, a finite morphism $f:\CC^n\rightarrow \CC^n$ has finite fibers and the size of the generic fiber of $f$ is an important invariant, called the {\color{blue} degree} of $f$. Because $LL$ is quasi-homogeneous, Bezout's theorem  \cite[Thm.~5.1.5]{LZ-graphs-on-surfaces} allows us to easily calculate its degree via the weights (first equality below). Using the algebraic expression for $LL$ in Defn.~\ref{Defn: LL map}, we have:
\begin{equation}
\op{deg}(LL)\ =\ \dfrac{\prod_{i=2}^n\op{deg}(\alpha_i)}{\prod_{i=1}^{n-1}\op{deg}(f_i)}\ =\ \frac{\prod_{i=2}^nih}{\prod_{i=1}^{n-1}d_i}\ =\ \frac{h^{n-1}n!}{\frac{|W|}{h}}=\frac{h^nn!}{|W|}.\label{EQ: degree of LL via Bexout}
\end{equation}

Now, the quasi-homogeneity property again implies that some abstract algebro-geometric characteristics of finite morphisms have simple topological interpretations for $LL$ (see \cite[Rem.~50]{theo-thesis} and the preceding discussion). In particular, $LL$ is open and {\it proper} (the preimages of compact sets are compact). This second topological property has the following path lifting application (Corol.~\ref{Corol: Path lifting property of LL}); it allows us to justify various intuitive arguments that involve perturbing the resulting configuration $LL(y)$ (as in Prop.~\ref{Prop:codim=l_R=mult=mult} and all of Section~\ref{Section: The trivialization theorem}).

\begin{cor}\cite[Corol.~53]{theo-thesis}\label{Corol: Path lifting property of LL}
Any path (continuous movement) in the centered configuration space $E_n$ can be lifted to a (not necessarily unique) path in $Y$. In particular, $LL$ is surjective.
\end{cor}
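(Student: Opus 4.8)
I would split the two assertions. Surjectivity is the soft part and follows formally from finiteness: a finite morphism is closed, so $LL(Y)$ is a closed irreducible subvariety of $E_n$, and since finite (indeed any dominant quasi-finite) morphisms preserve dimension, $\dim LL(Y)=\dim Y=n-1=\dim E_n$; a closed irreducible subvariety of full dimension in the irreducible variety $E_n$ must equal $E_n$. Alternatively, $\op{deg}(LL)=h^nn!/|W|\ge 1$ already forces $LL$ to be dominant, and a dominant closed map is onto. In particular the initial point $\gamma(0)$ of any path $\gamma\colon[0,1]\to E_n$ that we wish to lift has a nonempty $LL$-fiber, which gives us somewhere to start.

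For the lifting statement itself the plan is a continuation argument. Fix a path $\gamma\colon[0,1]\to E_n$ and a point $y_0\in LL^{-1}(\gamma(0))$, and let $\mathcal T\subseteq[0,1]$ be the set of $t$ for which $\gamma|_{[0,t]}$ admits a continuous lift starting at $y_0$. Then $\mathcal T$ is nonempty and downward closed, so it is enough to prove it is both closed and open in $[0,1]$; this gives $\mathcal T=[0,1]$, hence a lift of $\gamma$. For closedness I would use properness of $LL$ (which we already know, $LL$ being finite): if $\gamma|_{[0,t]}$ lifts for all $t<s$, take a maximal lift $\sigma$ on $[0,s)$; its image lies in the compact set $LL^{-1}(\gamma([0,s]))$, so $\sigma(t_k)\to q$ along some sequence $t_k\uparrow s$ with $LL(q)=\gamma(s)$. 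Since the fibers of $LL$ are finite, $q$ is isolated in $LL^{-1}(LL(q))$; choosing a relatively compact connected neighbourhood $N$ of $q$ meeting that fiber only in $q$, and using that $LL$ is closed (so $LL(\partial N)$ is compact and avoids $LL(q)$), one shows by the usual connectedness argument first that $\sigma(t)\in N$ for $t$ near $s$ and then that $\sigma(t)\to q$ as $t\to s^-$; setting $\sigma(s)=q$ extends the lift.

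The openness of $\mathcal T$ is the heart of the matter and the step I expect to be the main obstacle. Given a lift over $[0,t]$ ending at $p:=\sigma(t)$, one has to continue $\gamma$ a little past $t$. After localizing --- pick neighbourhoods $U'\ni p$ and $V\ni LL(p)$ with $LL\colon U'\to V$ proper, open, surjective and $LL^{-1}(LL(p))\cap U'=\{p\}$, which is possible exactly as in the closedness argument --- this reduces to lifting a short path in $V$ issuing from $LL(p)$ to a path in $U'$ issuing from $p$. On the part of the path that avoids branch points this is ordinary covering-space path lifting: since $LL$ is a dominant morphism of smooth (hence normal) varieties over $\CC$, generic smoothness yields a proper closed subvariety $\mathcal K\subsetneq E_n$ --- of real codimension $\ge 2$ --- off of which $LL$ is finite \'etale, i.e.\ a genuine covering map. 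Whenever the path meets $\mathcal K$ one runs the same compactness argument as above, now inside $U'$ where $p$ is the \emph{only} preimage of $LL(p)$, to see that the partial lifts on the two sides glue continuously across the branch value. This is precisely the classical statement that a finite surjective morphism of normal varieties is an open branched covering and therefore has the path-lifting property; the delicate point --- and the place where the argument of \cite[Corol.~53]{theo-thesis} should be fleshed out --- is carrying out this branch-point bookkeeping carefully rather than treating it as obvious.
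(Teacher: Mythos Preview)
Your approach is correct and coincides with the paper's: there it is simply recorded that finiteness together with quasi-homogeneity make $LL$ open and proper (the quasi-homogeneity being invoked to verify these elementarily rather than through general theorems on finite morphisms), and the corollary is stated as a consequence of properness, with the details deferred to the thesis. Your continuation argument on $\mathcal T$, and in particular the compactness extraction for the closedness step, is exactly how one unpacks this.

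The one place deserving more care is the openness step. Your phrase ``partial lifts on the two sides glue across the branch value'' tacitly assumes that $\gamma$ meets the branch locus $\mathcal K$ only at isolated instants, but nothing prevents $\gamma$ from lying inside $\mathcal K$ on an entire subinterval, and then there is no covering-space lift to glue. One clean repair is induction on dimension: the restriction $LL|_{LL^{-1}(\mathcal K)}\to\mathcal K$ is again a finite surjection between varieties of strictly smaller dimension, so paths contained in $\mathcal K$ lift by the inductive hypothesis, after which your compactness argument handles the transition times. You already flag this bookkeeping as the delicate point and correctly fall back on the classical statement that finite surjective morphisms of normal complex varieties are branched coverings with path lifting; that appeal is legitimate, but your direct sketch does not quite stand on its own without the induction.
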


\subsubsection{Compatibilities between  \texorpdfstring{$LL$, $\op{rlbl}$, $\mathcal{A}_W$, and $\mathcal{H}$}{LL, rlbl, AW, and H}}

The labeling map $\op{rlbl}$ and the $LL$ morphism are both defined on the same domain $Y$. In the former, the labels $c_i:=\bm\pi(b_{(y,x_i)})$ correspond to loops around points in the intersection $L_y\bigcap\mathcal{H}$, while the latter records the $f_n$-coordinates of those points. Together, and with respect to the parabolic stratifications (see Section~\ref{Section: The parabolic stratification of H}), these two maps satisfy the following compatibility properties that are, along with Prop.~\ref{Prop:labels are parabolic cox elts}, fundamental for the next sections (see the cited proof or the longer exposition in \cite[Corol.~57]{theo-thesis} and its references).

\begin{prop}[{\cite[Prop.~8.4 and Corol.~5.9]{bes-kpi1}}]\label{Prop:codim=l_R=mult=mult}\ \newline
Let $LL(y)=\{x_1,\cdots,x_k\}$, $\op{rlbl}(y)=(c_1,\cdots,c_k)$, and $Z_i$ such that $(y,x_i)\in [Z_i^{\op{reg}}]$. Then,
$$\op{codim}(Z_i)=l_{\mathcal{R}}(c_i)=\op{mult}_{x_i}\big(LL(y)\big)=\op{mult}_{(y,x_i)}(\mathcal{H}).$$
\end{prop}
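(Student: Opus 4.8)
The statement chains together four quantities: $\op{codim}(Z_i)$, $l_R(c_i)$, $\op{mult}_{x_i}(LL(y))$, and $\op{mult}_{(y,x_i)}(\mathcal{H})$. The first and last are already linked by Lemma~\ref{Lemma: mult=codim}: since $(y,x_i)\in[\reg{Z_i}]$, we immediately get $\op{mult}_{(y,x_i)}(\mathcal{H})=\op{codim}(Z_i)$. So the real content is to insert $l_R(c_i)$ and $\op{mult}_{x_i}(LL(y))$ into this equality, and the plan is to prove the two ``inner'' identities $l_R(c_i)=\op{codim}(Z_i)$ and $\op{mult}_{x_i}(LL(y))=\op{mult}_{(y,x_i)}(\mathcal{H})$ separately, after which the proposition follows by transitivity.

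For $l_R(c_i)=\op{codim}(Z_i)$: Proposition~\ref{Prop:labels are parabolic cox elts} tells us that $c_i$ is a Coxeter element of the parabolic subgroup $W_{Z_i}$, and $W_{Z_i}$ acts (as a well-generated reflection group) on the space $V/Z_i$, whose dimension is $\op{codim}(Z_i)$. The reflection length of a Coxeter element of an irreducible (or product of irreducible) reflection group equals the rank of that group — this is the $n=l_R(c)$ part of the setup around Thm.~\ref{Thm: W-Hurwitz number}, i.e. $l_R(c_i)=\op{rk}(W_{Z_i})=\op{codim}(Z_i)$. One must be slightly careful that $V^{c_i}=Z_i$ and not a larger flat; but the condition $(y,x_i)\in[\reg{Z_i}]$ pins down $Z_i$ as the flat whose regular part contains the point, and Springer-regularity of the parabolic Coxeter element guarantees its fixed space is exactly $Z_i$ (no eigenvalue $1$ beyond the forced ones), so no collapsing occurs.

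For $\op{mult}_{x_i}(LL(y))=\op{mult}_{(y,x_i)}(\mathcal{H})$: this is the transversality statement of \S\ref{Section:Transversality of the slice}. The multiplicity of $x_i$ as a root of the discriminant polynomial $\big(\Delta(W,\bm f);(y,t)\big)$ (Eq.~\ref{Eq: Discrim. eqtn}) is by definition $\op{mult}_{x_i}(LL(y))$, and it equals the intersection multiplicity of the line $L_y$ with $\mathcal{H}$ at the point $(y,x_i)$. Because $L_y$ is transverse to $\mathcal{H}$ at every point of intersection — it is never contained in the tangent cone — this intersection multiplicity equals the multiplicity of $\mathcal{H}$ itself at $(y,x_i)$, namely $\op{mult}_{(y,x_i)}(\mathcal{H})$. (For a hypersurface, a generic line through a point meets it with multiplicity equal to the hypersurface's multiplicity at that point; transversality is precisely the condition that $L_y$ behaves like a generic line there.) This is the reference to \cite[Corol.~5.9]{bes-kpi1}.

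\textbf{Main obstacle.} The delicate point is the transversality input: one must know that for \emph{every} $y\in Y$ (not just generic $y$), the slice $L_y$ is transverse to $\mathcal{H}$ at all its intersection points, so that the root multiplicity of the discriminant polynomial really does compute the geometric multiplicity of $\mathcal{H}$. As the excerpt flags (and as \cite[Rem.~38 and Thm.~51]{theo-thesis} discuss), the original argument for transversality has a gap, and the clean way to establish it uses the Springer-theoretic fact that a Coxeter element has no eigenvalue $1$: if $L_y$ failed to be transverse somewhere, the product of the labels $c_i$ would be forced to have a nontrivial fixed space, contradicting that this product is the Coxeter element $c$. Getting this implication airtight — tracking how a non-transverse intersection forces a fixed vector into $\bigcap_i V^{c_i}=V^c$ via the parabolic stratification — is the real work; everything else is assembling Lemma~\ref{Lemma: mult=codim}, Prop.~\ref{Prop:labels are parabolic cox elts}, and the elementary root-multiplicity $=$ intersection-multiplicity bookkeeping.
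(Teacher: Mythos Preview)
Your argument is correct, but it takes a different route from the paper's proof of this proposition. The paper organizes the four quantities as a \emph{cyclic chain of inequalities}: it uses Lemma~\ref{Lemma: mult=codim} for $\op{codim}(Z_i)=\op{mult}_{(y,x_i)}(\mathcal{H})$, the general inequality $\op{codim}(V^g)\leq l_R(g)$ together with $V^{c_i}=Z_i$ for $\op{codim}(Z_i)\leq l_R(c_i)$, a \emph{geometric perturbation} (via the path-lifting property, Corol.~\ref{Corol: Path lifting property of LL}) that splits $x_i$ into $n_i:=\op{mult}_{x_i}(LL(y))$ simple points labeled by reflections to get $l_R(c_i)\leq n_i$, and finally transversality for $n_i=\op{mult}_{(y,x_i)}(\mathcal{H})$; the cycle then forces all inequalities to be equalities. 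You instead prove three \emph{pairwise equalities} directly and chain by transitivity: you replace the paper's perturbation step entirely with the algebraic fact that a (parabolic) Coxeter element has reflection length equal to its rank, which you justify via Springer regularity ($V^{c_i}=Z_i$) for the lower bound and the obvious $W_{Z_i}$-factorization for the upper bound. Your route is a bit slicker for this proposition in isolation---it avoids invoking Corol.~\ref{Corol: Path lifting property of LL}---while the paper's perturbation argument has the advantage that the same ``blow up and track labels'' technique is reused later (e.g.\ in passing from the generic to the general case of the trivialization theorem). One small thing worth making explicit in your write-up: the equality $l_R(c_i)=\op{rk}(W_{Z_i})$ you quote is \emph{a priori} a statement about reflection length in $W_{Z_i}$, and you are silently using both that reflections of $W_{Z_i}$ are reflections of $W$ (upper bound) and the Carter-type inequality $l_R(g)\geq\op{codim}(V^g)$ (lower bound) to transfer it to the ambient length in $W$.
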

\begin{proof}[Sketch of the proof]
The first and last terms are equal by Lemma~\ref{Lemma: mult=codim}, so we only need to prove the three inequalities:
\[
\op{codim}(Z_i) \leq l_{\mathcal{R}}(c_i) \leq \op{mult}_{x_i}\big(LL(y)\big) \leq \op{mult}_{(y,x_i)}(\mathcal{H}).
\]
For the initial inequality, it is true more generally that $\op{codim}(V^g)\leq l_{\mathcal{R}}(g)$ for any $g$ in a reflection group \cite[Prop.~2.11]{reiner-jia-lewis-absolute-order} and $[V^{c_i}]=[Z_i]$ by Prop.~\ref{Prop:labels are parabolic cox elts}.

For the second inequality, it is sufficient to express $c_i$ as a product of $n_i:=\op{mult}_{x_i}\big(LL(y)\big)$ reflections (see Defn.~\ref{Defn: LL map} for the notation). By Corol.~\ref{Corol: Path lifting property of LL} there is a path in the base space $Y$ along which, in the image $LL(y)$, the point $x_i$ ``blows up" into $n_i$ distinct simple points. The map $\op{rlbl}$ will label these by reflections (for instance by Prop.~\ref{Prop:labels are parabolic cox elts}), while their product will stay equal to $c_i$ if the perturbation is small enough.

Finally, the last relation is already an equality by the transversality of the slice (see Section~\ref{Section:Transversality of the slice}). Indeed, since the point $(y,x_i)$ lies in a transverse intersection of a line and a hypersurface, its multiplicity {\it as a point of the intersection} $L_y\bigcap\mathcal{H}$ equals its multiplicity {\it as a point of the discriminant} $\mathcal{H}$ (see \cite[Corol.~12.4]{fulton-intersection-theory}). 
\end{proof}

\begin{cor}
For any point $y\in Y$, the label $\op{rlbl}(y)$ is a block factorization of $c$.
\end{cor}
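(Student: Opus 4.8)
The plan is to extract the two defining features of a reduced block factorization — that the product of the factors is $c$, and that the factorization is length additive in the sense of~\eqref{EQ: block factorizations} — from material already assembled, with the only real content being a short bookkeeping identity on multiplicities.

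First I would record why the product equals $c$. By construction the loop $\delta_y=b_{(y,x_1)}\cdots b_{(y,x_k)}$ completely surrounds $\mathcal{H}$ over $y$, encircling each point of $L_y\cap\mathcal{H}$ once; the monicity of the discriminant in $f_n$ (eq.~\eqref{Eq:Discriminant monic}), together with the fact that $\beta_\theta$ ends above all points of $L_{y'}\cap\mathcal{H}$, forces $\delta_y$ to be homotopic in $B(W)$ to the small generator $\delta$ whose image under $\pi$ is the Coxeter element $c$. Applying the fixed surjection $\pi$, which is a group homomorphism, then gives $c_1\cdots c_k=\pi(b_{(y,x_1)})\cdots\pi(b_{(y,x_k)})=\pi(\delta_y)=\pi(\delta)=c$. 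Hence $\op{rlbl}(y)=(c_1,\dots,c_k)$ is at least a factorization of $c$; it remains to verify length additivity.

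Next I would establish length additivity by a squeeze. On one side, the triangle inequality for the reflection length $l_R$ gives $l_R(c)\le l_R(c_1)+\cdots+l_R(c_k)$ from the factorization just produced. On the other side, Proposition~\ref{Prop:codim=l_R=mult=mult} identifies each $l_R(c_i)$ with $\op{mult}_{x_i}\!\big(LL(y)\big)$, and by Definition~\ref{Defn: LL map} the multiset $LL(y)$ is the multiset of roots of the monic degree-$n$ polynomial in~\eqref{Eq: Discrim. eqtn}, so $\sum_i \op{mult}_{x_i}\!\big(LL(y)\big)=n$. Since $c$ is a Coxeter element of the irreducible rank-$n$ group $W$ one has $l_R(c)=n$ (equivalently $V^c=0$, as $c$ is regular with eigenvalues powers $\zeta^{m}$, $1\le m\le h-1$, of $\zeta=e^{2\pi i/h}$). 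Combining these, $n=l_R(c)\le l_R(c_1)+\cdots+l_R(c_k)=\sum_i\op{mult}_{x_i}\!\big(LL(y)\big)=n$, so the middle inequality is an equality and $l_R(c)=l_R(c_1)+\cdots+l_R(c_k)$. This is exactly condition~\eqref{EQ: block factorizations}, so $\op{rlbl}(y)$ is a reduced block factorization of $c$.

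The only step carrying any subtlety is the homotopy $\delta_y\simeq\delta$ behind the product identity: this is the one place where the monicity of $\Delta(W,\bm{f})$ and the ``staying above'' property of $\beta_\theta$ are genuinely used. It was, however, already sketched when $\op{rlbl}$ was introduced, so for this corollary it can simply be invoked; everything else is the multiplicity count above together with the known value $l_R(c)=n$.
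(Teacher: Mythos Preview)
Your proof is correct and is precisely the intended deduction: the paper states this corollary without proof immediately after Proposition~\ref{Prop:codim=l_R=mult=mult}, and the argument it has in mind is exactly the one you give---the product identity was already established when $\op{rlbl}$ was defined, and length additivity follows from summing the equalities $l_R(c_i)=\op{mult}_{x_i}\big(LL(y)\big)$ to get $\sum_i l_R(c_i)=n=l_R(c)$. The one minor imprecision is your parenthetical on the eigenvalues of $c$ (they are $\zeta$ raised to the co-exponents, not all of $\zeta,\zeta^2,\dots,\zeta^{h-1}$), but the only fact you actually use, that $1$ is not an eigenvalue and hence $V^c=0$, is exactly what the paper invokes in \S\ref{Section:Transversality of the slice}.
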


\subsection{The trivialization theorem}  
\label{Section: The trivialization theorem}

To be able to study the combinatorics of block factorizations via the $\op{rlbl}$ map, we need to know how the label $\op{rlbl}(y)$ is affected as $y$ varies in the base space $Y$. In fact, some (often all) of this information is contained in the image $LL\big(y(t)\big)$ of the variation (path) $y(t)$; this is described in detail with the {\color{blue} Hurwitz rule} \cites[Lem.~6.15]{bes-kpi1}[Lem.~41]{theo-thesis}.

As a matter of fact, there is more we can do. The path lifting property of the LL map (Corol.~\ref{Corol: Path lifting property of LL}) allows us to use the space $E_n$ as input, and study how perturbing the point configurations there will affect the labels. The situation is particularly nice when the multiplicities of the points in a path $\gamma(t):[0,1]\rightarrow E_n$ are constant. 

Assume indeed that there are $k$ distinct points $x_i$ and thus identify $\gamma$ with a braid with $k$ strands. Notice that we can use the natural (complex-lexicographic) orderings of the points $x_i$ at times $t=0$ and $t=1$ to map the braid $\gamma$ to an element $g\in B_k$ of the braid group on $k$ strands, even when the configurations $\gamma(0)$ and $\gamma(1)$ are different.

Consider now a lift (under $LL$, to the space $Y$) of the path $\gamma$, that starts at some point $y\in LL^{-1}(\gamma(0))$ and runs until some other point $y'$ which we denote $\gamma\cdot y$. Note that this is not a Galois action and that we do not {\it a priori} assume that $y'$ is unique; we will call it the {\it path lifting} action of $\gamma$. 

\begin{lem}[{\cite[Corol.~6.20]{bes-kpi1}}]\label{Lem: rlbl is equivariant Hurwitz-path lifting}
The labelling map is equivariant with respect to the Hurwitz action (see Defn.~\ref{Defn: The (algebraic) Hurwitz action}) and the path lifting action. That is,
$$\op{rlbl}(\gamma\cdot y)=g*\op{rlbl}(y),$$where $\gamma$ and $g$ are as above.
\end{lem}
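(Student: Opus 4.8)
The statement to prove is Lemma~\ref{Lem: rlbl is equivariant Hurwitz-path lifting}: that $\op{rlbl}(\gamma\cdot y)=g*\op{rlbl}(y)$, where $\gamma$ is a braid in $E_n$ with constant multiplicity profile, $g\in B_k$ the associated abstract braid on the $k$ distinct strands, and $\gamma\cdot y$ a $LL$-lift of $\gamma$ starting at $y$. The plan is to reduce to the case where $g$ is one of the standard generators $s_i\in B_k$ (or its inverse), since both actions are by group homomorphisms and a general braid is a word in the $s_i^{\pm 1}$; composing lifts of sub-paths of $\gamma$ realizes the group multiplication on the path-lifting side, while the Hurwitz action is a genuine $B_k$-action on factorizations. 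So it suffices to track a single elementary swap of two adjacent (in complex-lexicographic order) strands $x_i,x_{i+1}$.

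\textbf{First step: set up the local picture.} Fix $y\in LL^{-1}(\gamma(0))$ and use the labeling construction of \S\ref{Section: The labeling map rlbl}: the factorization $\op{rlbl}(y)=(c_1,\dots,c_k)$ comes from loops $b_{(y,x_j)}$ built from the tentacle paths $\beta_j$ emanating from the common far point $(y,x_\infty)$, ordered left-to-right by index. When $\gamma=s_i$ is the half-twist exchanging $x_i$ and $x_{i+1}$, the endpoint configuration $\gamma(1)$ differs from $\gamma(0)$ only in that these two points have switched complex-lexicographic positions. Now lift $\gamma$ under $LL$ to a path in $Y$ from $y$ to $y'=\gamma\cdot y$; this is legitimate by the path-lifting property Corol.~\ref{Corol: Path lifting property of LL}. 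The key is that the tentacle system based at $(y',x_\infty)$ for the endpoint configuration, when pulled back along the lifted path, is homotopic (rel endpoints, in the complement of $\mathcal{H}$ inside the relevant slice) to the system obtained by \emph{dragging} the original tentacles $\beta_j$ along the motion of the points. Because only $x_i$ and $x_{i+1}$ move, and they execute a half-twist, the dragged tentacle for the point that ends up in position $i$ is $\beta_{i+1}$ \emph{conjugated past} $\beta_i$ — precisely the geometric origin of the Hurwitz formula in Defn.~\ref{Defn: The (algebraic) Hurwitz action}.

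\textbf{Second step: extract the braid-group identity.} Reading off $\op{rlbl}(y')$ from this dragged tentacle system and applying $\pi$, the new factorization is $(c_1,\dots,c_{i-1},c_{i+1},c_{i+1}^{-1}c_ic_{i+1},c_{i+2},\dots,c_k)$, which is exactly $s_i*\op{rlbl}(y)$; the sign/orientation of the half-twist matches the right Hurwitz action because the tentacles leave $x_\infty$ in index order and the swap passes strand $i$ under (not over) strand $i+1$. For $\gamma=s_i^{-1}$ one gets the inverse Hurwitz move, and for a general braid one concatenates, using that $LL$-lifting is compatible with path concatenation (lift the first factor to $y_1$, then lift the second starting at $y_1$) and that $g\mapsto g*(-)$ is an action. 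One should also remark that $\gamma\cdot y$ need not be unique, but the identity holds for \emph{any} chosen lift — the homotopy class of the dragged tentacle system, hence the output factorization, depends only on the homotopy class of $\gamma$ in $E_n$ rel endpoints and on $y$.

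\textbf{Main obstacle.} The delicate point is the homotopy claim in the first step: justifying that the far-basepoint path $\beta_\theta$ and the tentacle system transport correctly along the $LL$-lift, i.e. that ``dragging the configuration'' really computes the endpoint's labeling map and that no extra monodromy around $\mathcal{H}$ is introduced. This is exactly where the fat-basepoint trick (\cite[Appendix~A]{bes-kpi1}) and the monicity of the discriminant \eqref{Eq:Discriminant monic} (which keeps the point $(y',x_\infty)$ above all of $L_{y'}\cap\mathcal{H}$ throughout) are needed, together with properness of $LL$ to control the lift. The combinatorial bookkeeping of \emph{which} strand ends up in position $i$ and hence whether one gets $s_i$ or $s_i^{-1}$ must also be done carefully, but once the half-twist case is pinned down the rest is formal.
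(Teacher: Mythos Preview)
Your proposal is correct and follows essentially the same approach as the paper: reduce to a single generator $s_i$ of $B_k$ and verify the equivariance by tracking how the tentacle system $\{\beta_j\}$ deforms under the half-twist, then concatenate for an arbitrary braid. The paper's proof is more pictorial (three figures comparing the dragged loops $(\beta_1,\beta_2)$ with the new labeling loops $(\beta_1',\beta_2')$ and reading off the identity $(\beta_2',\beta_2'^{-1}\beta_1'\beta_2')=(\beta_1,\beta_2)$), whereas you articulate the same homotopy in words and are more explicit about the supporting ingredients (the fat-basepoint trick, monicity of the discriminant to keep $x_\infty$ above the configuration, properness for the lift, and the orientation bookkeeping for $s_i$ vs.\ $s_i^{-1}$). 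Substantively the arguments coincide.
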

\begin{proof}

Any loop $\gamma\in\reg{E_n}$ can be decomposed as a sequence of moves that only affect two neighboring points, changing their relative positions. In this case, the corresponding braid $g\in B_n$ is just one of the canonical generators $s_i$ whose action is given as in Defn.~\ref{Defn: The (algebraic) Hurwitz action}.

The following Figure~\ref{fig-hurw-action} describes the effect of such a Hurwitz move on the labeling map. The loops $\beta_i$ in Fig.~\ref{fig-hurw-act-a} are the ones we used in Section~\ref{Section: The labeling map rlbl} to define the labels. We write $\op{rlbl}(y)=(\beta_1,\beta_2)$ forgetting the surjection $\bm\pi:B(W)\twoheadrightarrow W$.

The next two Figures~\ref{fig-hurw-act-b} and~\ref{fig-hurw-act-c} show the slice $L_{\gamma\cdot y}$ and on it are drawn two pairs of loops. The path lifting propert of the $LL$ map (Corol.~\ref{Corol: Path lifting property of LL}) guarantees that the blue loops $(\beta_1,\beta_2)$ are homotopic (notice that the homotopy happens inside $W\setminus V^{\op{reg}}$) to those in Fig.~\ref{fig-hurw-act-a}. The red ones $(\beta_1',\beta_2')$, on the other hand, are those assigned by the labeling map. As we can see, we have $$s_1^{-1}*\op{rlbl}(\gamma\cdot y)=s_1^{-1}*(\beta_1',\beta_2')=(\beta_2',\beta_2'^{-1}\beta_1'\beta_2')=(\beta_1,\beta_2)=\op{rlbl}(y).$$
\begin{figure}[h]
\centering
\begin{subfigure}[t]{0.3\textwidth}
\centering
\includegraphics[width=0.9\linewidth]{2.png}
\caption{\footnotesize As the two points in $LL(y)$ move around each other...}
\label{fig-hurw-act-a}
\end{subfigure}\quad
\begin{subfigure}[t]{0.3\textwidth}
\centering
\includegraphics[width=0.9\linewidth]{3.png}
\caption{\footnotesize ...the loop $\beta_2$ streches to avoid $\beta_1$.}
\label{fig-hurw-act-b}
\end{subfigure}\quad
\begin{subfigure}[t]{0.3\textwidth}
\centering
\includegraphics[width=0.9\linewidth]{4.png}
\caption{\footnotesize Our previous loop $\beta_2$ is now homotopic to $\beta_2'^{-1}\beta_1'\beta_2'$.}
\label{fig-hurw-act-c}
\end{subfigure}
\caption{The Hurwitz action.}
\label{fig-hurw-action}
\end{figure}
\end{proof}

\subsubsection{The generic case}

The driving (combinatorial) force behind the trivialization theorem (Thm.~\ref{thm: trivialization}) is the following Proposition. The proof is uniform (via the combinatorics of chromatic pairs) for real reflection groups \cite[see][Prop.~1.6.1]{bes-dual-braid}, and case-by-case for well generated $W$.

\begin{prop}\cite[Prop.~7.6]{bes-kpi1} \label{Prop: The Hurwitz action is transitive}
The Hurwitz action is transitive on $\op{Red}_{\mathcal{R}}(c)$.
\end{prop}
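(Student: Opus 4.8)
The proposed strategy is to prove the transitivity of the Hurwitz action on $\op{Red}_W(c)$ by a double induction, on the rank $n$ of $W$ and on the ``distance from the identity'' within a fixed factorization, reducing the general statement to a local claim about products of two reflections. First I would fix a reduced reflection factorization $(t_1,\dots,t_n)$ of $c$ and show that, using the braid moves, it can be brought into a normal form in which the first factor equals a prescribed reflection $t$. Once this is achieved for every factorization, any two factorizations agree in their first coordinate after suitable Hurwitz moves; quotienting by $t$ (that is, passing to the factorization of $t^{-1}c$, which lies in $\op{Red}_{W'}(t^{-1}c)$ for the parabolic subgroup $W'=W_{V^{t^{-1}c}}$ of rank $n-1$, and noting that $t^{-1}c$ is a Coxeter element of $W'$ by Prop.~\ref{Prop:labels are parabolic cox elts}) lets the inductive hypothesis finish the argument. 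So the crux is the \emph{rank-two / first-factor} step: given $(t_1,\dots,t_n)$, one can apply braid moves supported on the first two strands to replace $t_1$ by any reflection $t$ below $c$ whose hyperplane is ``compatible'' — and then, iterating, reach the specific target reflection dictated by a chosen reference factorization.

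For that rank-two step I would isolate the following claim: if $c' = t_i t_{i+1}$ is a product of two reflections with $l_R(c')=2$, then the set of ordered pairs of reflections $(a,b)$ with $ab=c'$ is a single Hurwitz orbit, and moreover every reflection $a$ with $a\le_R c'$ occurs as a first coordinate of some pair in this orbit. In the real case this is exactly the statement that the reflections below a ``rank-two parabolic Coxeter element'' form a single cycle under the rotation $x\mapsto (t_i t_{i+1})^{-1} x (t_i t_{i+1})$, which is the content of Bessis' chromatic-pair combinatorics cited as \cite[Prop.~1.6.1]{bes-dual-braid}; for general well-generated $W$ one invokes the classification and checks the finitely many rank-two irreducible complex reflection groups $G(d,d,2)$, $G(d,1,2)$, and the exceptional $G_4,\dots,G_{22}$ one at a time. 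The point is that transitivity of the Hurwitz action on $\op{Red}_W(c)$ is a rank-generic phenomenon built from this rank-two input, and the rank-two input is where the case analysis is confined.

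The key steps, in order, would be: (1) reduce to the rank-two claim by the peeling-off-the-first-factor induction sketched above, using that $t^{-1}c$ is a Coxeter element of a rank-$(n-1)$ parabolic (Prop.~\ref{Prop:labels are parabolic cox elts} guarantees the relevant elements are parabolic Coxeter elements, and the interval structure of $NC(W)$ guarantees $l_R$-additivity is preserved); (2) establish the rank-two claim, uniformly via chromatic pairs for Coxeter groups and by a finite check for the remaining well-generated complex reflection groups of rank two; (3) assemble the induction, being careful that the braid moves used to normalize the first factor are supported only on strands $1$ and $2$ so that the remaining $(n-1)$-tuple is untouched and the inductive hypothesis applies verbatim. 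The main obstacle I expect is genuinely step (2) in the complex case: there is no known conceptual substitute for the chromatic-pair argument, so one is forced into the Shephard--Todd list, and it is precisely this that prevents the proof — and hence the trivialization theorem and Thm.~\ref{Thm: W-Hurwitz number} it rests on — from being case-free outside the Weyl-group setting (cf.\ the conjectural topological approach of \cite{bessis-dual-braid-monoid-for-the-free-group} mentioned in the footnote).
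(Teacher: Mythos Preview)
The paper does not give its own proof of this proposition; it simply cites Bessis and records that the argument is uniform for real $W$ via chromatic pairs and case-by-case for general well-generated $W$. So there is no paper-proof to match, but your sketch has two genuine gaps.

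First, the reduction to rank two does not work as you describe it. Your step~(3) asserts that the first factor can be normalized to a prescribed reflection $t$ using braid moves supported only on strands $1$ and $2$; but such moves can only replace $t_1$ by reflections lying below $t_1t_2$, and an arbitrary $t\le_R c$ need not satisfy $t\le_R t_1t_2$. If instead you allow moves on all strands (as your word ``iterating'' perhaps intends), the statement ``any prescribed $t\le_R c$ can be brought to position $1$ in any reduced factorization'' is not a consequence of rank-two transitivity: knowing that each dihedral interval $[1,t_it_{i+1}]$ is a single Hurwitz orbit says nothing about the global connectivity you need, and proving that connectivity is essentially the full content of the proposition. This is also why the existing proofs do \emph{not} confine the case analysis to rank two, contrary to your last paragraph: the chromatic-pair argument for real groups is a global construction using chambers, and for complex groups the verification in \cite{bes-kpi1} is carried out at full rank for each irreducible $W$, not just for the rank-two parabolics.

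Second, your appeal to Prop.~\ref{Prop:labels are parabolic cox elts} to conclude that $t^{-1}c$ is a Coxeter element of a rank-$(n-1)$ parabolic is circular in this paper's logical order. That proposition only asserts that \emph{labels} $\op{rlbl}(y,x_i)$ are parabolic Coxeter elements; to know that an arbitrary noncrossing element $t^{-1}c$ is one you would need Corol.~\ref{Corol: parabolic equals noncrossin}, which here is derived from the trivialization theorem, which in turn rests on the Hurwitz transitivity you are trying to prove. (This circularity can be avoided by going back to \cite{bes-kpi1} directly, but not by the citation you give.)
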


Assume now that we start with a configuration $e\in E_n^{\op{reg}}$ (that is, $e$ has $n$ {\it distinct} points $x_i$, see \eqref{Eq: Defn E_n}), and a point $y\in LL^{-1}(e)$ in its preimage. The label $\op{rlbl}(y)$ will be a {\it reduced reflection factorization} of $c$ by Prop.~\ref{Prop:codim=l_R=mult=mult}. Now, the transitivity of the Hurwitz action and Lemma~\ref{Lem: rlbl is equivariant Hurwitz-path lifting} imply that {\it all} reduced reflection factorizations of $c$ appear as labels of points in the fiber $LL^{-1}(e)$.

At this point, we are ready to introduce the one (slightly) unsatisfying aspect of this theory. It turns out, that each such reduced reflection factorization of $c$ will appear as the label of a point $y\in LL^{-1}(e)$ {\it exactly once}. However, we have no ``good" reason for this fact; its proof is based on the observation \cite[Prop.~7.6]{bes-kpi1} that the size of the generic fiber $LL^{-1}(e)$ (the degree of $LL$) and the size of the set $\op{Red}_{\mathcal{R}}(c)$ happen to be equal:

\begin{prop}[The numerological coincidence]\label{Prop: Numerological coincidence}
The degree of the $LL$ map equals the number of reduced reflection factorizations of a Coxeter element $c$:
$$\op{deg}(LL)=\dfrac{h^nn!}{|W|}=|\op{Red}_{\mathcal{R}}(c)|.$$
\end{prop}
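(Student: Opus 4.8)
The plan is to assemble the statement from the two ends of the claimed chain of equalities: a purely geometric computation of $\op{deg}(LL)$ on the left, and the externally supplied enumeration of $\op{Red}_W(c)$ on the right. Neither half interacts with the other, so the ``proof'' is really a bookkeeping step — which is exactly why the author calls it a coincidence.

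For the left equality I would first record that $LL$ is a finite morphism. By the criterion for quasi-homogeneous self-maps of $\CC^n$ recalled in \S\ref{Section: Finiteness of the LL map and quasi-homogeneity}, this reduces to checking $LL^{-1}({\bm 0})={\bm 0}$, and the latter holds because a point of $LL^{-1}({\bm 0})$ forces $L_y\cap\mathcal{H}$ to be the single point $(y,0)$ with multiplicity $n$; by transversality of the slice and Lemma~\ref{Lemma: mult=codim} (equivalently, the parabolic stratification), the only such point of $\mathcal{H}$ is the origin. With finiteness in hand, quasi-homogeneity lets Bezout's theorem read off the degree from the weights, so using the algebraic description of $LL$ from Defn.~\ref{Defn: LL map} and $\op{deg}(\alpha_i)=ih$ from Prop.~\ref{Prop: Discriminant is monic},
$$\op{deg}(LL)=\frac{\prod_{i=2}^n\op{deg}(\alpha_i)}{\prod_{i=1}^{n-1}\op{deg}(f_i)}=\frac{\prod_{i=2}^n ih}{\prod_{i=1}^{n-1}d_i}=\frac{h^{n-1}n!}{|W|/h}=\frac{h^nn!}{|W|},$$
where I use $\prod_{i=2}^n i=n!$, the classical identity $\prod_{i=1}^n d_i=|W|$, and $d_n=h$; this is precisely eq.~\eqref{EQ: degree of LL via Bexout}. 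I would also remark that the geometric machinery already assembled gives, for free, the inequality $\op{deg}(LL)\ge|\op{Red}_W(c)|$: for a generic $e\in\reg{E_n}$ the fiber $LL^{-1}(e)$ has $\op{deg}(LL)$ points, the labels $\op{rlbl}(y)$ lie in $\op{Red}_W(c)$ by Prop.~\ref{Prop:codim=l_R=mult=mult}, and transitivity of the Hurwitz action (Prop.~\ref{Prop: The Hurwitz action is transitive}) together with the equivariance of $\op{rlbl}$ under loops based at $e$ (Lemma~\ref{Lem: rlbl is equivariant Hurwitz-path lifting}) shows this assignment is onto $\op{Red}_W(c)$.

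The right equality $h^nn!/|W|=|\op{Red}_W(c)|$ is nothing but Thm.~\ref{Thm: W-Hurwitz number}, the $W$-analogue of Hurwitz's count, which I would simply cite. Concatenating it with the displayed computation yields the proposition.

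The ``hard part'' is entirely Thm.~\ref{Thm: W-Hurwitz number}, which is exactly the statement that at present has no uniform proof for general well-generated $W$ — it is known via Deligne's recursion solved case-by-case, Bessis's combinatorial treatment of the infinite family plus machine verification for the exceptional groups, and the recent case-free derivation for Weyl groups. One might hope to bypass this and instead use the trivialization theorem to produce a natural bijection between a generic fiber of $LL$ and $\op{Red}_W(c)$, which would give a conceptual proof of the equality; but the bijectivity of that labeling is itself deduced from the present numerical equality (the inequality of the previous paragraph goes only one way, since no a priori injectivity of $\op{rlbl}$ on a fiber is available), so that route is circular. The honest summary is that the statement is a genuine \emph{numerological coincidence}: a geometric computation (Bezout applied to $LL$) and a combinatorial one (the $W$-Hurwitz number) that happen to produce the same integer.
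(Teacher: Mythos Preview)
Your proposal is correct and matches the paper's approach exactly: the paper does not supply a formal proof of this proposition but rather assembles it from the Bezout degree computation (eq.~\eqref{EQ: degree of LL via Bexout}) for the left equality and Thm.~\ref{Thm: W-Hurwitz number} for the right, noting explicitly that the second equality lacks a uniform proof beyond Weyl groups. Your additional remarks on the one-sided inequality $\op{deg}(LL)\ge|\op{Red}_W(c)|$ and the circularity of invoking the trivialization theorem are consistent with, and slightly more explicit than, the surrounding discussion in the paper.
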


As we mentioned in the introduction (after Thm.~\ref{Thm: W-Hurwitz number}), the second equality above has a uniform proof only for real reflection groups. It is still an open problem to find a proof that works for all types, and even better one that relies on the geometry of the $LL$ map; a promising approach is via the theory of Frobenius manifolds as in the work of Hertling and Roucairol, where an equivalent statement \cite[Thm.~7.1]{hertling} is proven for the simply laced finite Coxeter groups in the context of ADE singularities. 

The previous discussion and the surjectivity of $LL$ (Corol.~\ref{Corol: Path lifting property of LL}) are enough to show that the map $$LL\times\op{rlbl}:Y^{\op{reg}}\rightarrow E_n^{\op{reg}}\times \op{Red}_{\mathcal{R}}(c),$$ is a bijection (here we simply define $Y^{\op{reg}}:=LL^{-1}(E_n^{\op{reg}})$). This is, in a sense, the {\it generic} version of the trivialization theorem (Thm.~\ref{thm: trivialization}). 

\subsubsection{The general case}

To extend the bijection to all of $Y$, we first need to introduce a suitable target space. We denote by $E_n\boxtimes D_{\bullet}(c)$ the set of compatible pairs, of a configuration $e\in E_n$, and a block factorization $\sigma\in D_{\bullet}$  (see \eqref{EQ: block factorizations}). Following \cite[Defn.~7.17]{bes-kpi1} we define (recall that $n_i$ denotes the multiplicity of $x_i$ in $e$):
$$E_n\boxtimes D_{\bullet}(c):=\Big\{ \big(\underbrace{(x_1,\cdots,x_k)}_{e\in E_n},\underbrace{(w_1,\cdots,w_l)}_{\sigma\in D_{\bullet}(c)}\big)\in E_n\times D_{\bullet}(c)\ |\ k=l\text{ and } n_i=l_{\mathcal{R}}(w_i)\ \Big\}.$$

Assuming now the generic case, we can construct arbitrary block factorizations by perturbing $y\in Y$ in such a way that a selection of the points $x_i\in LL(y)$ collide and thus, their labels merge into the block factors $w_i$. The uniqueness of this construction relies on some local properties of the $LL$ map (openness), but also on a subtle fact \cite[Corol.~66]{theo-thesis} about reflection factorizations of a parabolic Coxeter element $c_Z$; namely, it suffices to use the reflections in $W_Z$ (i.e. $\op{Red}_{\mathcal{R}(W_Z)}(c_Z)=\op{Red}_{\mathcal{R}(W)}(c_Z)$). We state the theorem \cite[Thm.~7.20]{bes-kpi1}:

\begin{thm}[Trivialization Theorem]\label{thm: trivialization}
The map $LL\times \op{rlbl}:Y\rightarrow E_n\boxtimes D_{\bullet}(c)$ is a bijection.
\end{thm}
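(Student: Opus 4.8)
The plan is to bootstrap from the generic bijection $LL\times\op{rlbl}:\reg{Y}\to\reg{E_n}\times\op{Red}_W(c)$, which was established in the previous subsection, to the full statement by a stratification argument on the target $E_n\boxtimes D_\bullet(c)$ according to the multiplicity partition of the configuration. Both source and target are stratified: on the source side, $Y$ is partitioned by the combinatorial type of the multiset $LL(y)$ (which points coincide, and with what multiplicity); on the target side, $E_n\boxtimes D_\bullet(c)$ is partitioned by the composition $(n_1,\dots,n_k)$ recording the block sizes. Since $LL\times\op{rlbl}$ is visibly compatible with these stratifications (Prop.~\ref{Prop:codim=l_R=mult=mult} guarantees $l_R(w_i)=\op{mult}_{x_i}(LL(y))$, so the image of a point in a given stratum lands in the matching stratum), it suffices to prove that the restriction to each stratum is a bijection, and the deepest (generic) stratum is already handled.

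First I would prove \textbf{surjectivity} onto each stratum. Fix a pair $\big((x_1,\dots,x_k),(w_1,\dots,w_k)\big)\in E_n\boxtimes D_\bullet(c)$ with $l_R(w_i)=n_i=\op{mult}_{x_i}$. Choose a path $\gamma(t)$ in $E_n$ that, at $t=1$, equals the target configuration $(x_1,\dots,x_k)$, and at $t=0$ has split each multiple point $x_i$ into $n_i$ nearby distinct simple points, so that $\gamma(0)\in\reg{E_n}$; arrange the splitting so that the local braiding around each cluster realizes a prescribed reduced reflection factorization of $w_i$ into $n_i$ reflections. (Such a factorization exists and can be taken inside $W_{Z_i}$ for $Z_i=V^{w_i}$ by the subtle fact cited just before the theorem, $\op{Red}_{W_{Z_i}}(w_i)=\op{Red}_W(w_i)$; compatibility of the local pictures is ensured because the $w_i$'s multiply to $c$ with length additivity, so concatenating the local factorizations gives an element of $\op{Red}_W(c)$.) By the generic bijection there is a (unique) $y_0\in\reg{Y}$ with $LL(y_0)=\gamma(0)$ and $\op{rlbl}(y_0)$ the chosen reflection factorization. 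Now lift $\gamma$ along $LL$ via Corol.~\ref{Corol: Path lifting property of LL} starting at $y_0$; call the endpoint $y_1=\gamma\cdot y_0$. Then $LL(y_1)=(x_1,\dots,x_k)$ by construction, and by the equivariance Lemma~\ref{Lem: rlbl is equivariant Hurwitz-path lifting} the label $\op{rlbl}(y_1)$ is obtained from $\op{rlbl}(y_0)$ by the Hurwitz action of the braid of $\gamma$ — which by design collapses each cluster's reflections into the block $w_i$. Hence $(LL\times\op{rlbl})(y_1)$ is the desired pair.

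Next, \textbf{injectivity} on each stratum. Suppose $y_1,y_1'\in Y$ have the same image $\big((x_1,\dots,x_k),(w_1,\dots,w_k)\big)$. Perform the \emph{reverse} of the above: pick the same splitting path $\gamma$ (run backwards) to move the configuration into $\reg{E_n}$, lift it starting from $y_1$ and from $y_1'$ respectively, obtaining points $y_0,y_0'\in\reg{Y}$. The subtlety here is that the path lifting is not a priori single-valued, so I must argue that the lift of $\gamma$ back to the generic stratum is \emph{locally} well-defined near a point of a deeper stratum; this uses that $LL$ is open and finite (flat), so over a small neighborhood of $(x_1,\dots,x_k)$ the map $LL$ looks, on each local branch, like a product of the generic situation near each cluster — concretely, the local factorizations $\op{rlbl}$ assigns to the split points are forced, up to the Hurwitz action of the pure braid group of the local disk, to be \emph{reduced reflection factorizations of $w_i$}, and these are permuted transitively but, crucially, the \emph{block} $w_i$ they multiply to is an invariant. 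Tracking $\op{rlbl}$ via Lemma~\ref{Lem: rlbl is equivariant Hurwitz-path lifting}, we get $LL(y_0)=LL(y_0')=\gamma(0)$ and $\op{rlbl}(y_0),\op{rlbl}(y_0')$ both projecting (under collapsing clusters) to the common block factorization; combined with the generic injectivity this pins down $y_0=y_0'$, whence $y_1=y_1'$ after lifting forward again.

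The main obstacle — and the place where the argument requires genuine care rather than routine bookkeeping — is the local structure of $LL$ near a non-generic point, i.e.\ controlling the fiber of $LL$ over a configuration with coinciding points in terms of the product structure ``cluster by cluster''. This is exactly the content of the two technical citations invoked just before the theorem statement (\cite[Corol.~66]{theo-thesis} and \cite[Lemma~7.4]{bessis-kpi1-arxiv}): that reflection factorizations of a parabolic Coxeter element $c_Z$ only ever use reflections of $W_Z$, so that the ``merging'' of labels into a block factor is both possible and unambiguous. Granting this local product description — which follows from the transversality of the slice (\S\ref{Section:Transversality of the slice}) together with the local product form of $\mathcal{H}$ near $\orb{\reg{Z}}$ described in \S\ref{Section: The parabolic stratification of H} — the stratum-by-stratum bijection assembles, and since the strata of $Y$ exhaust $Y$ and those of $E_n\boxtimes D_\bullet(c)$ exhaust the target, $LL\times\op{rlbl}$ is a bijection on the nose.
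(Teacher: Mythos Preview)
Your approach matches the paper's sketch (perturb to the generic stratum, use the generic bijection, then collide points back), and you correctly identify the key technical inputs: openness/finiteness of $LL$, the local product form of $\mathcal{H}$ near a parabolic stratum, and the fact $\op{Red}_{W_Z}(c_Z)=\op{Red}_W(c_Z)$.

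However, your injectivity step contains a genuine logical gap. You lift $y_1,y_1'$ backwards along a splitting path to points $y_0,y_0'\in\reg{Y}$, observe that $\op{rlbl}(y_0)$ and $\op{rlbl}(y_0')$ both \emph{project} to the same block factorization $(w_1,\dots,w_k)$, and then write ``combined with the generic injectivity this pins down $y_0=y_0'$''. That does not follow: two distinct refinements of the same block factorization into reflections give distinct points of $\reg{Y}$ by the generic bijection, so equal projection says nothing. (There is also the problem that the backwards lift from $y_1$ is not single-valued, as you yourself note; so $y_0$ is not even well-defined.)

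The repair is to reverse the logic. Assume $y_1\neq y_1'$ and derive a contradiction with the \emph{generic} bijection. By openness of $LL$, disjoint neighborhoods of $y_1,y_1'$ both map \emph{onto} a neighborhood of the common configuration; choose one generic configuration $e_0$ in that neighborhood. The crucial point is that near $y_1$ (resp.\ $y_1'$) \emph{every} reduced reflection refinement of $(w_1,\dots,w_k)$ is realized as $\op{rlbl}$ of some preimage of $e_0$: this is exactly where the local product structure of $\mathcal{H}$ and the identity $\op{Red}_{W_{Z_i}}(w_i)=\op{Red}_W(w_i)$ are used, since locally the picture is the generic trivialization for each parabolic $W_{Z_i}$. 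Hence one can choose $y_0$ near $y_1$ and $y_0'$ near $y_1'$ with $LL(y_0)=LL(y_0')=e_0$ and $\op{rlbl}(y_0)=\op{rlbl}(y_0')$ \emph{exactly}; since the neighborhoods are disjoint, $y_0\neq y_0'$, contradicting the injectivity of $LL\times\op{rlbl}$ on $\reg{Y}$. This is the argument the paper's sketch points to when it says ``uniqueness relies on openness of $LL$ together with the subtle fact $\op{Red}_{W_Z}(c_Z)=\op{Red}_W(c_Z)$''.

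A smaller point: Lemma~\ref{Lem: rlbl is equivariant Hurwitz-path lifting} is stated for paths along which the multiplicities are \emph{constant}, so invoking it verbatim for a collision path (as you do in the surjectivity paragraph) is not quite licit. What you actually need there is the easier ``merging'' statement (part of the Hurwitz rule): when clustered simple points collide, their labels multiply in order to give the block label. This is what the paper gestures at with ``their labels merge into the block factors $w_i$''.
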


An immediate consequence of the trivialization theorem and Prop.~\ref{Prop:labels are parabolic cox elts} is the following characterization of parabolic Coxeter elements. Bessis had given a uniform (combinatorial) proof for real $W$ in \cite[Lem.~1.4.3]{bes-dual-braid} and had originally checked it case-by-case for well-generated groups. He then gave a uniform geometric argument in \cite{bes-kpi1}, some details of which we emend in \cite[Prop.~39 and Rem.~40]{theo-thesis}.

\begin{cor}\label{Corol: parabolic equals noncrossin}
For an irreducible, well-generated, complex reflection group $W$, the set of parabolic Coxeter elements and the set of elements that are noncrossing, with respect to any Coxeter element $c$, coincide.
\end{cor}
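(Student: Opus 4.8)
The plan is to establish the two inclusions separately. For ``noncrossing $\Rightarrow$ parabolic Coxeter'' I would use the trivialization theorem together with Proposition~\ref{Prop:labels are parabolic cox elts}; for ``parabolic Coxeter $\Rightarrow$ noncrossing'' I would exploit that every parabolic stratum $\orb{\reg{Z}}$ of $\mathcal{H}$ is automatically hit by some slice $L_y$, read off the resulting label via Proposition~\ref{Prop:labels are parabolic cox elts}, and then propagate through a conjugacy class of Coxeter elements of $W_Z$.

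For the first inclusion, let $w$ be noncrossing with respect to a Coxeter element $c$; we may assume $w\neq 1$ (the identity is trivially both). Write $c=w_1\cdots w_k$ as a reduced block factorization with $w_1=w$ (so $k=1$ if $w=c$, and otherwise $k=2$ with $w_2=w^{-1}c$), so that $(w_1,\dots,w_k)\in D_\bullet(c)$. Choose any centered configuration $e=\{x_1,\dots,x_k\}\in E_n$ with $\op{mult}_{x_i}(e)=l_R(w_i)$; since $\sum_i l_R(w_i)=n$, such an $e$ exists and the pair $\big(e,(w_1,\dots,w_k)\big)$ lies in $E_n\boxtimes D_\bullet(c)$. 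By the trivialization theorem (Thm.~\ref{thm: trivialization}) there is a $y\in Y$ with $LL(y)=e$ and $\op{rlbl}(y)=(w_1,\dots,w_k)$; in particular $w=\op{rlbl}(y,x_1)$. Proposition~\ref{Prop:labels are parabolic cox elts} then says $w$ is a Coxeter element of the parabolic $W_{Z_1}$, where $Z_1$ is the flat with $(y,x_1)\in\orb{\reg{Z_1}}$; that is, $w$ is a parabolic Coxeter element.

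Conversely, let $w$ be a parabolic Coxeter element, say a Coxeter element of $W_Z$ for a flat $Z$ with $\op{codim}(Z)=m$. The stratum $\orb{\reg{Z}}\subseteq\mathcal{H}$ is nonempty (it is $Z$ with finitely many proper subspaces removed, read in the quotient), so pick a point $[q]=(y_0,x_1)\in\orb{\reg{Z}}$ and look at the slice $L_{y_0}$. Then $(y_0,x_1)$ is one of the points of $L_{y_0}\cap\mathcal{H}$ and it lies in $\orb{\reg{Z}}$, so by Proposition~\ref{Prop:labels are parabolic cox elts} the label $c_1:=\op{rlbl}(y_0,x_1)$ is a Coxeter element of $W_Z$; and since $\op{rlbl}(y_0)$ is a block factorization of $c$ (the corollary to Proposition~\ref{Prop:codim=l_R=mult=mult}) with $c_1$ among its factors, $c_1\leq_R c$. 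Thus \emph{some} Coxeter element of $W_Z$ is noncrossing. To reach the prescribed $w$, recall that the Coxeter elements of $W_Z$ form a single conjugacy class in $W_Z$ (Definition~\ref{Defn:Cox elt}, applied to each irreducible factor of the well-generated group $W_Z$): write $w=hc_1h^{-1}$ with $h\in W_Z\le N_W(Z)$; conjugating the relation $c_1\leq_R c$ by $h$, which preserves $l_R$, gives $w\leq_R hch^{-1}$, and $hch^{-1}$ is again a Coxeter element of $W$. Hence $w$ is noncrossing, and the two sets coincide.

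The main obstacle I expect is exactly this last passage, ``from one to all Coxeter elements of $W_Z$'': it rests on the structural input that a parabolic $W_Z$ is a direct product of irreducible well-generated reflection groups, so that Definition~\ref{Defn:Cox elt} and the single-conjugacy-class property transfer to it -- transparent for real $W$, but requiring care in the complex case. A more self-contained alternative is to produce every Coxeter element of $W_Z$ directly as a label: varying the point $[q]$ inside $\orb{\reg{Z}}$ and the auxiliary path used to build $\op{rlbl}$ conjugates $c_1$ by the deck transformations accumulated along the way, and since $B(W_Z)\twoheadrightarrow W_Z$ is surjective one can in this way realize any desired Coxeter element of $W_Z$ as some $\op{rlbl}(y,x_1)$, hence as a factor of a block factorization of $c$, hence noncrossing. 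Either way, once each parabolic Coxeter element is exhibited as a factor of a block factorization of a Coxeter element, the proof is complete.
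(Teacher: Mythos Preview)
Your argument is correct and follows exactly the route the paper indicates (the paper gives no detailed proof, only saying the corollary is an immediate consequence of the trivialization theorem and Proposition~\ref{Prop:labels are parabolic cox elts}). Two minor points are worth noting.

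First, the ``obstacle'' you flag is not one: the corollary must be read existentially---an element is a parabolic Coxeter element iff it lies below \emph{some} Coxeter element---since the universal reading (``$NC(W,c)$ equals the set of parabolic Coxeter elements for every fixed $c$'') is already false in $S_4$: the $3$-cycle $(132)$ is a Coxeter element of the parabolic $S_{\{1,2,3\}}$, yet $(132)^{-1}(1234)=(1342)$ has reflection length $3$, so $(132)\not\leq_R(1234)$. Hence once you reach $w\leq_R hch^{-1}$ with $hch^{-1}$ a Coxeter element of $W$, you are done; the alternative approach you sketch is unnecessary (and note that varying the auxiliary path $\theta$ does nothing anyway, since the paper states $\op{rlbl}$ is independent of it).

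Second, a small imprecision in that same step: Proposition~\ref{Prop:labels are parabolic cox elts} only guarantees that $c_1$ is a Coxeter element of some $W_{Z'}$ with $Z'$ in the $W$-orbit of $Z$, not of $W_Z$ itself, so the conjugator $h$ with $w=hc_1h^{-1}$ should be taken in $W$ rather than in $W_Z$. This is harmless: by Definition~\ref{Defn: parabolics and primitives of type Z} the parabolic Coxeter elements of type $[Z]$ form a single $W$-conjugacy class, and $hch^{-1}$ is still a Coxeter element of $W$.
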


\begin{remark}
In the original paper by Looijenga \cite[Conj.~(3.5)]{looij-complement-bifurc}, the main conjecture is equivalent to the numerological coincidence described in Prop.~\ref{Prop: Numerological coincidence}. It was proven the same year by Deligne (who also credits Tits and Zagier) \cite{deligne-letter-to-looijenga}, using a computer for $E8$ (the year being 1974). The proof was a uniform recursion (rediscovered in \cite[Corol.~3.1]{reading-chains-in-noncrossing}), solved case-by-case.
\end{remark}

\subsubsection{The ramification formula}\label{sec:ramific_form}

An important application of the trivialization theorem is an explicit ramification formula for the $LL$ map (Corollary~\ref{cor:ramif_form_LL}). Recall for a finite morphism $f:\mathbb{C}^n\rightarrow\mathbb{C}^n$ and a point $y\in\mathbb{C}^n$ of the domain, the definition of the {\color{blue} geometric multiplicity $\op{mult}_y(f)$} (also called \emph{ramification degree}) of $f$ at $y$: If $e:=f(y)$ and $e'$ is a generic point close to $e$, then we define $\op{mult}_y(f)$ to be the number of points in the fiber $f^{-1}(e')$ that are close to $y$ (see \cite[\S~7.2]{theo-thesis}). 

For a finite morphism $f:\mathbb{C}^n\rightarrow\mathbb{C}^n$ the sum of the geometric multiplicities $\op{mult}_y(f)$ over all points in some fiber $f^{-1}(e)$ will always equal the degree of the map $f$ (this is a consequence of the flatness of $f$). This is known as the {\color{blue} \emph{ramification formula}} for $f$ (\cite[Ex.~4.3.7]{fulton-intersection-theory} and \cite[Defn.~74]{theo-thesis}):
\begin{equation}
    \op{deg}(f)=\sum_{y\in f^{-1}(e)}\op{mult}_y(f)\quad\quad\text{for any }e\in\mathbb{C}^n.\label{eq:ramific_form}
\end{equation}
In our case, the following proposition relates the geometric multiplicity of $LL$ at some point $y$ with the combinatorics of its label $\op{rlbl}(y)$, and thus gives an explicit version of the ramification formula \eqref{eq:ramific_form}. It is a direct corollary of the trivialization theorem once it is afforded a natural topology that turns the bijection of the statement into a homeomorphism. 

\begin{prop}[{\cite[Proof of Thm.~7.20]{bes-kpi1}}]\ \newline
Let $y\in Y$ be a point in the domain of the $LL$ map and $\op{rlbl}(y)=(c_1,\ldots,c_k)$ its associated label (a block factorization). Then, the geometric multiplicity of $LL$ at $y$ is given by
\[
\op{mult}_y(LL)=\prod_{i=1}^k|\op{Red}_{\mathcal{R}}(c_i)|.
\]
\end{prop}

\begin{cor}[Ramification formula for $LL$]\label{cor:ramif_form_LL}
For any centered configuration $e\in E_n$, we have that
\[
\dfrac{h^nn!}{|W|}=\op{deg}(LL)=\sum_{y\in LL^{-1}(e)}\op{mult}_y(LL)=\sum_{(c_1,\ldots,c_k)}\prod_{i=1}^k|\op{Red}_{\mathcal{R}}(c_i)|,
\]
where the sum ranges over all block factorizations $c_1\cdots c_k=c$ of the Coxeter element $c$ that are compatible with the configuration $e$.
\end{cor}

\subsection{Concordance with the \texorpdfstring{$\mathcal{LL}$}{LL} map on polynomials}\label{sec:LL_on_polys}

In this section we describe how Bessis' definition of the $LL$ map (Defn.~\ref{Defn: LL map}) relates to the classical definition by Looijenga, later used and popularized by Arnold. Originally the $LL$ map was defined on (versal) deformations of simple singularities, as the map that sends the deformation (which is a function from some space $\mathbb{C}^k$ to $\mathbb{C}$) to its set of critical values. Now, simple singularities correspond to simply laced Weyl groups and the type-$A_{n-1}$ case is just the singularity $x^n=0$; a versal deformation for it is any monic, degree-$n$ polynomial. Later on, Arnold \cite{arnold-first-LL-application}, Lando and Zvonkine \cite{LZ-mult-of-LL}, Baines \cite{baines-LL-even-and-odd}, focused just on this type-$A$ case and developed a geometric and combinatorial theory on the $LL$ map. We follow below the presentation from \cite[\S5.1]{LZ-graphs-on-surfaces}.

\begin{defn}
Consider a monic polynomial with complex coefficients $a_i$, whose roots sum to $0$:
\begin{equation}
p(x)=x^n+a_2x^{n-2}+\cdots+a_{n-1}x+a_n.\label{eq:p(x)_for_LL}
\end{equation}
We define the {\color{blue} $\mathcal{LL}$ map on polynomials} on the set of such polynomials $p(x)$ via
\[
{\color{blue}\mathcal{LL}\Big(p(x)\Big)}=\Bigg\{
\genfrac{}{}{0pt}{0}{\text{multiset of critical}}{\text{values of }p(x)} \Bigg\}.
\]
\end{defn}

\begin{remark}
The $\mathcal{LL}$ map is naturally defined on equivalence classes of holomorphic maps; in the case of polynomials, after a M{\"o}bius transformation of the domain $\mathbb{C}$ we may assume that the roots of the polynomial sum to zero (i.e. that $a_1=0$). This does not correspond to the particular expression \eqref{Eq:Discriminant monic} for the discriminant hypersurface but more to the fact that the reflection representation of $S_n$ is the subspace $\sum_{i=1}^nx_i=0$ of $\mathbb{C}^n$.
\end{remark}

To make an explicit connection between Bessis' $LL$ map of Defn.~\ref{Defn: LL map} and the $\mathcal{LL}$ map on polynomials, we need to choose a suitable system of basic invariants for the action of $S_n$ on its reflection representation $V$. In particular, a system that satisfies \eqref{Eq:Discriminant monic} and thus guarantees that the image of the $LL$ map is a \emph{centered} configuration of points. 

We will identify the space $V$ with the subspace $\sum_{i=1}^nx_i=0$ in the ambient space $\mathbb{C}^n$. Now a point $S_n\cdot (r_1,\ldots,r_n)$ in the space $S_n\setminus V$ is naturally identified with the monic polynomial $p(x)$ with set of roots $\{r_1,\ldots,r_n\}$. Notice that $p(x)$ is of the form \eqref{eq:p(x)_for_LL}, since $\sum_{i=1}^n r_i=0$. Recall that the {\color{blue} discriminant $\op{Disc}_x\big(p(x)\big)$} of a polynomial $p(x)$ is a polynomial expression on the coefficients of $p(x)$ that equals $0$ when $p(x)$ has a double root; i.e. when $0$ is a critical value of $p(x)$. If $p(x)$ is monic, with roots $r_1,\ldots, r_n$ then\footnote{This is a symmetric expression in the roots of $p(x)$, hence a polynomial expression on the coefficients of $p(x)$.} $\op{Disc}_x\big(p(x)\big)=\prod_{i<j}(r_i-r_j)^2$, for example: 
\begin{align*}
\op{Disc}_x\big(x^3+a_2x+a_3)&=-27a_3^2-4a_2^3\quad\quad\quad\text{and}\\
\quad\op{Disc}_x\big(x^4+a_2x^2+a_3x+a_4)&=256a_4^3-128a_2^2\cdot a_4^2+(16a_2^4+144a_2a_3^2)\cdot a_4-(4a_2^3a_3^2+27a_3^4).
\end{align*}
We can express the discriminant as a polynomial in the constant term $a_n$, whose coefficients are polynomials in the $a_i$, $i=2,\ldots n-1$. Then, it has degree $n-1$ and starts with the term $(-n)^na_n^{n-1}$.

\begin{defn}\label{defn:LL_map_polys}
For a polynomial $p(x)$ as in \eqref{eq:p(x)_for_LL}, we define {\color{blue} $D\big(p(x)\big)$} as the scaled coefficient:
\[
{\color{blue} D\big(p(x)\big)}:=\dfrac{1}{(-n)^n}\cdot \Big[a_n^{n-2}\Big]\Big( \op{Disc}_x(x^n+a_2x^{n-2}+a_3x^{n-3}+\cdots+a_{n-1}x+a_n\Big).
\]
For example $D(x^3+a_2x+a_3)=0$ and $D(x^4+a_2x^2+a_3x+a_4)=\dfrac{-1}{2}\cdot a_2^2$. 
\end{defn}

The following Proposition gives an interpretation of $D\big(p(x)\big)$ in terms of critical values. For any polynomial $p(x)=x^n+a_2x^{n-2}+\cdots+a_n$, we will write ${\color{blue}p_0(x)}:=x^n+a_2x^{n-2}+\cdots+a_{n-1}x$ for its shift with $0$-constant term.
\begin{prop}\label{prop:D(p(x))_sum_crit}
The function $D\big(p(x)\big)$ equals the sum of the critical values of the polynomial $p_0(x)$.  
\end{prop}
\begin{proof}
If we treat $a_n$ as an unknown, then $\op{Disc}_x\big(p(x)\big)$ becomes a polynomial in $a_n$, every root of which is some value $c$ such that $p_0(x)+c$ has a double root. This means that $0$ is a critical value of $p_0(x)+c$ or equivalently that $-c$ is a critical value of $p_0(x)$.

The sum of the roots of this polynomial is by definition equal to $-D\big(p(x)\big)$, so that $D\big(p(x)\big)$ is then indeed the sum of the critical values of $p_0(x)$.
\end{proof}

\begin{example}
Consider the polynomial $p(x)=x^4-3/2\cdot x^2+x+3$. Then, \[
p'(x)=4x^3-3x+1=(x+1)(2x-1)^2,
\]
so that the critical points of $p(x)$ (and of $p_0(x)$) are $x=1/2$ with multiplicity $2$, and $x=-1$ with multiplicity $1$. The critical values of $p_0(x)$ are $3/16$ with multiplicity $2$, and $-3/2$ with multiplicity $1$. Finally, the sum of the critical values of $p_0(x)$ equals 
\[
\dfrac{3}{16}\cdot 2+\dfrac{-3}{2}\cdot 1=-\dfrac{9}{8}=\dfrac{-1}{2}\cdot (-3/2)^2=D\big(p(x)\big).
\]
\end{example}

We are now ready to explicitly describe the system of basic invariants for $S_n$ that satisfies \eqref{Eq:Discriminant monic}. We write $\bm r:=(r_1,\ldots,r_n)$, with $\sum_{i=1}^nr_i=0$, for the arbitrary point of $V$ and $e_i(\bm r)$ for the $i$-th elementary symmetric function on the $r_i$. We freely identify the point $\bm r$ with the monic polynomial $p(x)=x^n+e_2(\bm r)x^{n-2}+\cdot +(-1)^ne_n(\bm r)$ with roots $r_i$. Since $D\big(p(x)\big)$ is a polynomial on the coefficients of $p(x)$, it will be a symmetric function on the $r_i$. The following collection $\bm f$ of symmetric functions on the $r_i$ is a basic system of invariants for $S_n$, under which the discriminant hypersurface $\Delta(S_n,\bm f)$ takes a form as in \eqref{Eq:Discriminant monic}.
\begin{equation}
f_1(\bm r)=e_2(\bm r),\ldots, f_{n-2}(\bm r)=(-1)^{n-1}e_{n-1}(\bm r)\quad\quad\text{and}\quad\quad f_{n-1}(\bm r)=(-1)^ne_n(\bm r)+\dfrac{D\big(p(x)\big)}{n-1}.\label{eq:basic_invts_LL_poly}   
\end{equation}

The effect of this substitution in the case of $A_3=S_4$ is for instance that (the first equality is the definition of the discriminant in \eqref{eq:discr_prod_aH_eH} which agrees with the definition of the polynomial discriminant)
\[
\Delta(S_n,\bm f)=\prod_{1\leq i<j\leq 4}(r_i-r_j)^2=6912\cdot f_3^3+\big(3888f_1f_2^2-144f_1^4\big)\cdot f_3+(8f_1^6+540f_1^3f_2^2-729 f_2^4),
\]
is now a polynomial in $f_3$ whose coefficient for $f_3^2$ is $0$ as required in \eqref{Eq:Discriminant monic}.

With this selection of a basic system we can easily show that Bessis' $LL$ map of Defn.~\ref{Defn: LL map} models closely the behavior of the $\mathcal{LL}$ map on polynomials from Defn.~\ref{defn:LL_map_polys}.
Before stating the lemma, note that since $f_i(\bm r)=(-1)^{i+1}e_{i+1}(\bm r)$ for $i=1,\ldots,n-2$, we can identify the domain $Y$ of the $LL$ map with the collection of $0$-constant-term polynomials $p_0(x)=x^n+a_2x^{n-2}+\cdots+a_{n-1}x$.

\begin{prop}\label{prop:LL_concord}
With notation from the discussion above, the $LL$ map of Defn.\ref{Defn: LL map} is given via     
\[
LL\Big(p_0(x)\Big)=- \Bigg\{
\genfrac{}{}{0pt}{0}{\text{multiset of critical values of }p_0(x)}{\text{centered at }0} \Bigg\}.
\]
\end{prop}
\begin{proof}
Pick a $0$-constant-term polynomial $p_0(x)$, write $f_1,\ldots,f_{n-2}$ for the first invariants associated to it after \eqref{eq:basic_invts_LL_poly}, and write $d:=D(p_0(x))/(n-1)$ for the mean of its critical values. Now, a point $c\in\mathbb{C}$ belongs to the image $LL\big(p_0(x)\big)$ if $\Delta\big(S_n,(f_1,\ldots,f_{n-2},c)\big)=0$. This would mean that $p_0(x)+c-d$ has a double root, or equivalently it has $0$ as a critical value. This in turn means that $-c+d$ is a critical value of $p_0(x)$ and the statement follows.
\end{proof}

\begin{remark}
Note that the distinction between $0$-constant-term polynomials and arbitrary constant term polynomials in Prop.~\ref{prop:LL_concord} is immaterial: adding a constant to a polynomial only shifts its critical values by the same amount. 
To force the collection of critical values to be centered is slightly less trivial, but as we showed in Prop.~\ref{prop:D(p(x))_sum_crit}, the shift-to-centered is given by a symmetric function on the $r_i$, which is a quasi-homogeneous function of the coefficients of $p_0(x)$.
\end{remark}

Finally, let us note that the classical monodromy construction that produces factorizations in $S_n$ (called constellations in \cite{LZ-graphs-on-surfaces}) involves studying preimages of polynomials around their critical values; this is equivalent to studying the preimages $p^{-1}(0)$ for a \emph{family} of polynomials whose constant terms we vary. The latter interpretation corresponds exactly to the labeling map $\op{rlbl}$ of Section~\ref{Section: The labeling map rlbl} as the $f_n$ invariant of \eqref{eq:basic_invts_LL_poly} is just a shift of the constant term of the polynomial, and the lift of the corresponding braids in \eqref{EQ:1PBW1} is again the permutation of the roots $r_i$. 

The reason that the classical $\mathcal{LL}$ map produces factorizations of the long cycle is that the product of the monodromies around the finite critical values of a polynomial $p(x)$ equals its monodromy around infinity, where $p(x)$ behaves like $x^n$ and hence its monodromy is the long cycle. The long cycle becomes the Coxeter element in reflection groups and relating the monodromies around the finite critical values with those at infinity becomes the content of Definition~\ref{Defn:rlbl_def}.

\section{Primitive factorizations of a Coxeter element}
\label{Section: Primitive factorizations of a Cox elt}

This section is devoted to the proof of the main theorem of this paper, on the number of primitive factorizations of $c$ (Thm.~\ref{Thm: Primitive facto enume}). Its derivation follows a pattern of results from singularity theory, where suitable variants of the $LL$  map are constructed and whose degree computation helps enumerate different combinatorial objects.

A general class of natural enumeration questions, involves counting factorizations with prescribed conjugacy classes of factors. The simplest of those in our setting are the so called {\it \color{blue} primitive factorizations}; that is, block factorizations (as in \eqref{EQ: block factorizations}), of the form $$c=c_1\cdot t_1\cdots t_{n-k},$$ where $c_1$ belongs to a given conjugacy class, whose elements have reflection length $k$, and where the $t_i$'s are reflections. The term \emph{primitive} originated with the work \cite{LZ-mult-of-LL} where natural geometric strata of the discriminant were associated to such factorizations as above (but in the symmetric group $S_n$ only) and where \emph{arbitrary} block factorizations were enumerated after studying the intersections of these \emph{primitive} strata (see Thm.~\ref{Thm: LZ main result}). This same construction of primitive strata will work out for us as well, see Section~\ref{Section: The shadow stratification}, but we will only be able to count primitive factorizations (for arbitrary  block factorizations, the difficulty lies with the intersection theory of our primitive strata which is more complicated, see the discussion after Thm.~\ref{Thm: LZ main result}).

By Corol.~\ref{Corol: parabolic equals noncrossin} we know that all factors in a block factorization of $c$ have to be parabolic Coxeter elements. That is, not all conjugacy classes may appear in a block factorization of $c$, and those that do appear can be indexed by data associated to a flat $Z\in\mathcal{L}_W$. We recall here the concept of \emph{type}, which is essentially due to \cite[above Thm.~6.3]{reiner-athanasiadis-noncrossing-partitions-type-D-2004}, and generalizes the \emph{block sizes} of a partition:

\begin{defn}\label{Defn: parabolics and primitives of type Z}
We say that $c_i$ is a {\it \color{blue} parabolic Coxeter element of type $\orb{Z}$}, for an orbit $\orb{Z}\in W\backslash \mathcal{L}_W$, if $c_i$ is a Coxeter element of some parabolic subgroup $W_{Z'}$ such that $\orb{Z}=\orb{Z'}$. Notice that all such elements form a single conjugacy class in $W$; we write $c_{\orb{Z}}$ for an arbitrary representative.

\noindent Similarly, we say that $c=c_1\cdot t_1\cdots t_{n-k}$ is a {\it \color{blue} primitive factorization of type $\orb{Z}$} and we write $$c=c_{\orb{Z}}\cdot t_1\cdots t_{n-k},$$ if $c_1=c_{\orb{Z}}$ is a parabolic Coxeter element of type $\orb{Z}$. 
\end{defn}

\subsection{Lifting the Lyashko-Looijenga morphism}
\label{Section: Lifting the LL map} 

By Prop.~\ref{Prop:labels are parabolic cox elts}, we know that a point $(y,x_i)\in\mathcal{H}$ may be labeled by a parabolic Coxeter element of type $\orb{Z}$ (i.e. $\op{rlbl}(y,x_i)=c_{\orb{Z}}$) if and only if $(y,x_i)\in\orb{\reg{Z}}$. Therefore, to understand the points $y\in Y$ whose labels contain a factor of type $\orb{Z}$, we must study the restriction of the $LL$ map on the set $$\orb{\reg{Z}}_Y:=\{ y\in Y:\ L_y\cap\orb{\reg{Z}}\neq \emptyset\},$$ which is the projection on $Y$ of the stratum $\orb{\reg{Z}}$.

This might be difficult to do: {\it A priori}, $\orb{\reg{Z}}_Y$ is only a constructible set and we have little control on the ideal of its (Zariski)-closure. Instead we will consider a variant $\widehat{LL}$ of the Lyashko-Looijenga morphism, whose domain is the flat $Z$, and which has a much simpler geometry. We first introduce a generalization of our configuration space $E_n$ that is going to be the natural target of our lifted $\widehat{LL}$ map:

\begin{defn}\label{Defn: decorated config space}
We define the {\it \color{blue} decorated (centered) configuration space $E_{(k,1^{n-k})}$} to be the set of centered configurations of $n$ points in $\CC$, that are further required to include a special (decorated) point of multiplicity at least $k$. That is,
$$E_{(k,1^{n-k})}:=\sym_{n-k}\backslash H_{(k,1^{n-k})}=\sym_{n-k}\backslash\big\{ (\underbrace{x_0,\cdots,x_0}_{k\text{-times}},x_1,\cdots,x_{n-k})\in \CC^n\ | \ k\cdot x_0+\sum_{i=1}^{n-k}x_i=0\big\},$$ where the action of the symmetric group $\sym_{n-k}$ is on the last $n-k$ coordinates.

It is easy to see, via the Vieta formulas again, that $E_{(k,1^{n-k})}\cong \CC^{n-k}$. Indeed, the coefficients of the polynomial $(t-x_1)\cdots (t-x_{n-k})$ completely determine the unordered configuration $\{x_1,\cdots,x_{n-k}\}$ and the {\it centered} condition gives $x_0$. 

We will denote its elements by $\{\widehat{x_0},x_1,\cdots,x_{n-k}\}$. Notice that we are not assuming $x_0$ to be different from the $x_i$'s. As with $E_n$, we will write $\reg{E_{(k,1^{n-k})}}$ for those decorated configurations where $\widehat{x_0}\neq x_i\neq x_j,\ \forall i\neq j$.
\end{defn}

Given an $(n-k)$-dimensional flat $Z\in \mathcal{L}_W$, we may easily express the restrictions of the fundamental invariants $f_i$ on $Z$ as polynomials in a basis ${\bm z}:=(z_1,\cdots,z_{n-k})$ of $Z$. Indeed, we can choose a basis of $V$ that extends ${\bm z}$  and write the $f_i$'s with respect to that basis. Then their restrictions on $Z$ will involve no other variables but the $z_i$'s.

We may therefore parametrize $\orb{Z}_Y$ via $y({\bm z}):=\big(f_1({\bm z}),\cdots,f_{n-1}({\bm z})\big)$. Notice that by treating points in $Y$ as images of points ${\bm z}$, we gain information about the multiset $LL\big( y({\bm z})\big)$. In particular, we know that it contains the point $f_n({\bm z})$ and with multiplicity at least $\op{codim}(Z)=k$ (by Prop.~\ref{Prop:codim=l_R=mult=mult}). We are now ready to introduce the following {\it lift} of the $LL$ map:
\begin{defn}
For an irreducible well-generated complex reflection group $W$ and a flat $Z\in\mathcal{L}_W$, we define the {\it \color{blue} lifted Lyashko-Looijenga} map, denoted\footnote{In what follows, the dependence on the flat $Z$ will be suppressed for ease of notation.} $\widehat{LL}$, by:
\begin{alignat*}{3}
&Z&&\xrightarrow{\displaystyle \quad\widehat{LL}\quad}& &E_{(k,1^{n-k})}\\
{\bm z}:=(z_1,&\cdots&,z_{n-k})&\xrightarrow{\displaystyle \ \quad\quad\quad}&\text{ multiset }LL\big(y({\bm z})\big)\text{ wi}&\text{th the decorated point }\widehat{f_n({\bm z})}.
\end{alignat*}
\end{defn}

\noindent
\setlength{\currentparindent}{\parindent}
\begin{minipage}{0.5\textwidth}
\setlength{\parindent}{\currentparindent}

The diagram on the right describes the relation between $LL$ and $\widehat{LL}$. It is immediate by the definition, that if $F$ is the forgetful map that sends the decorated multiset $\{\widehat{x_0},\cdots,x_{n-k}\}$ to the undecorated one $\{x_0,\cdots,x_{n-k}\}$ (respecting the multiplicity of $x_0$), the diagram commutes. That is, we have 
\begin{equation}F\circ\widehat{LL}=\big(LL\circ\op{pr}_Y\circ\rho\big)|_Z.\label{Eqn: LL and LL^}\end{equation}

\begin{remark}
Notice that $F$ is not in general invertible. If there are several points of multiplicity greater than or equal to $k$, there is no way to know which one was decorated. In fact, we should think of $E_{(k,1^{n-k})}$ as a desingularization (since it is isomorphic to $\CC^{n-k}$) of its image in $E_n$.
\end{remark} 
\end{minipage}\quad
\begin{minipage}{0.45\textwidth}
\begin{tikzpicture}[baseline=(a).base]
\node[scale=1.2] (a) at (0,0){
\begin{tikzcd}
\ \ V\ \ \supset\ Z \arrow[d, "\rho" left, ->,shift left=-5]
  \arrow[d, "\rho", ->,shift left=7]  \rar{\textstyle \quad \widehat{LL} \quad}  & E_{(k,1^{n-k})} \ar[dd, "F"] \\
 W\backslash V \supset \orb{Z} \arrow[d, "\op{pr}_Y" left, ->,shift left=-5]
  \arrow[d, "\op{pr}_Y", ->,shift left=7] \\
\quad \ \ Y\ \ \supset \orb{Z}_Y  \rar{\textstyle \quad LL \quad} & E_n\\ 
\end{tikzcd}
};
\end{tikzpicture}\vspace{-1cm}\captionof{figure}{The lifted Lyashko-\newline Looijenga morphism.}\label{Fig: Lift of the LL map}

\end{minipage}

\subsection{The geometry of the lifted \texorpdfstring{$\widehat{LL}$}{LL} map}
\label{Section:The geometry of the lifted map LL}

Our first step towards understanding the geometry of the $\widehat{LL}$ map will be to give it an explicit description in terms of polynomials. We will need to study the restriction of the discriminant $\big(\Delta(W,{\bm f});(y,t)\big)$ on $\orb{Z}_Y$.

As before, we may view the $\alpha_i$'s as polynomials in ${\bm z}$. The fact that $f_n({\bm z})$ is always a root of the discriminant at $y=y({\bm z})$, and of multiplicity at least $k$, implies that we can factor the latter 
\begin{equation}\label{Eq:Discriminant polyn one}\big(\Delta(W,{\bm f});(y({\bm z}),t)\big)=t^n+\alpha_2\big(y({\bm z})\big)\cdot t^{n-2}+\cdots +\alpha_n\big(y({\bm z})\big),\end{equation} as
\begin{equation}\label{EQ: Discriminant polun two}\big(\Delta(W,{\bm f});({\bm z},t)\big)=\big( t-f_n({\bm z})\big)^k\big(t^{n-k}+b_1({\bm z})\cdot t^{n-k-1}+\cdots + b_{n-k}({\bm z})\big),\end{equation} where the $b_i$'s are {\it a priori} functions of ${\bm z}$. Our first task will be to show that they are in fact polynomials in ${\bm z}$:

\begin{lem}\label{Lemma: b_i's are homg polys in z_i's}
The coefficients $b_i({\bm z})$ that appear in the previous factorization of the discriminant $\big( \Delta(W,{\bm f});({\bm z},t)\big)$ are homogeneous polynomials in the $z_i$'s, of degree $hi$.
\end{lem}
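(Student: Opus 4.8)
The plan is to extract the $b_i(\bm z)$ directly from the Vieta formulas and show they are polynomial by a clearing-denominators argument, then pin down the degree by quasi-homogeneity. First I would observe that, working in the polynomial ring $\CC[\bm z]$, the identity \eqref{EQ: Discriminant polun two} is an identity of polynomials in $t$ with coefficients that are, \emph{a priori}, elements of the fraction field $\CC(\bm z)$. Comparing coefficients of powers of $t$ on both sides of \eqref{Eq:Discriminant polyn one} and \eqref{EQ: Discriminant polun two}, one gets a triangular system expressing each $\alpha_j(y(\bm z))$ as a polynomial (over $\ZZ$) in $f_n(\bm z)$ and $b_1(\bm z),\dots,b_{n-k}(\bm z)$; inverting this triangular system, each $b_i(\bm z)$ is a polynomial over $\QQ$ in $f_n(\bm z)$ and the $\alpha_j(y(\bm z))$. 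Since $f_n(\bm z)$ and each $\alpha_j(y(\bm z))$ lie in $\CC[\bm z]$ (they are restrictions to $Z$ of the honest polynomials $f_n$ and $\alpha_j\circ\rho$), this already shows $b_i(\bm z)\in\CC[\bm z]$, so no divisibility subtlety actually arises.

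Next I would establish homogeneity. The invariants $f_i$ are homogeneous of degree $d_i$ in the ambient coordinates, and the chosen basis $\bm z$ of $Z$ consists of linear forms, so each restriction $f_i(\bm z)$ is homogeneous of degree $d_i$ in the $z_j$; in particular $f_n(\bm z)$ is homogeneous of degree $h$. By Prop.~\ref{Prop: Discriminant is monic}, $\alpha_j\in\CC[f_1,\dots,f_{n-1}]$ is quasi-homogeneous of weighted degree $hj$, hence $\alpha_j(y(\bm z))$ is homogeneous of degree $hj$ in the $z_i$. Therefore the left-hand side of \eqref{EQ: Discriminant polun two}, namely $\big(\Delta(W,\bm f);(\bm z,t)\big)$, is homogeneous of degree $hn$ once we assign $t$ the weight $h$ (the monomial $t^n$ has weight $hn$, and every $\alpha_j(y(\bm z))t^{n-j}$ has weight $hj+h(n-j)=hn$). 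Since $(t-f_n(\bm z))^k$ is homogeneous of weight $hk$ in the same grading, the complementary factor $t^{n-k}+b_1(\bm z)t^{n-k-1}+\cdots+b_{n-k}(\bm z)$ must be homogeneous of weight $h(n-k)$; matching the term $b_i(\bm z)t^{n-k-i}$, which carries weight $\deg(b_i)+h(n-k-i)$, against $h(n-k)$ forces $\deg(b_i)=hi$, as claimed.

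There is genuinely no hard step here — the content is bookkeeping — but the one point that deserves care is the very first claim, that $f_n(\bm z)$ is literally a root of $\big(\Delta(W,\bm f);(y(\bm z)),t\big)$ of multiplicity at least $k$, so that the factorization \eqref{EQ: Discriminant polun two} exists with polynomial (not merely rational, and not merely generically defined) cofactor. This is exactly what the setup preceding the lemma supplies: for every $\bm z\in Z$ the point $(y(\bm z),f_n(\bm z))=\rho(\bm z)$ lies in $\orb{Z}$, hence in $\orb{\reg{Z}}$ for generic $\bm z$, and by Prop.~\ref{Prop:codim=l_R=mult=mult} the multiplicity of $f_n(\bm z)$ in $LL(y(\bm z))$ equals $\op{codim}(Z)=k$; so $(t-f_n(\bm z))^k$ divides $\big(\Delta(W,\bm f);(y(\bm z)),t\big)$ in $\CC[t]$ for generic $\bm z$, and dividing the monic polynomial $\big(\Delta(W,\bm f);(\bm z,t)\big)\in\CC[\bm z][t]$ by the monic polynomial $(t-f_n(\bm z))^k$ in $\CC[\bm z][t]$ always produces a quotient in $\CC[\bm z][t]$ (division by a monic polynomial over a domain stays in the ring) with remainder that vanishes identically because it vanishes on a Zariski-dense set. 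That division remark is the cleanest way to get polynomiality of the $b_i$ and sidesteps any need to invert the triangular system; I would present it as the main argument, with the Vieta/triangular-system description kept as an alternative viewpoint and as the source of the explicit formulas one needs later.
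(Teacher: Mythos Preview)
Your proposal is correct, and your primary route---comparing coefficients to get a triangular system and inverting it, then reading off homogeneity from the known weighted degrees of the $\alpha_j$ and $f_n$---is exactly the paper's proof; your presentation of the homogeneity step via assigning $t$ weight $h$ is a mild repackaging of the same induction.

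The one genuine difference is your alternative argument via division by the monic polynomial $(t-f_n(\bm z))^k$ in $\CC[\bm z][t]$, which the paper does not use. It is indeed a cleaner way to obtain polynomiality of the $b_i$ (and in fact you do not even need the Zariski-density step: for \emph{every} $\bm z\in Z$ the point $\rho(\bm z)$ lies in $[Z']$ for some $Z'\subseteq Z$, so the multiplicity is $\op{codim}(Z')\geq k$ and the remainder is zero on the nose). The trade-off is that the paper's explicit triangular equations are reused immediately afterward---in the Remark following the Corollary---to show that the forgetful map $F$ is algebraic by writing the $\alpha_i$ as polynomials in the $b_j$; your division argument does not produce those formulas, so you would still want to record the Vieta relations somewhere. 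Keeping both, as you suggest, is a reasonable choice.
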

\begin{proof}
Indeed, by comparing coefficients on the right hand sides of \eqref{Eq:Discriminant polyn one} and \eqref{EQ: Discriminant polun two}, we get the following equations:
\begin{alignat*}{1}
0&=kf_n-b_1,\\
\alpha_2&=b_2-kf_nb_1+\binom{k}{2}f_n^2,\\
\alpha_3&=b_3-kf_nb_2+\binom{k}{2}f_n^2b_1-\binom{k}{3}f_n^3,\\
\alpha_4&=\cdots,
\end{alignat*} where $\alpha_i,b_i$ and $f_n$ are all considered as functions on ${\bm z}$. Now, by definition, the $\alpha_i$'s and $f_n$ are polynomials in ${\bm z}$. Moreover, the above equations can be used to inductively express $b_i$ as a polynomial in the $\alpha_j$'s (with $j\leq i$) and $f_n$; therefore as a polynomial in the $z_i$'s. 

The homogeneity is also an immediate consequence of the previous argument. Indeed, the $\alpha_i$'s are weighted-homogeneous in the $f_i$'s, of weighted-degree $hi$ (Prop.~\ref{Prop: Discriminant is monic}). This means precisely that the $\alpha_i$'s are {\it homogeneous in the ${\it z_i}$'s} of degree $hi$. Along with the fact that $\op{deg}(f_n)=h$, this forces (inductively) all monomials that appear in the $i^{\op{th}}$ equation to be homogeneous and of degree $hi$.
\end{proof}

\begin{cor}
The lifted $\widehat{LL}$ map is an algebraic morphism, given explicitly as:
\begin{alignat*}{3}
Z&\cong \CC^{n-k}& & \xrightarrow{\displaystyle \quad\widehat{LL}\quad}\ & E_{(k,1^{n-k})}&\cong \CC^{n-k}\\
{\bm z}:=(z_1,&\cdots,z_{n-k})&&\xrightarrow{\quad \quad \quad \ }\ &\big(b_1(z_1,\cdots,z_{n-k}),&\cdots,b_{n-k}(z_1,\cdots,z_{n-k})\big).
\end{alignat*}
\end{cor}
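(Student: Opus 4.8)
The plan is to read off the coordinate description of $\widehat{LL}$ directly from its definition together with the factorization established in Lemma~\ref{Lemma: b_i's are homg polys in z_i's}. Recall that $\widehat{LL}({\bm z})$ is, by definition, the multiset $LL(y({\bm z}))$ — equivalently, the multiset of roots of $\big(\Delta(W,{\bm f});(y({\bm z}),t)\big)$ — together with the decoration of the point $f_n({\bm z})$. Equation~\eqref{EQ: Discriminant polun two} factors this polynomial as $\big(t-f_n({\bm z})\big)^k\cdot g_{\bm z}(t)$, where $g_{\bm z}(t)=t^{n-k}+b_1({\bm z})t^{n-k-1}+\cdots+b_{n-k}({\bm z})$. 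So the decorated point is $f_n({\bm z})$ with multiplicity exactly $k$ plus however often $f_n({\bm z})$ occurs as a root of $g_{\bm z}$, and the remaining $n-k$ points of the configuration (the ``undecorated'' part) are precisely the roots of $g_{\bm z}(t)$.

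First I would invoke the Vieta identification spelled out in Definition~\ref{Defn: decorated config space}: a decorated centered configuration $\{\widehat{x_0},x_1,\dots,x_{n-k}\}$ is determined by, and determines, the monic degree-$(n-k)$ polynomial $(t-x_1)\cdots(t-x_{n-k})$, since its coefficients recover the unordered tuple $\{x_1,\dots,x_{n-k}\}$ and the centering condition $k\cdot x_0+\sum x_i=0$ then pins down $x_0$. Under this isomorphism $E_{(k,1^{n-k})}\cong\CC^{n-k}$, a decorated configuration is nothing but the coefficient vector of its ``undecorated polynomial''. Applying this to $\widehat{LL}({\bm z})$: the undecorated polynomial is exactly $g_{\bm z}(t)$, whose coefficient vector is $\big(b_1({\bm z}),\dots,b_{n-k}({\bm z})\big)$. (One should note the centering is automatic here: the coefficient $\alpha_1$ of $t^{n-1}$ in $\Delta$ vanishes, which is the origin of the ``centered'' condition in the first place; equivalently the first Vieta relation $0=kf_n-b_1$ is consistent with recovering $x_0=f_n({\bm z})$ from the centering of $g_{\bm z}$.)

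The final move is just to cite Lemma~\ref{Lemma: b_i's are homg polys in z_i's}, which tells us each $b_i({\bm z})$ is an honest (homogeneous, degree $hi$) polynomial in $z_1,\dots,z_{n-k}$ rather than merely a function. Combining the three points — that $\widehat{LL}$ postcomposed with the Vieta isomorphism is ${\bm z}\mapsto\big(b_1({\bm z}),\dots,b_{n-k}({\bm z})\big)$, and that each $b_i$ is polynomial — yields exactly the claimed formula, and in particular shows $\widehat{LL}$ is a morphism of affine varieties $\CC^{n-k}\to\CC^{n-k}$. There is essentially no obstacle: the real content was already extracted in the lemma (the non-obvious polynomiality of the $b_i$), so this corollary is a bookkeeping step that repackages the factorization \eqref{EQ: Discriminant polun two} through the Vieta coordinates on $E_{(k,1^{n-k})}$. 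The only point worth a sentence of care is the compatibility of the decoration with the centering condition, so that the $b_i$'s really are valid coordinates on $E_{(k,1^{n-k})}$ and not on some larger space.
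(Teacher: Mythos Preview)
Your proposal is correct and follows exactly the same approach as the paper: identify the decorated configuration $\widehat{LL}({\bm z})$ with the coefficient vector $(b_1({\bm z}),\dots,b_{n-k}({\bm z}))$ via the Vieta parametrization of Definition~\ref{Defn: decorated config space}, and then invoke Lemma~\ref{Lemma: b_i's are homg polys in z_i's} for the polynomiality. The paper's proof is just a two-sentence version of what you wrote; your extra care about the centering condition is a nice touch but not strictly necessary.
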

\begin{proof}
One can see by the definition of the $b_i$'s, and our choice of parametrization for the space $E_{(k,1^{n-k})}$ as described in Defn.~\ref{Defn: decorated config space}, that the tuple $(b_1,\cdots,b_{n-k})$ represents the decorated multiset $\widehat{LL}({\bm z})$. The algebraicity of the map $\widehat{LL}$ is precisely the previous lemma.
\end{proof}

\begin{remark}
Notice that in the same way, we may show that the forgetful map $F$ is algebraic. Indeed, it is precisely given as ${\bm b}:=(b_1,\cdots,b_{n-k})\xrightarrow{F} \big(\alpha_2({\bm b}),\cdots ,\alpha_n({\bm b})\big)$, where the $\alpha_i$'s are given in terms of ${\bm b}$ according to the equations in the proof of Lemma~\ref{Lemma: b_i's are homg polys in z_i's}.
\end{remark}

We are now in a similar situation as in Section~\ref{Section: The LL map and the trivialization theorem}. Instead of attempting to reproduce all of its statements in the context of the lifted $\widehat{LL}$ map, we focus only on those that are pertinent to the enumerative questions; namely the finiteness and the degree calculation.

\begin{prop}
The lifted $\widehat{LL}$ map is a finite morphism and its degree is given by $$\op{deg}(\widehat{LL})=h^{\op{dim}(Z)}\cdot \big(\op{dim}(Z)\big)!\ .$$
\end{prop}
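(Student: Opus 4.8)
The plan is to mimic the proof of the corresponding facts for the ordinary $LL$ map in Section~\ref{Section: Finiteness of the LL map and quasi-homogeneity}. First, I would observe that $\widehat{LL}$ is quasi-homogeneous: by Lemma~\ref{Lemma: b_i's are homg polys in z_i's}, $b_i$ is homogeneous of degree $hi$ in the $z_j$'s, and the $z_j$'s themselves are linear coordinates on $Z$, so $\widehat{LL}$ is a quasi-homogeneous map $\CC^{n-k}\to\CC^{n-k}$ in the sense needed for Bezout's theorem. To apply the finiteness criterion \cite[Prop.~46]{theo-thesis}, it then suffices to show $\widehat{LL}^{-1}({\bm 0})=\{{\bm 0}\}$.

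For the fiber over the origin, suppose $\widehat{LL}({\bm z})={\bm 0}$, i.e. $b_1({\bm z})=\cdots=b_{n-k}({\bm z})=0$. Then by \eqref{EQ: Discriminant polun two} the discriminant polynomial at ${\bm z}$ is $\big(t-f_n({\bm z})\big)^k\cdot t^{n-k}$. I want to deduce that $f_n({\bm z})=0$, so that the multiset is $\{0,\dots,0\}$ and hence $LL\big(y({\bm z})\big)={\bm 0}$, which forces $y({\bm z})={\bm 0}$ by the finiteness of $LL$ already established in \S\ref{Section: Finiteness of the LL map and quasi-homogeneity}; then ${\bm z}$ lies over the origin, and since ${\bm z}\in Z\subset V$ and $\rho^{-1}({\bm 0})=\{{\bm 0}\}$ (the origin is the only point of $V$ fixed by all of $W$ — or simply because $\rho$ is finite), we get ${\bm z}={\bm 0}$. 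The point needing care is ruling out $f_n({\bm z})\neq 0$: in that case the multiset $LL(y({\bm z}))$ would be $\{f_n({\bm z})\text{ with mult }k,\ 0\text{ with mult }n-k\}$, which has two distinct points when $k<n$, and I must explain why this cannot happen — namely, the decorated point $f_n({\bm z})$ lies in $\orb{\reg Z}$ with $\op{codim}(Z)=k$, and the centering condition $k f_n({\bm z})+\sum x_i = k f_n({\bm z}) = 0$ (all other roots being $0$) already gives $f_n({\bm z})=0$ directly. So the centering constraint does the work cleanly and I would lead with that.

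Once finiteness is in hand, the degree follows from Bezout's theorem \eqref{Eq: Bezout's theorem}: with $\dim Z = n-k$,
\begin{equation*}
\op{deg}(\widehat{LL})=\frac{\prod_{i=1}^{n-k}\op{deg}(b_i)}{\prod_{i=1}^{n-k}\op{deg}(z_i)}=\frac{\prod_{i=1}^{n-k}(hi)}{\prod_{i=1}^{n-k}1}=h^{n-k}\cdot(n-k)!=h^{\op{dim}(Z)}\cdot\big(\op{dim}(Z)\big)!\,,
\end{equation*}
using that each $z_i$ has weight $1$ and each $b_i$ has weight $hi$ by Lemma~\ref{Lemma: b_i's are homg polys in z_i's}.

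I expect the only genuine obstacle to be the verification that $\widehat{LL}^{-1}({\bm 0})=\{{\bm 0}\}$, and within that, the bookkeeping that identifies a point of $Z$ with its image in $Y$ correctly — i.e. checking that the parametrization ${\bm z}\mapsto y({\bm z})$ together with $\rho$ being finite lets one transfer the known fiber computation for $LL$ back to $Z$. Everything else (quasi-homogeneity, the Bezout computation) is a formal consequence of Lemma~\ref{Lemma: b_i's are homg polys in z_i's} and the cited criterion. One should also double-check the degenerate boundary case $k=n$ (i.e. $Z=\{{\bm 0}\}$), where $\widehat{LL}$ is a constant map $\CC^0\to\CC^0$ of degree $1=h^0\cdot 0!$, consistent with the formula.
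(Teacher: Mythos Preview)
Your proposal is correct and follows essentially the same route as the paper: reduce finiteness to $\widehat{LL}^{-1}({\bm 0})=\{{\bm 0}\}$, then chase back through $LL^{-1}({\bm 0})=\{{\bm 0}\}$ and $\rho^{-1}({\bm 0})=\{{\bm 0}\}$, and compute the degree via Bezout. The only cosmetic difference is that the paper phrases the step ``$f_n({\bm z})=0$'' as $F^{-1}({\bm 0})=\{{\bm 0}\}$ in the commutative diagram, whereas you use the centering relation $b_1=kf_n$ directly --- these are the same observation (it is exactly the first equation in the proof of Lemma~\ref{Lemma: b_i's are homg polys in z_i's}).
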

\begin{proof}
As $\widehat{LL}$ is homogeneous, we may apply the same criterion for finiteness as in Section~\ref{Section: Finiteness of the LL map and quasi-homogeneity}. In order to show that $\big(\widehat{LL}\big)^{-1}({\bm 0})={\bm 0}$, we rely on the connection with the $LL$ map, as described in Fig.~\ref{Fig: Lift of the LL map}. 

To begin with, notice that ${\bm 0}\in E_{(k,1^{n-k})}$ represents the multiset with $n$ copies of $0$, where the (unique) element $0$ is decorated. Of course, ${\bm 0}\in E_n$ is the same multiset without the decoration. Now, it is easy to see that $F^{-1}({\bm 0})={\bm 0}$ (this just says that a multiset with a single element can only be decorated in one way).

Therefore, if $\big( \widehat{LL}\big) ^{-1}({\bm 0})\neq {\bm 0}$, this implies that $\big(\widehat{LL}\big)^{-1}\circ F^{-1}({\bm 0})\neq {\bm 0}$, which is the same as $$ \rho^{-1}\circ\op{pr}_Y^{-1}\circ LL^{-1}({\bm 0})\neq{\bm 0},$$ according to \eqref{Eqn: LL and LL^} (here the notation is again as in Fig.~\ref{Fig: Lift of the LL map}). But we have shown already (see Section~\ref{Section: Finiteness of the LL map and quasi-homogeneity}) that $LL^{-1}({\bm 0})={\bm 0}$, and then $\op{pr}_Y^{-1}({\bm 0})={\bm 0}$, because $L_{\bm 0}$ intersects $\mathcal{H}$ only at the origin, and finally $\rho^{-1}({\bm 0})={\bm 0}$, because ${\bm 0}$ is the unique point in $Z$ fixed by the whole group $W$. That is, we must have $\big(\widehat{LL}\big)^{-1}({\bm 0})={\bm 0}$.

Our degree calculation is the same as in \eqref{EQ: degree of LL via Bexout}. Bezout's theorem gives us the formula: $$\op{deg}(\widehat{LL})=\prod_{i=1}^{n-k}\op{deg}(b_i)=\prod_{i=1}^{n-k}hi=h^{n-k}\cdot (n-k)!\ ,$$and since $\op{dim}(Z)=n-k$, the proof is complete.
\end{proof}

\subsection{Enumeration of Primitive factorizations}
\label{Section:Enumeration of primitive factorizations}

We would like now to apply this geometric analysis of the $\widehat{LL}$ map to our enumerative problem. First we will use the trivialization theorem to phrase the question in terms of the local geometry of the $LL$ map, which then we will reduce to a simpler problem for the $\widehat{LL}$ map.

Recall how Bessis' trivialization theorem (Thm.~\ref{thm: trivialization}) relates the fibers of the $LL$ map with compatible block factorizations. To enumerate {\it primitive} factorizations, we consider a special (centered) multiset $e\in E_n$ whose leftmost point is of multiplicity $k$ and whose other points are simple. For instance, pick \begin{equation}e:=\left(\underbrace{\frac{-1}{k}\binom{n-k+1}{2},\cdots,\frac{-1}{k}\binom{n-k+1}{2}}_{k-\text{times}},1,\cdots,n-k\right).\label{EQ: choice of e in E_n to be lifted}\end{equation} Now the trivialization theorem implies the following Lemma:

\begin{lem}\label{Lemma: FACT_Z and size of fiber}
Let $W$ be an irreducible, well-generated complex reflection group, $Z$ one of its flats, and let $e$ be as above. Then, if $\op{FACT}_W(Z)$ denotes the number of primitive factorizations of type $\orb{Z}$:
$$\op{FACT}_W(Z)=\# \big\{ LL^{-1}(e)\cap \orb{Z}_Y\big\}.$$
\end{lem}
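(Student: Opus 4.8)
The strategy is to use the trivialization theorem (Thm.~\ref{thm: trivialization}) to translate the count of primitive factorizations into a count of points in a fiber, and then to identify that fiber with the part of $LL^{-1}(e)$ lying over $\orb{Z}_Y$. Recall that for a configuration $e\in E_n$ with exactly $k+1$ distinct points (one of multiplicity $k$, the rest simple), a compatible block factorization $\sigma\in D_\bullet(c)$ paired with $e$ in $E_n\boxtimes D_\bullet(c)$ must have exactly $k+1$ factors, with length multiset $(k,1,\dots,1)$: one factor $w$ of reflection length $k$ and $n-k$ reflections. By Prop.~\ref{Prop:codim=l_R=mult=mult}, the factor $w$ of length $k$ corresponds to the point of $LL(y)$ of multiplicity $k$; with our choice of $e$ in \eqref{EQ: choice of e in E_n to be lifted}, that point is the leftmost one, so $w$ is the first factor, i.e.\ the factorization is of the form $c = w\cdot t_1\cdots t_{n-k}$ with $w$ of length $k$ and the $t_i$ reflections. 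These are precisely the primitive factorizations (Defn.~\ref{Defn: parabolics and primitives of type Z}) \emph{before} restricting the conjugacy class of $w$.

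\textbf{Key steps.} First I would invoke the trivialization theorem to get a bijection between $LL^{-1}(e)$ and the set of block factorizations $\sigma$ compatible with $e$; by the multiplicity discussion above, this set is exactly the set of factorizations $c=w\cdot t_1\cdots t_{n-k}$ with $l_R(w)=k$ and each $t_i$ a reflection, i.e.\ the union over all orbits $\orb{Z'}$ of codimension $k$ of the primitive factorizations of type $\orb{Z'}$. Second, I would cut this fiber according to the flat: for $y\in LL^{-1}(e)$, the leftmost point $x_0$ of $LL(y)=e$ lifts to a point $(y,x_0)\in\mathcal{H}$, and by Prop.~\ref{Prop:codim=l_R=mult=mult} there is a unique flat orbit $\orb{Z'}$ (of codimension $k$) with $(y,x_0)\in\orb{\reg{Z'}}$; moreover by Prop.~\ref{Prop:labels are parabolic cox elts} the label $w=\op{rlbl}(y,x_0)$ is a Coxeter element of $W_{Z'}$, hence a parabolic Coxeter element of type $\orb{Z'}$. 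So the trivialization bijection $LL\times\op{rlbl}$ matches, for each fixed orbit $\orb{Z}$ of codimension $k$, the set $\{y\in LL^{-1}(e): (y,x_0)\in\orb{\reg{Z}}\}$ with the set of primitive factorizations of type $\orb{Z}$. Third, I would check that $\{y\in LL^{-1}(e): (y,x_0)\in\orb{\reg{Z}}\}$ is exactly $LL^{-1}(e)\cap\orb{Z}_Y$: the condition $(y,x_0)\in\orb{\reg{Z}}$ says $y\in\orb{\reg{Z}}_Y$ with the decorated point realizing the flat; but since $e$ has only the one point of multiplicity $k$ and all others simple, \emph{any} $y\in LL^{-1}(e)$ meeting $\orb{Z}_Y$ at all must meet it precisely at $(y,x_0)$ (the simple points have multiplicity $1$, hence lie on codimension-$1$ strata only, by Prop.~\ref{Prop:codim=l_R=mult=mult} and Lemma~\ref{Lemma: mult=codim}), so $LL^{-1}(e)\cap\orb{Z}_Y$ coincides with the set in question. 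Counting gives $\op{FACT}_W(Z)=\#\{LL^{-1}(e)\cap\orb{Z}_Y\}$.

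\textbf{Main obstacle.} The delicate point is step three: one must be sure that for $y\in LL^{-1}(e)$, the flat orbit attached to $y$ via the decorated point is well-defined and that ``$y$ lies in $\orb{Z}_Y$'' is equivalent to ``the decorated point of $LL(y)$ lies in $\orb{\reg{Z}}$''. This uses that $\orb{Z}_Y$ is the projection of $\orb{\reg{Z}}$ and that a generic slice $L_y$ with $y\in LL^{-1}(e)$ can meet $\orb{Z}$ (codimension $k$) only along the fat point $x_0$, never along one of the simple points $1,\dots,n-k$ — which follows since those have $LL$-multiplicity $1$ and, by Prop.~\ref{Prop:codim=l_R=mult=mult} together with Lemma~\ref{Lemma: mult=codim}, lie in strata of codimension $1$ only. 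One should also note the harmless redundancy that several $y$'s in the fiber may have the same attached $\orb{Z}$ but different actual flats $Z'$ in the orbit; this does not affect the count since all such $Z'$ contribute to $\op{FACT}_W(Z)$ by definition. Everything else is a direct bookkeeping translation under the trivialization bijection.
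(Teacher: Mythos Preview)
Your argument is correct and follows essentially the same route as the paper's proof: invoke the trivialization theorem to identify $LL^{-1}(e)$ with all primitive factorizations having a first factor of length $k$, then use Prop.~\ref{Prop:labels are parabolic cox elts} and Prop.~\ref{Prop:codim=l_R=mult=mult} to see that restricting to $\orb{Z}_Y$ picks out exactly those of type $\orb{Z}$ (the paper phrases your ``step three'' as the observation that $LL^{-1}(e)\cap\orb{Z}_Y=LL^{-1}(e)\cap\orb{\reg{Z}}_Y$). One small slip: the configuration $e$ has $n-k+1$ distinct points, not $k+1$; this does not affect the rest of your argument, which correctly counts $n-k+1$ factors.
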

\begin{proof}
Indeed, the bijectivity of the map $LL\times \op{rlbl}:Y\rightarrow E_n\boxtimes D_{\bullet}$ implies that the size of (number of distinct points in) the fiber $LL^{-1}(e)$ is equal to the total number of primitive factorizations of all possible types $[Z]$ with $\dim(Z)=n-k$; namely, all factorizations $c=c_1\cdot t_1\cdots t_{n-k}$, where $c_1$ is only required to satisfy $l_{\mathcal{R}}(c_1)=k$.

Now, as we discussed at the beginning of Section~\ref{Section: Lifting the LL map}, primitive factorizations of type $\orb{Z}$ may only appear as labels of points in $\orb{\reg{Z}}_Y$. However, it is easy to see that $LL^{-1}(e)$ intersects $\orb{Z}_Y$ only at $\orb{\reg{Z}}_Y$. This is a consequence of Prop.~\ref{Prop:codim=l_R=mult=mult} and  concludes the proof. 
\end{proof}

In the previous Lemma~\ref{Lemma: FACT_Z and size of fiber}, we  expressed the number of primitive factorizations of type $[Z]$, as in Defn.~\ref{Defn: parabolics and primitives of type Z}, in terms of the fiber $LL^{-1}(e)\cap\orb{Z}_Y$. In the following theorem, we relate the size of the fiber $LL^{-1}(e)\cap\orb{Z}_Y$ with the degree of the $\widehat{LL}$ map, to obtain a closed, uniform enumeration formula:

\begin{thm}\label{Thm: Primitive facto enume}
Let $W$ be an irreducible, well-generated, complex reflection group and let $Z$ be one of its flats. Then, the number of primitive factorizations of type $\orb{Z}$ is given by the formula $$\op{FACT}_W(Z)=\dfrac{h^{\op{dim}(Z)}\cdot \big(\op{dim}(Z)\big)!}{[N_W(Z):W_Z]},$$ where $N_W(Z)$ and $W_Z$ are, respectively, the setwise and pointwise stabilizers of $Z$.
\end{thm}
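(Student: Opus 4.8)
The plan is to combine the two degree computations we already have---$\op{deg}(\widehat{LL})=h^{\op{dim}(Z)}\cdot(\op{dim}(Z))!$ from the previous proposition and Lemma~\ref{Lemma: FACT_Z and size of fiber} expressing $\op{FACT}_W(Z)$ as $\#\{LL^{-1}(e)\cap\orb{Z}_Y\}$---by showing that the fiber of $\widehat{LL}$ over the appropriate decorated configuration $\widehat{e}$ maps onto $LL^{-1}(e)\cap\orb{Z}_Y$ with uniform fiber size $[N_W(Z):W_Z]$. First I would lift the chosen $e$ of \eqref{EQ: choice of e in E_n to be lifted} to the decorated configuration $\widehat{e}\in E_{(k,1^{n-k})}$ whose decorated point is the multiplicity-$k$ point and whose simple points are $1,\dots,n-k$; this is a regular value of $\widehat{LL}$, since its points are as separated as possible and $\widehat{e}\in\reg{E_{(k,1^{n-k})}}$, so $\#\widehat{LL}^{-1}(\widehat{e})=\op{deg}(\widehat{LL})=h^{\op{dim}(Z)}(\op{dim}(Z))!$ by finiteness of $\widehat{LL}$ and the fact that a finite flat morphism has generic fiber of size equal to its degree.

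Next I would analyze the composite $Z\xrightarrow{\rho}\orb{Z}\xrightarrow{\op{pr}_Y}\orb{Z}_Y$ appearing in Figure~\ref{Fig: Lift of the LL map}. The key observation is that this composite realizes $\orb{Z}_Y$ (more precisely its relevant open part) as a quotient of $Z$: two points ${\bm z},{\bm z}'\in\reg{Z}$ have the same image in $Y$ exactly when they lie in the same $W$-orbit, and since $Z$ is a flat, the elements of $W$ carrying $\reg{Z}$ to itself are exactly $N_W(Z)$, while those fixing $\reg{Z}$ pointwise are $W_Z$; hence $\reg{Z}\to\orb{\reg{Z}}_Y$ is a covering of degree $[N_W(Z):W_Z]$ (one uses here that the $N_W(Z)/W_Z$-action on $\reg{Z}$ is free, which follows because any element of $N_W(Z)$ fixing a point of $\reg{Z}$ lies in the pointwise stabilizer of that point, which by Steinberg's theorem is generated by reflections through hyperplanes containing it, and for a point of $\reg{Z}$ those are precisely the reflections in $W_Z$). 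Combining this with the commuting square $F\circ\widehat{LL}=(LL\circ\op{pr}_Y\circ\rho)|_Z$ of \eqref{Eqn: LL and LL^} and the fact (noted in the proof of the finiteness proposition) that $F^{-1}(e)=\{\widehat{e}\}$ over configurations with a unique point of multiplicity $\geq k$, I get that $\widehat{LL}^{-1}(\widehat{e})$ surjects onto $LL^{-1}(e)\cap\orb{\reg{Z}}_Y$ with every fiber of cardinality $[N_W(Z):W_Z]$. Therefore
\[
\op{FACT}_W(Z)=\#\big\{LL^{-1}(e)\cap\orb{Z}_Y\big\}=\frac{\#\widehat{LL}^{-1}(\widehat{e})}{[N_W(Z):W_Z]}=\frac{h^{\op{dim}(Z)}\cdot(\op{dim}(Z))!}{[N_W(Z):W_Z]},
\]
using Lemma~\ref{Lemma: FACT_Z and size of fiber} for the first equality.

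I expect the main obstacle to be the bookkeeping at the boundary: one must be sure that no points of $\widehat{LL}^{-1}(\widehat{e})$ or of $LL^{-1}(e)$ fall outside the ``regular'' open loci where the covering degree $[N_W(Z):W_Z]$ is exactly attained---that is, that $\widehat{e}$ is genuinely a regular value of $\widehat{LL}$ (no unexpected collisions of the simple points with each other or with the decorated point in any fiber) and that $LL^{-1}(e)\cap\orb{Z}_Y\subseteq\orb{\reg{Z}}_Y$. The latter is handled by Lemma~\ref{Lemma: FACT_Z and size of fiber} via Prop.~\ref{Prop:codim=l_R=mult=mult} (a point of $\orb{Z}_Y\setminus\orb{\reg{Z}}_Y$ would force an extra coincidence among the $x_i$ in $LL(y)$, contradicting that $e$ has exactly one fat point); the former follows because $\op{deg}(\widehat{LL})$ counts precisely the generic fibers and $\widehat{e}\in\reg{E_{(k,1^{n-k})}}$ is generic, so the openness/flatness of $\widehat{LL}$ (inherited from finiteness, exactly as for $LL$ in \S\ref{Section: Finiteness of the LL map and quasi-homogeneity}) guarantees the fiber has full size and consists of simple points. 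The only genuinely new ingredient beyond the machinery already assembled is the free action of $N_W(Z)/W_Z$ on $\reg{Z}$, which as indicated is a short consequence of Steinberg's theorem, and the identification of that action as the deck group of $\reg{Z}\to\orb{\reg{Z}}_Y$.
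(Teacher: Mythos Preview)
Your approach is essentially the paper's: lift $e$ through both sides of the commutative square of Figure~\ref{Fig: Lift of the LL map}, identify $\#\widehat{LL}^{-1}(\widehat{e})=\op{deg}(\widehat{LL})$, and recover the overcounting factor $[N_W(Z):W_Z]$ from the map $\reg{Z}\to\orb{\reg{Z}}_Y$.

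There is one inaccuracy worth fixing. You assert that two points $\bm z,\bm z'\in\reg{Z}$ have the same image in $Y$ exactly when they lie in the same $W$-orbit, and hence that $\reg{Z}\to\orb{\reg{Z}}_Y$ is a covering of degree $[N_W(Z):W_Z]$. This is not true in general: the projection $\op{pr}_Y$ forgets $f_n$, so if the slice $L_y$ meets $\orb{\reg{Z}}$ in more than one point (i.e.\ $LL(y)$ has two distinct points of multiplicity exactly $k$), then $\reg{Z}$ contains points with the same $Y$-image but in different $W$-orbits. What is true, and what the paper argues, is that this cannot happen \emph{over the specific} $e$: since $e$ has a unique point of multiplicity $\geq k$, Lemma~\ref{Lemma: mult=codim} and Prop.~\ref{Prop:codim=l_R=mult=mult} force $\big(\op{pr}_Y|_{[Z]}\big)^{-1}(y)$ to be a single point for each $y\in LL^{-1}(e)\cap\orb{Z}_Y$. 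So the composite has fibers of size $[N_W(Z):W_Z]$ over $LL^{-1}(e)$, but for the reason that $\op{pr}_Y$ is injective there and $\rho$ contributes the index, not because the composite is globally an $[N_W(Z):W_Z]$-cover. Once you separate the two maps as the paper does, your argument goes through verbatim.

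On the regularity of $\widehat{e}$: your justification that $\widehat{e}\in\reg{E_{(k,1^{n-k})}}$ ``is generic'' is not quite an argument (the unramified locus of $\widehat{LL}$ could a priori be a proper open subset of $\reg{E_{(k,1^{n-k})}}$). The paper sidesteps this with a WLOG: the unramified locus and $\reg{E_{(k,1^{n-k})}}$ are both nonempty open in the irreducible space $E_{(k,1^{n-k})}$, hence intersect, and nothing in the argument uses anything about $e$ beyond the multiplicities of its points, so one may replace $e$ by a configuration whose lift lies in that intersection.
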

\begin{proof}
To prove the theorem, we will lift the special multiset $e\in E_n$ (as in \eqref{EQ: choice of e in E_n to be lifted}) all the way to the flat $Z$, following both paths described in Fig.~\ref{Fig: Lift of the LL map}, and then we will compare the two fibers.

\vspace{0.1cm} \noindent $\bm{ \big( \rho|_Z \big)^{-1}\circ\big( \textbf{pr}_Y|_{[Z]}\big)^{-1}\circ \big( LL|_{[Z]_Y} \big)^{-1}(e)}$: The application of the first inverse map gives us precisely the set on the right hand side of Lemma~\ref{Lemma: FACT_Z and size of fiber} above. For the second map, recall first that $\op{pr}_Y^{-1}(y)$ is by definition equal to the intersection $L_y\cap\mathcal{H}$, whose $f_n$ coordinates are recorded in $LL(y)$. 

Now, by Lemma~\ref{Lemma: mult=codim} the restricted preimage $\op{pr}_Y^{-1}(y)\cap [Z]$ can only contain points $(y,x)$ such that $\op{mult}_{(y,x)}(\mathcal{H})\geq k$. By Prop.~\ref{Prop:codim=l_R=mult=mult} we must then have that $\op{mult}_x\big( LL(y)\big)\geq k$. But since $LL(y)=e$ and $\frac{-1}{k}\binom{n-k+1}{2}$ is the {\it unique} element in $e$ of multiplicity $k$, the preimage $\big( \op{pr}_Y|_{[Z]}\big)^{-1}(y)$ consists of the single point $\big(y,\frac{-1}{k}\binom{n-k+1}{2}\big)$. 

For the quotient map $\rho$, notice to begin with that any $(y,x)\in\orb{Z}$ such that $LL(y)=e$ must belong to $\orb{\reg{Z}}$. Indeed, if that were not the case, there would be a point in $LL(y)$ of multiplicity greater than $k$. Now, by definition, each fiber of $\rho$ over $\orb{\reg{Z}}$ is a maximal set of points in $\reg{Z}$ that are in the same $W$-orbit. The size of such a set is counted precisely by the index $[N_W(Z):W_Z]$.

Putting the previous three paragraphs together, we have the following relation between fibers: $$\#\big\{ \big(\rho|_Z\big)^{-1}\circ\big(\op{pr}_Y|_{[Z]}\big)^{-1}\circ \big(LL|_{[Z]_Y}\big)^{-1}(e) \big\}=[N_W(Z):W_Z]\cdot \# \big\{ LL^{-1}(e)\cap \orb{Z}_Y\big\}.$$

\vspace{0.1cm}\noindent $\bm{\big(\widehat{LL}\big)^{-1}\circ F^{-1}(e)}$: Since there is a single point of multiplicity $k$ in $e$, this must have been the one that was decorated. That is, $F^{-1}(e)$ contains a single configuration, which we denote by $\widehat{e}$.

For $\widehat{LL}$, recall that a finite morphism is {\it unramified} (i.e. the number of preimages of a point equals the degree of the map) over a (Zariski)-open set. In our case, it is clear that $\reg{E_{(k,1^{n-k})}}$ is open in $E_{(k,1^{n-k})}$ (actually both in the Zariski and the complex topology). 

Therefore, it will intersect any other open set (since $E_{(k,1^{n-k})}$ is irreducible) and, in fact, it will intersect the set where $\widehat{LL}$ is unramified. Without loss of generality, we may assume that $\widehat{e}$ is in that intersection (since in dealing with $e$, we haven't relied on anything but the multiplicities of its elements). This means that:
$$\#\big\{ \big(\widehat{LL}\big)^{-1}\circ F^{-1}(e)\big\}=\op{deg}(\widehat{LL})=h^{\op{dim}(Z)}\cdot \big( \op{dim}(Z)\big)!\ .$$

Putting together the last two equalities and combining them with \eqref{Eqn: LL and LL^}, we immediately get $$\#\big\{ LL^{-1}(e)\cap\orb{Z}_Y\big\}=\frac{h^{\op{dim}(Z)}\cdot \big( \op{dim}(Z)\big)!}{[N_W(Z):W_Z]},$$ which, after Lemma~\ref{Lemma: FACT_Z and size of fiber}, is exactly what we need.
\end{proof}

\begin{remark}
One can compute the numbers $[N_W(Z):W_Z]$ for the exceptional groups, using the tables in \cite[Appendix~C]{orlik-terao-arrangements-of-hyperplanes}. They also appear explicitly in literature; for the real case see \cite[Tables~III-VIII]{orlik-solomon-coxeter-arrangements-symposia}, while for the complex ones see \cite[Tables~3-11]{orlik-solomon-arrangements-defined-by-unitary}.
\end{remark}

\begin{remark}\label{Remark:Eval at H and H'}
Notice that this proof essentially works for the ambient flat $Z=V$ as well. In that case, we will have a centered configuration of $n$-many points that is decorated at the point $f_n({\bm v})$ which might not belong to them. In fact, the first coordinate of the $\widehat{LL}$ map will be $f_n$ now, since the relation with $b_1$ is $0\cdot f_n=b_1$.

More interesting is what happens if we set $Z=H$, for some reflecting hyperplane $H$. We may consider representatives $H_1,\ldots, H_r$ of the $W$-orbits of hyperplanes (for irreducible well-generated complex reflection groups $W$, we must have $r\leq 2$ but we won't need this). Then, it is clear that
\[
\sum_{i=1}^r\op{FACT}_W(H_i)=|\op{Red}_{\mathcal{R}}(c)|=\dfrac{h^nn!}{|W|}.
\]
Comparing this with the formula of Thm.~\ref{Thm: Primitive facto enume}, we get the equation
\[
hn=\sum_{i=1}^r\dfrac{|W|}{[N_W(H_i):W_{H_i}]}=\sum_{i=1}^r[W:N_W(H_i)]\cdot |W_{H_i}|=\sum_{H\in\mathcal{A}}|W_H|,
\]
where in the last summation recall that $\mathcal{A}$ denotes the set of all reflection \emph{hyperplanes} of $W$. Moreover, relying on the fact that the stabilizers $W_H$ are cyclic (when $H$ is a hyperplane), it is easy to see that $\displaystyle \sum_{H\in\mathcal{A}}|W_H|=N+N^*$, so that the above calculation recovers the formula $hn=N+N^*$ as in Defn.~\ref{Defn:Cox elt}.
\end{remark}

\subsection{A parabolic recursion on Coxeter numbers}\label{ssec:parabolic_recursion_cox_nums}

After Lemma~\ref{Lemma: FACT_Z and size of fiber} and Thm.~\ref{Thm: Primitive facto enume}, we can give an explicit version in \eqref{eq:ramif_form_parab_rec} of the ramification formula (see Section~\ref{sec:ramific_form}) of the $LL$ map for the special point $e$ of \eqref{EQ: choice of e in E_n to be lifted}. This in turn will allow us to prove Thm.~\ref{thm:parab_rec_cox} below, which is a recursion relating the Coxeter number of $W$ and those of its parabolic subgroups. 

The recursion of Thm.~\ref{thm:parab_rec_cox} is a generalization of the discussion of Remark~\ref{Remark:Eval at H and H'} but before we state it we must first introduce a useful notation for Coxeter numbers of \emph{reducible} reflection groups. A reducible reflection group is always a direct product of irreducibles $W=W_1\times\cdots \times W_k$ and we define the {\color{blue} multiset of Coxeter numbers of $W$} as the collection of the Coxeter numbers $h_i$ of $W_i$, with each $h_i$ appearing with multiplicity equal to $\op{rank}(W_i)$. For instance $\{h_i(B_3\times A_3)\}=\{6,6,6,4,4,4\}$ since the Coxeter number of $B_3$ is $6$, and that of $A_3=S_4$ is $4$. Even if $W$ is irreducible, when we refer to the \emph{multiset} of its Coxeter numbers, we are considering them with multiplicities; for example $\{h_i(B_3)\}=\{6,6,6\}$.

The following theorem appeared first as \cite[Thm.~4]{theo_chapuy_jucys} with a uniform proof for all irreducible complex reflection groups $W$, relying on a spectral study of a Laplacian operator associated to $\mathcal{A}_W$. The following proof is currently uniform only for real reflection groups because it relies on the trivialization theorem, but it sheds a different light on the theorem realizing it via the ramification formula.

\begin{thm}[Parabolic recursion of Coxeter numbers via the ramification formula of $LL$]\label{thm:parab_rec_cox} For any irreducible, well generated, complex reflection group $W$ of rank $n$ and Coxeter number $h$, we have that
\[
(h+t)^n=\sum_{Z\in\mathcal{L}_W}\Big(\prod_{i=1}^{\op{codim}(Z)}h_i(W_Z)\Big)\cdot t^{\op{dim}(Z)},
\]
where $\{h_i(W_Z)\}$ denotes the multiset of Coxeter numbers for $W_Z$ as described above.
\end{thm}
\begin{proof}
In the case of the special point $e$ of \eqref{EQ: choice of e in E_n to be lifted}, the ramification formula (Corol.~\ref{cor:ramif_form_LL}) for $LL$ becomes
\[
\dfrac{h^nn!}{|W|}=\op{deg}(LL)=\sum_{[Z]\in\mathcal{L}_W/W\atop \dim(Z)=n-k}\#\{LL^{-1}(e)\cap [Z]_Y\}\cdot |\op{Red}_{\mathcal{R}}(c_Z)|, 
\]
where $c_Z$ is any Coxeter element in $W_Z$ and $k$ is the multiplicity of the unique non-simple point of $e$. Now, if $W_Z$ decomposes as a product of irreducibles as $W_Z=W_1\times\cdots\times W_r$, and we write $c_Z=c_1\cdots c_r$ for the respective factorization of $c$ (note that $l_{\mathcal{R}}(c_Z)=k$ and each $c_i$ is a Coxeter element in $W_i$) and $h_i$ for the Coxeter numbers of $W_i$, then 
\begin{align*}
    |\op{Red}_{\mathcal{R}}(c_Z)|&=\binom{l_{\mathcal{R}}(c_Z)}{l_{\mathcal{R}}(c_1),\ldots,l_{\mathcal{R}}(c_r)}\cdot \prod_{i=1}^r|\op{Red}_{\mathcal{R}}(c_i)|\\
    &=\binom{k}{l_{\mathcal{R}}(c_1),\ldots,l_{\mathcal{R}}(c_r)}\cdot \prod_{i=1}^r\dfrac{h_i^{l_{\mathcal{R}}(c_i)}l_{\mathcal{R}}(c_i)!}{|W_i|}=\dfrac{k!}{|W_Z|}\prod_{i=1}^{k}h_i(W_Z),
\end{align*}
where the first equality is because any reduced reflection factorization of $c_Z$ is a shuffle of reduced reflection factorizations of each of the $c_i$ and the second line is Thm.~\ref{Thm: W-Hurwitz number} and the definition of the numbers $h_i(W_Z)$. After this, the ramification formula of $LL$ for $e$ becomes:
\begin{equation}
\dfrac{h^nn!}{|W|}=\op{deg}(LL)=\sum_{[Z]\in\mathcal{L}_W/W\atop \dim(Z)=n-k}\dfrac{h^{n-k}(n-k)!}{[N_W(Z):W_Z]}\cdot \dfrac{k!}{|W_Z|}\prod_{i=1}^{k}h_i(W_Z).\label{eq:ramif_form_parab_rec}
\end{equation}
There are $[W:N_W(Z)]$-many flats in the $W$-orbit of $Z$, so we can rewrite the above summation (after cancelling out $h^{n-k}$ and grouping together all the factorials in the left hand side) as:
\[
h^k\cdot\binom{n}{k}=\sum_{Z\in\mathcal{L}_W\atop\dim(Z)=n-k}\prod_{i=1}^kh_i(W_Z).
\]
The two sides of the equation above are exactly the coefficients of $t^{n-k}$ in the two sides of the formula of the theorem. Therefore, the proof is complete.
\end{proof}

\subsection{When \texorpdfstring{$N_W(Z)/W_Z$}{NW(Z)/WZ} acts as a reflection group}
\label{Section When N(Z)/W(Z) is a refl group}

\noindent
\setlength{\currentparindent}{\parindent}
\begin{minipage}{0.66\textwidth}
\setlength{\parindent}{\currentparindent}

In certain cases, we may derive Thm.~\ref{Thm: Primitive facto enume} directly via a degree calculation and hence avoid the overcounting argument. Indeed, if we write $C_Z:=N_W(Z)/W_Z$ for the quotient of the setwise over the pointwise stabilizer of $Z$, then by definition the $\widehat{LL}$ map has to respect $C_Z$-orbits. That is, it will factor as the composition $ LL'\circ\rho_Z$ (see Fig.~\ref{Fig: LL' map for C_Z}; the map $\nu_Z$ is the normalization of the affine variety $[Z]$), where $LL'$ may be defined analogously to Defn.~\ref{Defn: LL map}.

When $C_Z$ acts as a reflection group, by the Shephard-Todd-Chevalley theorem, the quotient $C_Z\setminus Z$ is an affine space (this is not true in general however). As in \eqref{Eq: quotient map rho}, its coordinates correspond  to the invariant polynomials $g_1({\bm z}),\cdots,g_{n-k}({\bm z})$ of the action of $C_Z$ on $Z$. Moreover, if $d_i'=\deg(g_i)$, we will have $[N(Z):W_Z]=|C_Z|=\prod_{i=1}^{n-k}d_i'$.
\end{minipage}\ \quad
\begin{minipage}{0.33\textwidth}\vspace{-1cm}
\begin{tikzpicture}[baseline=(a).base]
\node[scale=1.3] (a) at (0,0){
\begin{tikzcd}
Z \arrow[dd, "\textstyle \rho" left, ->] \arrow[rd,"\textstyle \rho_Z" above right, ->] \arrow[rr,"\textstyle \quad \widehat{LL} \quad", ->]  &[-20 pt] &[-40 pt] E_{(k,1^{n-k})}\\
& C_Z\!\setminus\! Z \arrow[ur,"\textstyle LL'" below right,->]  \\
\orb{Z}\arrow[ur,"\textstyle \nu_Z" above left, <-] &\!\!\!\!\!\!\!\! \cong W\!\setminus\! W\!\cdot\! Z \\
\end{tikzcd}
};
\end{tikzpicture}\vspace{-1.3cm}\captionof{figure}{$\widehat{LL}\!=\! LL'\circ\rho_Z$.}\label{Fig: LL' map for C_Z}

\end{minipage}

Now, by Lemma~\ref{Lemma: b_i's are homg polys in z_i's}, the $b_i({\bm z})$ are polynomials in ${\bm z}$ and since they also have to be $C_Z$-invariant (because $\widehat{LL}$ is), they will be polynomials in the $g_i({\bm z})$. That is, we can express $LL'$ as the algebraic morphism:
$$\CC^{n-k}\cong C_Z\setminus Z\ni {\bm g}:=(g_1,\cdots,g_{n-k})\xrightarrow{\displaystyle \ LL' \ } \big(b_1(\bm g),\cdots,b_{n-k}(\bm g)\big)\in E_{(k,1^{n-k})}\cong\CC^{n-k}.$$

Now, arguing exactly as in the proof of Thm.~\ref{Thm: Primitive facto enume}, but going up (see Figures~\ref{Fig: Lift of the LL map} and~\ref{Fig: LL' map for C_Z}) only until $C_Z\setminus Z$, we recover the formula via a single degree calculation:
$$\op{FACT}_W(Z)=\op{deg}(LL')=\dfrac{\prod_{i=1}^{n-k}\deg(b_i)}{\prod_{i=1}^{n-k}\deg(g_i)}=\dfrac{\prod_{i=1}^{n-k} hi}{\prod_{i=1}^{n-k} d_i'}=\dfrac{h^{\op{dim}(Z)}\cdot\big(\op{dim}(Z)\big)!}{[N_W(Z):W_Z]}.$$

One significant advantage of this approach is that it provides a natural $q$-version of the enumerative formula that can be shown to satisfy CSP's analogous to the one in \cite{theo-CSP}:
$$\op{FACT}_{W}(Z;q):=\op{Hilb}\big( (LL')^{-1}(\bm 0),q\big)=\dfrac{\prod_{i=1}^{n-k}[hi]_q}{\prod_{i=1}^{n-k}[d_i']_q}.$$

In recent work \cite[Thm.~3.5]{douglass-restricting-invariants}, Amend et al. give sufficient conditions for $C_Z$ to act as a reflection group (and a characterization for the infinite family; see their Corol.~4.8). Notice that this is not equivalent to the statement that the restricted arrangement $\mathcal{A}_W^Z$ is a reflection arrangement (see \cite[Remark~5.2]{douglass-restricting-invariants}).
\begin{remark}
If $C_Z$ does not act as a reflection group, then the morphism $LL'$ will not be as simple (i.e. it will not be a quasi-homogeneous polynomial map anymore). We can still calculate the above Hilbert series, but the answer is not as explicit; namely, it is unclear how the \emph{correct} $q$-version of the overcounting factor is related to the Hilbert series of the invariant ring $\CC[Z]^{C_Z}$.
\end{remark}

\section{A geometric interpretation of Kreweras numbers}
\label{Section: A geometric interpretation of Kreweras numbers}

The concept of type as described in Defn.~\ref{Defn: parabolics and primitives of type Z} determines a meaningful partition of the noncrossing lattice $NC(W)$. Kreweras \cite{kreweras-noncrossing-partitions} was the first to compute the block-sizes of this partition for the symmetric group. 

\begin{defn}
We define the {\it \color{blue} Kreweras numbers} for $W$, to be the numbers $$\op{Krew}_W(Z):=\#\big\{ c_i\in NC(W)\ |\ c_i\text{ is of type }\orb{Z}\big\}.$$
\end{defn}

As is the case for the total size of the noncrossing lattice, we have uniform formulas for the Kreweras numbers, but no uniform {\it proofs} of these formulas. Recall that the characteristic polynomial of a hyperplane arrangement $\mathcal{A}$ on $V$ is defined by $$\chi(\mathcal{A},t):=\sum_{Z\in\mathcal{L}_{\mathcal{A}}}\mu(V,Z)\cdot t^{\op{dim}(Z)},$$ where $\mu(V,Z)$ is the M\"obius function on the intersection lattice $\mathcal{L}_{\mathcal{A}}$, and that $\mathcal{A}^Z$ denotes the restriction of $\mathcal{A}$ on one of its flats $Z$. 

\begin{prop}\cite[essentially][Thm.~6.3]{reiner-athanasiadis-noncrossing-partitions-type-D-2004}\label{Prop: Krew formulas}
If $W$ is a Weyl group and $Z$ is one of its flats, then we have $$\op{Krew}_W(Z)=\dfrac{\chi(\mathcal{A}_W^Z,h+1)}{[N_W(Z):W_Z]}.$$
\end{prop}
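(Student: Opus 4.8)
The plan is to reduce the statement for $\op{Krew}_W(Z)$ to the enumeration of primitive factorizations (Thm.~\ref{Thm: Primitive facto enume}) by a Möbius-inversion argument on the intersection lattice $\mathcal{L}_W$. The starting observation is that $\op{Krew}_W(Z)$ counts parabolic Coxeter elements of type $\orb{Z}$, each of which (by Prop.~\ref{Prop:labels are parabolic cox elts} and Cor.~\ref{Corol: parabolic equals noncrossin}) is a Coxeter element $c_{Z'}$ of the parabolic $W_{Z'}$ for some flat $Z'$ in the orbit $\orb{Z}$; but since the $W$-orbit $\orb{Z}$ may contain several flats and the $c_{Z'}$'s for a fixed $Z'$ form a $W_{Z'}$-torsor's worth of elements (there are $|\op{Red}_{W_{Z'}}(c_{Z'})|$ reduced reflection factorizations, but that counts factorizations, not elements — the elements $c_{Z'}$ with $V^{c_{Z'}}=Z'$ number $h_{Z'}^{\op{codim}Z}(\op{codim}Z)!/|W_{Z'}|\cdot(\text{something})$, so one must be careful). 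The cleanest route is to relate $\op{Krew}_W(Z)$ to $\op{FACT}_W(Z)$ directly: a primitive factorization of type $\orb{Z}$ is a pair consisting of a parabolic Coxeter element $c_{\orb{Z}}$ together with a reduced reflection factorization of the complementary element $c_{\orb{Z}}^{-1}c$, which lives in the interval $[c_{\orb{Z}},c]\cong NC(W_{Z})$ for an appropriate parabolic attached to $Z$ — wait, rather in $NC$ of a parabolic of rank $\op{dim}(Z)$.

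Concretely, first I would fix representatives: for each $c_i$ of type $\orb{Z}$, the interval $[c_i,c]$ in $NC(W)$ is isomorphic to $NC(W')$ where $W'$ is a parabolic Coxeter system of rank $\op{dim}(Z)=n-k$, and its number of maximal chains is $h'^{\,n-k}(n-k)!/|W'|$ by Thm.~\ref{Thm: W-Hurwitz number}. Summing over all $c_i$ of type $\orb{Z}$ gives
\begin{equation*}
\op{FACT}_W(Z)=\sum_{\substack{c_i\ \text{of type}\ \orb{Z}}}|\op{Red}_{W'}(c'_i)|.
\end{equation*}
The delicate point is that the group $W'$ appearing as the ``upper interval'' $[c_i,c]$ depends on $c_i$, not merely on $\orb{Z}$; this is exactly where the factor $\chi(\mathcal{A}_W^Z,h+1)$ as opposed to a naive product of degrees enters. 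So the real work is to show
\begin{equation*}
\sum_{c_i\ \text{of type}\ \orb{Z}}\frac{h'^{\,n-k}(n-k)!}{|W_{[c_i,c]}|}=\frac{h^{n-k}(n-k)!}{[N_W(Z):W_Z]},
\end{equation*}
which after cancelling $(n-k)!$ and rearranging becomes a statement purely about the characteristic polynomial: $\op{Krew}_W(Z)\cdot(\text{average of }h'^{\,n-k}/|W'|)^{-1}$ style identity. The honest way to produce $\chi(\mathcal{A}_W^Z,h+1)$ is to invoke the known product formula $\chi(\mathcal{A}_W^Z,t)=\prod(t-m_i^Z)$ where the $m_i^Z$ are the exponents of the restricted arrangement/of $N_W(Z)/W_Z$ when that acts as a reflection group, together with the Orlik–Solomon factorization, and then recognize $\chi(\mathcal{A}_W,h+1)=\prod(h+1-m_i)=\prod d_i=|W|$ as the base case.

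The key steps, in order: (1) identify $\op{Krew}_W(Z)$ with the number of flats $Z'\subseteq Z$-type orbits weighted appropriately, i.e. set up the bijection between type-$\orb{Z}$ elements of $NC(W)$ and $W$-orbits of pairs $(Z',c_{Z'})$; (2) use the trivialization theorem (Thm.~\ref{thm: trivialization}) or directly Prop.~\ref{Prop:codim=l_R=mult=mult} to split a primitive factorization of type $\orb{Z}$ as $(c_i, \text{maximal chain in } [c_i,c])$ and hence get $\op{FACT}_W(Z)=\sum_{c_i}(\#\text{maximal chains in }[c_i,c])$; (3) apply Thm.~\ref{Thm: W-Hurwitz number} to each upper interval, which is $NC$ of a rank-$(n-k)$ reflection group whose product of degrees is $|W|/\bigl(\text{product of }h+1-\text{exponents complementary to }Z\bigr)$; (4) reorganize the sum using the Orlik–Solomon formula $\chi(\mathcal{A}_W,t)/\chi(\mathcal{A}_W^Z,t)=\prod_{\text{``lower'' exponents}}(t-m_i)$ evaluated at $t=h+1$, so that the sum over $c_i$ collapses to $\op{Krew}_W(Z)\cdot\chi(\mathcal{A}_W^Z,h+1)^{-1}\cdot|W|/[\cdots]$; (5) combine with Thm.~\ref{Thm: Primitive facto enume} to solve for $\op{Krew}_W(Z)$. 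I expect step (4) to be the main obstacle: reconciling the group-by-group data on the upper intervals $[c_i,c]$ (their Coxeter numbers $h'$ need not be uniform, and $h'$ for a parabolic need not divide $h$) with the clean evaluation $\chi(\mathcal{A}_W^Z,h+1)$ requires the non-trivial input that for the relevant parabolics the ``$h'=$ largest degree'' and the restricted-exponents identity hold — this is where the Weyl-group hypothesis and Thm.~\ref{Thm: W-Hurwitz number} are genuinely used, and where the argument ceases to be uniform.
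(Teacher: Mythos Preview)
Your approach is entirely different from the paper's, and it contains a genuine gap.

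The paper's proof is not self-contained: it simply cites Athanasiadis--Reiner's theorem that the Kreweras numbers for $NC(W)$ coincide with the corresponding type-refined counts of \emph{nonnesting} partitions (antichains in the root poset), and then invokes Sommers' uniform formula for the latter. No attempt is made to derive the formula from the $LL$-map machinery of this paper; indeed, the paragraph immediately preceding Prop.~\ref{Prop: Krew formulas} states explicitly that ``we have uniform formulas for the Kreweras numbers, but no uniform \emph{proofs},'' and the remainder of \S\ref{Section: A geometric interpretation of Kreweras numbers} is devoted to speculation about how such a proof might eventually be obtained via the lifted $\widehat{LL}$ map.

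Your proposed route through Thm.~\ref{Thm: Primitive facto enume} cannot close without input essentially equivalent to the statement you are proving. The decomposition
\[
\op{FACT}_W(Z)=\sum_{c_i\ \text{of type}\ \orb{Z}}\big|\op{Red}_W(c_i^{-1}c)\big|
\]
is correct, but the summands genuinely vary with $c_i$: two noncrossing elements of the same type $\orb{Z}$ can have Kreweras complements $c_i^{-1}c$ of \emph{different} parabolic types. Already in $\sym_4$ with $c=(1234)$, the complements of $(12)$ and $(13)$ are a $3$-cycle and a product of two disjoint transpositions, contributing $3$ and $2$ reduced factorizations respectively. Hence knowing $\op{FACT}_W(Z)$ and the number of terms $\op{Krew}_W(Z)$ is not enough to solve for either; one would need the full matrix recording how many type-$\orb{Z}$ elements have complement of each type $\orb{Z'}$, which is precisely the Krattenthaler--M\"uller decomposition-number data the paper cites as case-by-case. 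Your step~(4), where the sum is meant to collapse via an Orlik--Solomon identity at $t=h+1$, has no mechanism for this: neither $h'$ nor $|W'|$ is a function of $\orb{Z}$ alone, so there is nothing to factor out of the sum. You correctly flag step~(4) as ``the main obstacle,'' but it is in fact a dead end rather than a technical difficulty---if it worked, it would give the uniform proof the paper explicitly says is missing.
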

\begin{proof}[Sketch of the proof]
In \cite[Thm.~6.3]{reiner-athanasiadis-noncrossing-partitions-type-D-2004}, Athanasiadis and Reiner prove that the Kreweras numbers are equal to the corresponding statistics for nonnesting partitions. Those had already been uniformly shown to adhere to the above formula; see for instance \cite[Prop.~6.6:(1)]{sommers-b-stable-ideals}.
\end{proof}

\begin{remark}
As it happens, the characteristic polynomials $\chi(\mathcal{A}_W^Z,t)$ have integer roots: $$\chi(\mathcal{A}_W^Z,t)=\prod_{i=1}^{\op{dim}(Z)}(t-b_i^Z),$$ where the $b_i^Z$ are the so called Orlik-Solomon exponents for $Z$ (first computed in \cite{orlik-solomon-coxeter-arrangements-symposia} and \cite{orlik-solomon-unitary-reflection-groups-and-cohomology}). These nice product formulas are a consequence of the fact that restricted arrangements of reflection arrangements are free (see \cite[Thm.~4.137]{orlik-terao-arrangements-of-hyperplanes} for the implication and \cite{restriction-arrangements-are-free-hoge-gerhard} for the completed case-by-case proof).
\end{remark}

In a similar fashion to our Lemma~\ref{Lemma: FACT_Z and size of fiber}, we may relate the number of noncrossing elements of type $\orb{Z}$ (i.e. the Kreweras numbers) with the size of a particular fiber of the lifted $\widehat{LL}$ map. Let $\widehat{e}_{(k,n-k)}$ be the decorated (centered) multiset $\{\widehat{-(n-k)},\underbrace{k,\cdots,k}_{(n-k)\text{-times}}\}$ (see Defn.~\ref{Defn: decorated config space}).

\begin{lem}\label{Lem: Krew via size of fiber}
The number $\op{Krew}_W(Z)$ of noncrossing elements of type $\orb{Z}$ is equal to:
$$\frac{\#\big\{\big(\widehat{LL}\big)^{-1}(\widehat{e}_{(k,n-k)})\big\}}{[N_W(Z):W_Z]}.$$
\end{lem}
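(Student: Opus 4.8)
The plan is to mimic the proof of Lemma~\ref{Lemma: FACT_Z and size of fiber}, but now tracking the full block-factorization datum instead of just a primitive factorization, and then to pull the relevant fiber up through the lift $\widehat{LL}$ as in Figure~\ref{Fig: Lift of the LL map}. First I would observe that the special decorated multiset $\widehat{e}_{(k,n-k)}$ has exactly $n-k$ distinct non-decorated points, all of multiplicity $1$, and a decorated point $\widehat{-(n-k)}$ of multiplicity exactly $k$ which is \emph{distinct} from all the others (so $\widehat{e}_{(k,n-k)}\in\reg{E_{(k,1^{n-k})}}$). Its image $e':=F(\widehat{e}_{(k,n-k)})\in E_n$ is then a configuration with a unique point of multiplicity $k$ and all other points simple — structurally the same shape as the multiset $e$ in \eqref{EQ: choice of e in E_n to be lifted}. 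So by the trivialization theorem (Thm.~\ref{thm: trivialization}), a point $y\in LL^{-1}(e')$ together with its label $\op{rlbl}(y)$ is exactly a compatible pair $(e',\sigma)$ where $\sigma=(c_1,t_1,\dots,t_{n-k})$ is a block factorization of $c$ with $l_R(c_1)=k$ and the $t_i$ reflections — i.e. a primitive factorization in the weak sense.

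The key new input is to impose that $c_1$ itself be a \emph{parabolic Coxeter element}, i.e. that the factor of length $k$ is noncrossing-minimal in the sense measured by $NC(W)$. The point is that a length-$k$ element $c_1\leq_R c$ of a fixed conjugacy class of type $\orb{Z}$ is a Coxeter element of $W_Z$ (Cor.~\ref{Corol: parabolic equals noncrossin}), and — by the subtle fact recalled before Thm.~\ref{thm: trivialization}, namely $\op{Red}_{W_Z}(c_Z)=\op{Red}_W(c_Z)$ — the \emph{number} of ways to complete such a $c_1$ to a primitive factorization $c=c_1\cdot t_1\cdots t_{n-k}$ equals $|\op{Red}_{W/ }|$-type data that is independent of which parabolic Coxeter element of type $\orb{Z}$ one picks. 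Hence among all primitive factorizations counted by $LL^{-1}(e')$, the proportion whose first factor is noncrossing of type $\orb{Z}$ is governed by $\op{Krew}_W(Z)$; more precisely I would argue that restricting to $LL^{-1}(e')\cap\orb{Z}_Y$ selects exactly those $y$ whose label has first factor of type $\orb{Z}$ — this is again a consequence of Prop.~\ref{Prop:labels are parabolic cox elts} and Prop.~\ref{Prop:codim=l_R=mult=mult}, exactly as in Lemma~\ref{Lemma: FACT_Z and size of fiber}, since the decorated point has multiplicity $k=\op{codim}(Z)$. Then the same commuting-square argument of Figure~\ref{Fig: Lift of the LL map} as in the proof of Thm.~\ref{Thm: Primitive facto enume} gives
$$\#\big\{(\widehat{LL})^{-1}(\widehat{e}_{(k,n-k)})\big\}=[N_W(Z):W_Z]\cdot \#\big\{LL^{-1}(e')\cap\orb{Z}_Y\big\},$$
with the index $[N_W(Z):W_Z]$ appearing as the size of a generic $\rho$-fiber over $\orb{\reg Z}$ and $F^{-1}(e')=\{\widehat{e}_{(k,n-k)}\}$ being a single point (unique decoration of the unique multiplicity-$k$ point).

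The main obstacle I expect is the identification $\#\{LL^{-1}(e')\cap\orb{Z}_Y\}=\op{Krew}_W(Z)$ itself — that is, making precise that \emph{each} noncrossing element of type $\orb{Z}$ contributes equally, so that counting the $y$'s with first factor of type $\orb{Z}$ literally reproduces the Kreweras number rather than some weighted sum. Unlike in Lemma~\ref{Lemma: FACT_Z and size of fiber}, where the configuration $e$ had its $n-k$ simple points at \emph{distinct generic} values $1,\dots,n-k$ and the factorization was recovered by blowing up, here the non-decorated points of $\widehat{e}_{(k,n-k)}$ all sit at the \emph{same} value $k$, so the $n-k$ reflections $t_1,\dots,t_{n-k}$ merge into a single length-$(n-k)$ block factor $c_2$ with $c_1c_2=c$, $l_R(c_1)+l_R(c_2)=n$; thus a point of $LL^{-1}(\widehat e_{(k,n-k)})$ records precisely an ordered pair $(c_1,c_2)$ of a parabolic Coxeter element of type $\orb Z$ and its complement in $NC(W)$. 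Since for fixed $c$ each noncrossing $c_1$ has a \emph{unique} complement $c_2=c_1^{-1}c$ in the interval $[1,c]$, such pairs are in bijection with noncrossing elements $c_1$ of type $\orb Z$, giving exactly $\op{Krew}_W(Z)$ of them — but one must be slightly careful that $\widehat e_{(k,n-k)}$ lies in the locus where $\widehat{LL}$ (equivalently $F\circ\widehat{LL}=LL\circ\op{pr}_Y\circ\rho$) has the "expected" fibre structure, which I would justify as in the proof of Thm.~\ref{Thm: Primitive facto enume} by noting that only the multiplicity pattern of $\widehat e_{(k,n-k)}$ was used and perturbing within that pattern if necessary.
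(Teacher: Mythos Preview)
Your proposal contains a misreading at the outset that you only partially correct later. The decorated multiset $\widehat{e}_{(k,n-k)}=\{\widehat{-(n-k)},k,\dots,k\}$ does \emph{not} have $n-k$ distinct simple non-decorated points: all of them coincide at the single value $k$. Hence $\widehat{e}_{(k,n-k)}\notin\reg{E_{(k,1^{n-k})}}$ (unless $n-k=1$), its image $e':=F(\widehat{e}_{(k,n-k)})\in E_n$ has exactly \emph{two} distinct points (multiplicities $k$ and $n-k$), and the compatible block factorizations are pairs $(c_1,c_2)$ with $l_R(c_1)=k$ and $l_R(c_2)=n-k$, not primitive factorizations $(c_1,t_1,\dots,t_{n-k})$. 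Your first two paragraphs are built on the wrong picture; the detour through $\op{Red}_{W_Z}(c_Z)=\op{Red}_W(c_Z)$ and ``proportions'' is irrelevant here.

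You do recover the correct picture in your last paragraph, and the argument you sketch there --- each point in the relevant fiber carries a label $(c_1,c_2)$ with $c_1$ of type $\orb{Z}$, and such pairs biject with noncrossing elements of type $\orb{Z}$ via the unique Kreweras complement $c_2=c_1^{-1}c$ --- is exactly the paper's proof. However, your attempt to route the count through the commuting square of Fig.~\ref{Fig: Lift of the LL map} as in Thm.~\ref{Thm: Primitive facto enume} introduces a genuine gap: the claim that $F^{-1}(e')=\{\widehat{e}_{(k,n-k)}\}$ is a singleton is \emph{false} whenever $k\le n-k$, since then the point $k$ in $e'$ also has multiplicity $\ge k$ and can serve as the decorated point. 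For the same reason $(\op{pr}_Y|_{[Z]})^{-1}$ need not be single-valued over points of $LL^{-1}(e')\cap\orb{Z}_Y$, and that intersection need not coincide with the set of $y$ whose \emph{first} label is of type $\orb{Z}$. The paper sidesteps all of this by analysing the $\widehat{LL}$-fiber directly: the decoration records $f_n(\bm z)$, so any $\bm z\in(\widehat{LL})^{-1}(\widehat{e}_{(k,n-k)})$ automatically has $\rho(\bm z)$ sitting at the multiplicity-$k$ point of $e'$, forcing $\bm z\in\reg{Z}$ and the first label to be of type $\orb{Z}$; the factor $[N_W(Z):W_Z]$ then arises as the size of the $\rho|_Z$-fiber over $\orb{\reg{Z}}$, exactly as in Thm.~\ref{Thm: Primitive facto enume}. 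Your final paragraph already contains this argument --- just drop the commuting-square scaffolding and the first two paragraphs.
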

\begin{proof}
The proof is analogous to the ones in Lemma~\ref{Lemma: FACT_Z and size of fiber} and Thm.\ref{Thm: Primitive facto enume}, so we present it more compactly.

By the Trivialization Theorem and the definition of $\widehat{LL}$, the points ${\bm z}$ that lie in the fiber $\big(\widehat{LL}\big)^{-1}(\widehat{e}_{(k,n-k)})$ are such that $\op{rlbl}\big(y({\bm z})\big)=(c_1,c_2)$ where $c_1$ is of type $\orb{Z}$. Since the multiplicity of the decorated point in $\widehat{e}_{(k,n-k)}$ is equal to $k=\op{codim}(Z)$, the points ${\bm z}$ will further belong to the regular part $\reg{Z}$. 

That is, for each block factorization $c_1\cdot c_2=c$, with $c_1$ of type $\orb{Z}$, we have $[N_W(Z):W_Z]$-many preimages in the fiber. On the other hand, the number of such block factorizations is precisely the number of noncrossing elements of type $\orb{Z}$ (since each $c_1$ has a unique {\it\color{blue} Kreweras complement} $c_2=c_1^{-1}c$). This completes the argument.
\end{proof}

\subsubsection*{Speculation towards a uniform enumeration of $NC(W)$}

The previous Lemma~\ref{Lem: Krew via size of fiber} suggests that we would get a uniform proof of the Kreweras formulas (Prop.~\ref{Prop: Krew formulas}) if we could show in a {\it geometric} way that $$\#\big\{\big(\widehat{LL}\big)^{-1}(\widehat{e}_{(k,n-k)})\big\}=\chi(\mathcal{A}_W^Z,h+1)=\prod_{i=1}^{\op{dim}(Z)}(h+1-b_i^Z).$$

The difficulty here is in the fact that the fiber is not reduced. Our combinatorial description of the local multiplicities of the $LL$ map can be translated of course to the lifted case, but apparently it is not sufficient. In \cite[\S~8.1]{theo-thesis}, we give a geometric reason for this local behavior of $\widehat{LL}$ when the flat $Z$ is a line; it is not clear how to extend this to the general case.

We would like to note however that, if this is successful, the formulas for the Kreweras numbers imply (in a uniform way) the formula for the size of the noncrossing lattice: $$|NC(W)|=\frac{1}{|W|}\prod_{i=1}^n(h+d_i).$$ 

Indeed, the following calculation (see \cite[Prop.~105]{theo-thesis}), which is true for all complex reflection groups $W$, implies the previous statement by setting $t=h+1$. 
\begin{prop}
Consider the {\bf Kreweras polynomials} $\op{Krew}_W^Z(t):=\dfrac{\chi(\mathcal{A}_W^Z,t)}{[N_W(Z):W_Z]}$. Then, we have $$\sum_{\orb{Z}\in W\backslash \mathcal{L}_W}\op{Krew}_W^Z(t)=\frac{1}{|W|}\prod_{i=1}^n(t+d_i-1).$$
\end{prop}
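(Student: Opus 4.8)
The plan is to prove the identity $\sum_{\orb{Z}} \op{Krew}_W^Z(t) = \frac{1}{|W|}\prod_{i=1}^n(t+d_i-1)$ by recognizing both sides as specializations of a finite-group Molien-type generating function, or more directly, by summing over the full intersection lattice $\mathcal{L}_W$ (not just the orbits) and then exploiting the orbit-counting built into the definition of $\op{Krew}_W^Z$. Concretely, I would first rewrite the left-hand side. By definition $\op{Krew}_W^Z(t) = \chi(\mathcal{A}_W^Z,t)/[N_W(Z):W_Z]$, and since $[W:W_Z] = [W:N_W(Z)]\cdot[N_W(Z):W_Z]$, multiplying and dividing by $|W|$ converts the sum over orbits $\orb{Z}\in W\backslash\mathcal{L}_W$ into a sum over \emph{all} flats $Z\in\mathcal{L}_W$:
\begin{equation*}
\sum_{\orb{Z}\in W\backslash\mathcal{L}_W}\frac{\chi(\mathcal{A}_W^Z,t)}{[N_W(Z):W_Z]} = \frac{1}{|W|}\sum_{Z\in\mathcal{L}_W}\frac{|W|}{[N_W(Z):W_Z]}\cdot\frac{\chi(\mathcal{A}_W^Z,t)}{[W:N_W(Z)]} = \frac{1}{|W|}\sum_{Z\in\mathcal{L}_W}|W_Z|\cdot\chi(\mathcal{A}_W^Z,t),
\end{equation*}
using $|W|/[N_W(Z):W_Z] = [W:N_W(Z)]\cdot|W_Z|$ and noting each orbit contributes $[W:N_W(Z)]$ flats. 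So it suffices to prove $\sum_{Z\in\mathcal{L}_W}|W_Z|\cdot\chi(\mathcal{A}_W^Z,t) = \prod_{i=1}^n(t+d_i-1)$.

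Next I would identify each term with a sum over group elements. By Steinberg's theorem, $|W_Z| = \#\{w\in W : V^w\supseteq Z\}$, or more usefully, $|W_Z|$ equals the number of $w\in W$ with $V^w = Z$ summed appropriately — actually the cleanest route is: for each flat $Z$, $\sum_{w:\,V^w=Z}1 = $ (number of elements with fixed space exactly $Z$), and $|W_Z| = \sum_{Z'\supseteq Z}(\text{number of }w\text{ with }V^w=Z')$... this suggests instead working with the well-known identity expressing the characteristic polynomial of a reflection arrangement via Shephard–Todd–Solomon: $\chi(\mathcal{A}_W,t) = \prod_{i=1}^n(t-(d_i-1))$, wait — we want $(t+d_i-1)$, so the relevant object is the \emph{coexponents} / the arrangement's Poincaré polynomial evaluated suitably, or the identity $\sum_{w\in W}t^{\dim V^w} = \prod_{i=1}^n(t+d_i^*-1)$ where $d_i^*$ are the codegrees. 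For a well-generated group acting irreducibly, Orlik–Solomon's formula gives $\sum_{w\in W} t^{\dim(V^w)} = \prod_{i=1}^{n}(t + m_i^*)$ with $m_i^*$ the coexponents; I would need to reconcile this with the degrees $d_i$ appearing in the claimed formula, presumably via the duality $d_i + d_{n+1-i}^* $-type relations or the fact that for the relevant groups coexponents and exponents coincide in the needed combination. The key bridge is Orlik–Solomon's theorem that $\sum_{Z\in\mathcal{L}_W}|W_Z|\,\chi(\mathcal{A}_W^Z,t)$ counts, via a standard Möbius/double-counting argument, the quantity $\sum_{w\in W}t^{\dim V^w}$ — this follows because $\chi(\mathcal{A}_W^Z,t) = \sum_{Z'\subseteq Z}\mu(Z,Z')t^{\dim Z'}$ and interchanging the order of summation collapses the Möbius function against the $|W_Z|$ weights (using $|W_Z| = \sum_{w:V^w\supseteq Z}1$ and Steinberg).

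So the structure of the proof is: (1) the orbit-to-flat rewriting above; (2) the double-counting step turning $\sum_Z|W_Z|\chi(\mathcal{A}_W^Z,t)$ into $\sum_{w\in W}t^{\dim V^w}$; (3) invoking the Shephard–Todd–Orlik–Solomon product formula $\sum_{w\in W}t^{\dim V^w} = \prod_{i=1}^n(t+d_i-1)$, valid for well-generated irreducible $W$ because for such groups the codegrees satisfy $d_i^* = d_{n+1-i}-2$, equivalently $\{m_i^*+1\} = \{d_i-1\}$ as multisets (a consequence of the duality for well-generated groups, e.g.\ Bessis' $d_i + d_{n+1-i}^* = d_n = h$ together with $\sum$ over $i$). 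The main obstacle I anticipate is step (3): verifying that the product formula genuinely reads $\prod(t+d_i-1)$ rather than involving codegrees, which requires the Orlik–Solomon computation of $\sum_w t^{\dim V^w}$ together with the specific numerology of well-generated groups ($d_i + d_{n+1-i}^* = h$). If that numerological input is not available uniformly one falls back to the classification, but since the paper cites \cite[Prop.~105]{theo-thesis} as the source and claims it holds for \emph{all} complex reflection groups $W$ (not merely well-generated ones), I would expect the intended argument to be the purely combinatorial double-counting of steps (1)–(2) combined with the general Orlik–Solomon formula $\sum_{w\in W}t^{\dim V^w} = \prod_{i=1}^n(t+m_i^*)$ and the observation that for the groups in question $\{m_i^*\} = \{d_i-1\}$; the routine verification of this last set-equality is what I would relegate to a citation rather than grind through here.
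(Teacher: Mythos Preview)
Your plan is correct and is the standard derivation; the paper itself does not prove this proposition in the text (it only cites \cite[Prop.~105]{theo-thesis}), so there is no in-paper argument to compare against. Steps (1) and (2) go through exactly as you describe: the orbit-to-flat rewriting turns the sum into $\frac{1}{|W|}\sum_{Z\in\mathcal{L}_W}|W_Z|\,\chi(\mathcal{A}_W^Z,t)$, and then M\"obius inversion on $\mathcal{L}_W$ (using Steinberg's theorem in the form $|W_Z|=\#\{w:V^w\supseteq Z\}$) collapses this to $\frac{1}{|W|}\sum_{w\in W}t^{\dim V^w}$.

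Your hesitation in step (3) is unnecessary, though. The identity $\sum_{w\in W}t^{\dim V^w}=\prod_{i=1}^n(t+d_i-1)$ genuinely involves the \emph{degrees} (equivalently the exponents $m_i=d_i-1$) and holds for \emph{all} finite complex reflection groups --- it is the classical Shephard--Todd count of elements by fixed-space dimension, and uniform proofs exist via a specialization of the bigraded Molien identity for $\CC[V]\otimes\Lambda V^*$ (Solomon's theorem). You are conflating it with the Orlik--Solomon factorization $\chi(\mathcal{A}_W,t)=\prod_i(t-m_i^*)$, which is a different statement (about M\"obius numbers of the arrangement, not fixed spaces of group elements) and is where the coexponents $m_i^*$ actually live. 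So the detour through the well-generated duality $d_i+d_{n+1-i}^*=h$ is not needed, and the argument already works in the full generality the paper asserts.
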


\section{The shadow stratification}
\label{Section: The shadow stratification}

The trivialization theorem suggests an obvious stratification of the base space $Y$, with respect to the labels $\op{rlbl}(y)$. In this last section we are going to define these strata and build a framework that relates the local geometry of the $LL$ map on them, with finer enumerative and structural properties of block factorizations.

Let $\sigma=(w_1,\cdots,w_k)\in D_{\bullet}$ be a block factorization \eqref{EQ: block factorizations} of the Coxeter element $c$. As we have seen in Corol.~\ref{Corol: parabolic equals noncrossin}, the $w_i$'s are parabolic Coxeter elements; their conjugacy classes are therefore associated to $W$-orbits of flats $[Z]$ (via $w_i\rightarrow V^{w_i}$). Following \cite[Defn.~1.1.7]{LZ-graphs-on-surfaces}, we introduce: 

\begin{defn}[Passport]
For a block factorization $\sigma$, the tuple $\big(\bm Z\big):=\big([Z_1],\cdots,[Z_k]\big)$ of the {\it parabolic classes} of its factors will be called the {\color{blue} passport} of $\sigma$. If we are only interested in the set of classes that appear, we write $\big\{\bm Z\big\}:=\big\{[Z_1],\cdots,[Z_k]\big\}$ for the {\color{blue} unordered passport} of $\sigma$.

For each such $\big\{\bm Z\big\}$, we now define the {\color{blue} shadow stratum} $Y_{\{\bm Z\}}\subset Y$ as the set of points $y\in Y$ whose labels $\op{rlbl}(y)$ have (unordered) passport $\big\{\bm Z\big\}$. That is,
$$Y_{\{\bm Z\}}:=\big\{ y\in Y: \ \text{if }\op{rlbl}(y)=(w_1,\cdots,w_k),\text{ then }\exists\ \pi\in S_k:\ \big[V^{w_{\pi(i)}}\big]=\big[Z_i\big]\big\}.$$
Notice that the strata $Y_{\{\bm Z\}}$ are by definition {\it disjoint}.
\end{defn}

This {\it shadow stratification} of the base space $Y$ is a refinement of its namesake, considered by Bessis \cite[below Corol.~5.9]{bes-kpi1} and Ripoll \cite[above Thm.~5.2]{ripoll-french-paper}. In the latter, the strata were indexed by partitions $\lambda$ which recorded the ranks (codimensions) of the $Z_i$. One might justify our use of the term by picturing the points $y$ living in the ``shadow" of the intersection $L_y\bigcap \mathcal{H}$ as in Fig.~\ref{Fig:label map}.

As opposed to the strata $Y_{\lambda}$ of Bessis and Ripoll, the shadow strata $Y_{\{\bm Z\}}$ are not necessarily varieties. We can however describe them as images of (quasi-affine) varieties; this means they are {\color{blue} constructible sets} \cite[Corol.~14.7]{eisenbud-commutative-algebra}. Indeed, pick representatives $Z_i$ of the classes $[Z_i]$ (one can use multiple copies of the same flat if needed), and consider the set of points 
\begin{equation}\label{Eq: Shadow strata}
(\bm z_1,\bm z_2,\cdots, \bm z_k)\in (Z_1\times Z_2\times\cdots\times Z_k)\text{ for which } \begin{Bmatrix}
f_1(\bm z_1)=\cdots =f_1(\bm z_k)\\
f_2(\bm z_1)=\cdots =f_2(\bm z_k)\\
\cdots\\
f_{n-1}(\bm z_1)=\cdots =f_{n-1}(\bm z_k)\\
f_n(\bm z_i)\neq f_n(\bm z_j)\\
\end{Bmatrix}.
\end{equation}
This defines a quasi-affine variety $V\big(\{\bm Z\}\big)\subset Z_1\times\cdots\times Z_k$ (since the last conditions define Zariski-open sets). Here the polynomials $f_i({\bm z_j})$ are given by restriction of coordinates, and of course depend on the inclusion $Z_j\subset V$.

\begin{prop}
The image of the quasi-affine variety $V\big(\{\bm Z\}\big)$ defined by \eqref{Eq: Shadow strata} under the map
$$V\big(\{\bm Z\}\big)\ni(\bm z_1,\cdots,\bm z_k)\rightarrow (f_1(\bm z_1),f_2(\bm z_1),\cdots,f_{n-1}(\bm z_1))\in Y,$$ where it doesn't matter if we use some other $\bm z_i$ instead of $\bm z_1$, is the stratum $Y_{\{\bm Z\}}$.
\end{prop}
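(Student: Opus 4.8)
The plan is to show set-theoretic equality of two subsets of $Y$ by mutual inclusion, using the compatibility dictionary between $LL$, $\op{rlbl}$, and the parabolic stratification established in Section~\ref{Section: The LL map and the trivialization theorem}. Denote by $\Phi$ the map on the quasi-affine variety $V(\{\bm Z\})$ given in the statement, sending $(\bm z_1,\dots,\bm z_k)$ to $(f_1(\bm z_1),\dots,f_{n-1}(\bm z_1))\in Y$. First I would record the well-definedness of $\Phi$: the defining equations of $V(\{\bm Z\})$ force $f_j(\bm z_1)=f_j(\bm z_i)$ for all $i$ and all $j\le n-1$, so the output does not depend on which $\bm z_i$ we evaluate on. Thus $\Phi$ really is a morphism to $Y$ and $\op{Im}(\Phi)$ is a well-defined subset.

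For the inclusion $\op{Im}(\Phi)\subseteq Y_{\{\bm Z\}}$: take a point $(\bm z_1,\dots,\bm z_k)\in V(\{\bm Z\})$ and set $y=\Phi(\bm z_1,\dots,\bm z_k)$. Each $\bm z_i\in Z_i$ determines a point of $V$, hence a point $[p_i]=[\rho(\bm z_i)]\in W\backslash V$ whose $Y$-coordinate is $y$ (by the defining equations) and whose $f_n$-coordinate is $f_n(\bm z_i)$; since the $f_n(\bm z_i)$ are pairwise distinct (the open conditions), these give $k$ distinct points in the slice $L_y$. I would then argue that these $k$ points are exactly the points of $L_y\cap\mathcal{H}$ lying in the relevant orbit-strata. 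The key input is Prop.~\ref{Prop:codim=l_R=mult=mult} together with Prop.~\ref{Prop:labels are parabolic cox elts}: one needs that each $\bm z_i$, chosen generically in $Z_i$, lies in $\reg{Z_i}$ so that the corresponding point $(y,f_n(\bm z_i))$ sits in $\orb{\reg{Z_i}}$, and hence its label is a parabolic Coxeter element of type $\orb{Z_i}$. Summing multiplicities, $\sum_i\op{codim}(Z_i)=\sum_i l_R(w_i)=l_R(c)=n$, which matches $\deg$ of the discriminant polynomial, so these $k$ points account for the \emph{entire} intersection $L_y\cap\mathcal{H}$; therefore $\op{rlbl}(y)=(w_1,\dots,w_k)$ with $[V^{w_i}]=[Z_i]$ up to the complex-lexicographic reordering, i.e.\ $y\in Y_{\{\bm Z\}}$.

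For the reverse inclusion $Y_{\{\bm Z\}}\subseteq\op{Im}(\Phi)$: take $y\in Y_{\{\bm Z\}}$, so $\op{rlbl}(y)=(w_1,\dots,w_k)$ with unordered passport $\{\bm Z\}$, say $[V^{w_i}]=[Z_{\pi(i)}]$ for a permutation $\pi$. Write $LL(y)=\{x_1,\dots,x_k\}$ (distinct, since the passport has $k$ entries, one per label). By Prop.~\ref{Prop:labels are parabolic cox elts} and Prop.~\ref{Prop:codim=l_R=mult=mult} the point $(y,x_i)\in\mathcal{H}$ lies in $\orb{\reg{Z'_i}}$ for some flat $Z'_i$ with $\orb{Z'_i}=[V^{w_i}]=[Z_{\pi(i)}]$. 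Pulling $(y,x_i)$ back under $\rho$ and conjugating by an element of $W$ carrying $Z'_i$ to the chosen representative $Z_{\pi(i)}$, we obtain a point $\bm z_{\pi(i)}\in\reg{Z_{\pi(i)}}$ with $f_j(\bm z_{\pi(i)})=f_j(y\text{'s coordinates})$ for $j\le n-1$ and $f_n(\bm z_{\pi(i)})=x_i$. Reindexing so the slot matches, the tuple $(\bm z_1,\dots,\bm z_k)$ satisfies all equations of \eqref{Eq: Shadow strata}: the $f_{\le n-1}$-values agree (they all equal the coordinates of $y$), and the $f_n$-values are the distinct $x_i$'s. Hence $(\bm z_1,\dots,\bm z_k)\in V(\{\bm Z\})$ and $\Phi(\bm z_1,\dots,\bm z_k)=y$, so $y\in\op{Im}(\Phi)$.

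The main obstacle I anticipate is the bookkeeping around the use of \emph{representatives}: the classes $[Z_i]$ may coincide (the statement explicitly allows multiple copies of the same flat), so in the reverse inclusion one must be careful that the chosen $\bm z_i$'s land in the fixed representative flats rather than in arbitrary $W$-translates, and that distinct labels with the \emph{same} parabolic class are handled without collision — this is where one invokes that $\orb{\reg{Z}}$-points of $\mathcal{H}$ pull back to a full $N_W(Z)/W_Z$-orbit in $\reg{Z}$, so a representative in the fixed copy of $Z$ always exists. A secondary subtlety is justifying that \emph{every} point of $L_y\cap\mathcal{H}$ over $y\in Y_{\{\bm Z\}}$ already appears among the $k$ labeled points (no extra points hiding with multiplicity), which is exactly the degree count $\sum\op{codim}(Z_i)=n$ above; this relies on Lemma~\ref{Lemma: mult=codim} and the monicity \eqref{Eq:Discriminant monic}. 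Once these points are pinned down, the argument is a direct unwinding of definitions.
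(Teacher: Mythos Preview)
Your approach matches the paper's: both directions are handled by the same ingredients (Prop.~\ref{Prop:labels are parabolic cox elts}, Prop.~\ref{Prop:codim=l_R=mult=mult}, and the degree count $\sum_i\op{codim}(Z_i)=n$), and your reverse inclusion $Y_{\{\bm Z\}}\subseteq\op{Im}(\Phi)$ is more detailed than the paper's, which dismisses it in one line.

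There is, however, a circularity in your forward inclusion $\op{Im}(\Phi)\subseteq Y_{\{\bm Z\}}$. You write that ``one needs that each $\bm z_i$, chosen generically in $Z_i$, lies in $\reg{Z_i}$'' and then sum $\sum_i\op{codim}(Z_i)=\sum_i l_R(w_i)$. But an arbitrary point of $V(\{\bm Z\})$ need not have $\bm z_i\in\reg{Z_i}$, and you cannot speak of the labels $w_i$ (let alone $l_R(w_i)=\op{codim}(Z_i)$) until you know this. The paper reverses the order: since $\bm z_i\in Z_i$, the point $(y,f_n(\bm z_i))$ lies in $\orb{Z_i}$, hence in $\orb{\reg{Z'}}$ for some $Z'\subseteq Z_i$, so by Lemma~\ref{Lemma: mult=codim} and Prop.~\ref{Prop:codim=l_R=mult=mult} one gets only the \emph{inequality} $\op{mult}_{f_n(\bm z_i)}\big(LL(y)\big)\geq\op{codim}(Z_i)$. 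Summing these inequalities against the total $n$ forces equality everywhere, which in turn forces $\bm z_i\in\reg{Z_i}$; only then does Prop.~\ref{Prop:labels are parabolic cox elts} give labels of type $\orb{Z_i}$. Rewriting your multiplicity count as an inequality first, and deriving regularity as a consequence rather than an assumption, closes the gap.
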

\begin{proof}
It is an easy application of Prop.~\ref{Prop:labels are parabolic cox elts} that indeed all the points of the stratum $Y_{\{\bm Z\}}$ belong to the image of $V\big(\{\bm Z\}\big)$. That there is nothing else is only slightly trickier:

Write $y(\bm z_1)$ for the image and notice that, by definition, the points $\big(y(\bm z_1),f_n(\bm z_i)\big)$ must all belong to the discriminant hypersurface $\mathcal{H}$ (since $y(\bm z_1)=y(\bm z_i)$ for all $i$). Now, by Prop.~\ref{Prop:codim=l_R=mult=mult}, we must also have that $$\op{mult}_{f_n(\bm z_i)}\big(LL(y(\bm z_1))\big)\geq \op{codim}(Z_i),$$ for all $i$ (since $\bm z_i$ could belong to a smaller flat $Z'\subset Z_i$ of higher codimension). But on the other hand, the intersection $L_{y(\bm z_1)}\bigcap \mathcal{H}$ contains exactly $n=\sum_{i=1}^k\op{codim}(Z_i)$ points counted with multiplicity. That is, the set $\{f_n(\bm z_1),\cdots,f_n(\bm z_k)\}$ is precisely the image $LL\big(y(\bm z_1)\big)$ and the points $\bm z_i$ are all in the regular part $Z_i^{\op{reg}}$. In other words, and by Prop.~\ref{Prop:labels are parabolic cox elts}, the labels $\op{rlbl}\big(y(\bm z_1)\big)$ have (unordered) passport $\{\bm Z\}$.
\end{proof}

The exact same argument as in the previous proof serves as a characterization of passports:

\begin{cor}
A set of $W$-orbits of flats $\{\bm Z\}=\big\{ [Z_1],\cdots,[Z_k]\big\}$ is a passport if and only if they satisfy $\sum_{i=1}^k\op{codim}(Z_i)=n$ and the equations \eqref{Eq: Shadow strata} have a solution (for any choice of representatives $Z_i$). 
\end{cor}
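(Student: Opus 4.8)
The plan is to show that the conditions stated are exactly the two constraints extracted in the proof of the preceding proposition, run in both directions. First I would prove the "only if" direction. Suppose $\{\bm Z\}=\big\{[Z_1],\cdots,[Z_k]\big\}$ is a passport, so there is a point $y\in Y$ with $\op{rlbl}(y)=(w_1,\cdots,w_k)$ whose factors have parabolic classes $[V^{w_i}]=[Z_i]$ after reordering. By Prop.~\ref{Prop:codim=l_R=mult=mult}, writing $LL(y)=\{x_1,\cdots,x_k\}$ with $(y,x_i)\in[Z_i^{\op{reg}}]$ (up to reindexing), we have $\op{mult}_{x_i}(LL(y))=\op{codim}(Z_i)$; summing over $i$ gives $\sum_i\op{codim}(Z_i)=\sum_i\op{mult}_{x_i}(LL(y))=n$, since $LL(y)$ is a configuration of $n$ points counted with multiplicity. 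For the second condition, choose $\bm z_i\in Z_i^{\op{reg}}$ in the $W$-orbit of the point of $V$ lying over $(y,x_i)$; concretely, $(y,x_i)\in[Z_i^{\op{reg}}]$ means there is a preimage in $Z_i^{\op{reg}}$ whose image under $\rho$ has $Y$-coordinate $y$ and $f_n$-coordinate $x_i$. Then $f_j(\bm z_i)=f_j(y)$ (the $j$-th coordinate of $y$) for $j\le n-1$, so the $f_j(\bm z_i)$ agree across $i$; and $f_n(\bm z_i)=x_i$ are pairwise distinct because the $x_i$ were the distinct points of $LL(y)$. Thus $(\bm z_1,\cdots,\bm z_k)$ solves \eqref{Eq: Shadow strata}. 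If a different system of representatives $Z_i'$ is chosen, one conjugates each $\bm z_i$ appropriately using $N_W(Z_i)$-cosets, which does not affect the $f_j$-values since the $f_j$ are $W$-invariant.

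For the "if" direction, suppose $\sum_i\op{codim}(Z_i)=n$ and \eqref{Eq: Shadow strata} has a solution $(\bm z_1,\cdots,\bm z_k)$. Set $y:=(f_1(\bm z_1),\cdots,f_{n-1}(\bm z_1))\in Y$; the equations in \eqref{Eq: Shadow strata} guarantee this is independent of the chosen index. The points $(y,f_n(\bm z_i))$ all lie on $\mathcal{H}$, since each $\bm z_i\in Z_i\subset V$ maps into the discriminant under $\rho$, and they are pairwise distinct by the open condition. Now I would invoke exactly the multiplicity bookkeeping from the preceding proof: by Lemma~\ref{Lemma: mult=codim} and Prop.~\ref{Prop:codim=l_R=mult=mult}, $\op{mult}_{f_n(\bm z_i)}(LL(y))\ge\op{codim}(Z_i)$, while the intersection $L_y\cap\mathcal{H}$ has exactly $n$ points counted with multiplicity. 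Since $\sum_i\op{codim}(Z_i)=n$, every inequality is an equality, the set $\{f_n(\bm z_1),\cdots,f_n(\bm z_k)\}$ is \emph{all} of $LL(y)$, and each $\bm z_i$ lies in $Z_i^{\op{reg}}$. By Prop.~\ref{Prop:labels are parabolic cox elts}, $\op{rlbl}(y,f_n(\bm z_i))$ is a Coxeter element of the parabolic $W_{Z_i}$, so $\op{rlbl}(y)$ has unordered passport $\{\bm Z\}$; hence $\{\bm Z\}$ is a passport.

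The only genuinely delicate point, and the one I would flag as the main obstacle, is the claim that the validity of \eqref{Eq: Shadow strata} is \emph{independent of the choice of representatives} $Z_i$ within their $W$-orbits — a solution for one choice should yield a solution for any other. This follows because replacing $Z_i$ by $wZ_i$ replaces $\bm z_i$ by $w\bm z_i$ and the $f_j$ are $W$-invariant, so all the equalities and inequalities in \eqref{Eq: Shadow strata} are preserved; but one must be slightly careful that the \emph{same} $w$ need not work for all $i$ simultaneously — different factors can be conjugated independently, which is fine since \eqref{Eq: Shadow strata} only constrains the $f_j$-values, not the $\bm z_i$ themselves. Everything else is a direct transcription of the multiplicity argument already carried out in the proof of the proposition, so the corollary is genuinely immediate once that proof is in hand; I would state it as such and keep the proof to one or two sentences pointing back.
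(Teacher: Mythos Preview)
Your proposal is correct and is exactly the approach the paper has in mind: the paper does not write out a separate proof at all, but simply states that ``the exact same argument as in the previous proof serves as a characterization of passports.'' Your write-up is a faithful unpacking of that same multiplicity bookkeeping in both directions, and your closing remark---that the corollary should be stated as immediate with a one-line pointer back to the proposition---matches the paper's treatment precisely.
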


When $W$ is a simply laced Coxeter group, Lyashko \cite[Thm.~5]{lyashko-valid-passports} determined all possible passports by using a generalization of Dynkin diagrams. His results however are still case by case. For the other real reflection groups, the answer appears in \cite{krattenthaler-Muller-decomposition-numbers}, while in the well-generated case only the infinite families have been considered \cite{ripoll-personal}.

The previous Corollary can easily be used to confirm these results, at least in the case of the classical groups, but doesn't seem to illuminate the general situation. We therefore ask:

\begin{question}
Give a coordinate free, or otherwise intrinsic, characterization of the possible passports for a well-generated complex reflection group $W$.
\end{question}

\begin{remark}
One might guess in the real case, that a tuple of parabolic types is a passport if and only if it can be realized as a partition of the vertices of the Coxeter diagram. By this we mean a partition of the simple generators in disjoint subsets, and the type of each parabolic subgroup generated by each such subset. This is in fact not the case. Already in type $A_5$, it is easy to see that $\big(A_1^3,A_2\big)$ is a valid passport, but clearly it cannot be realized by a partition of the Coxeter diagram.
\end{remark}

\subsection{Applications on enumeration}
\label{Section: Applications on enumeration}

One of the main reasons for introducing the shadow strata is that they are the natural object, on which a local analysis of the $LL$ map might give us finer enumerative information. The following proposition (compare with \cite[Thm.~3]{LZ-counting-I}) clarifies the relation between the size of fibers of the $LL$ map, and the enumeration of block factorizations, as suggested by the trivialization theorem. First, we give a somewhat unconventional definition for the degree of $LL$ when restricted to a stratum (it models \cite[Defn.~3.5]{LZ-counting-I}).

\begin{defn}
The degree of the $LL$ map on the stratum $Y_{\{\bm Z\}}$ is the number of preimages of an arbitrary point in $LL\big(Y_{\{\bm Z\}}\big)$. It is well-defined by the trivialization theorem.
\end{defn}

\begin{prop}\label{Prop: local degree of LL and Fact}
The number of block factorizations $\sigma$ with (ordered) passport $\big(\bm Z\big)$, denoted $\op{Fact}_W\big[(\bm Z)\big]$, and the degree of the $LL$ map on the stratum $Y_{\{\bm Z\}}$ are related via the equation:
$$\op{deg}(LL)|_{Y_{\{\bm Z\}}}=\dfrac{\big|\op{Aut}(|Z_1|,\cdots,|Z_k|)\big|}{\big|\op{Aut}([Z_1],\cdots,[Z_k])\big|}\cdot \op{Fact}_W\big[(\bm Z)\big],$$where $|Z_i|$ denotes the codimension of $Z_i$ (also the rank of $W_{Z_i}$), and where an automorphism of a tuple is a permutation of its entries that leaves it invariant.
\end{prop}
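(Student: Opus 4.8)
The plan is to count the fiber $LL^{-1}(e)$ over a generic point $e\in LL(Y_{\{\bm Z\}})$ in two ways, using the trivialization theorem as the bridge between geometry and combinatorics. First I would fix a generic configuration $e\in LL(Y_{\{\bm Z\}})$; genericity here means that the multiplicities of the points of $e$ are exactly $|Z_1|,\dots,|Z_k|$ (with the appropriate grouping) and no further coincidences occur, so that $e$ lies in the open stratum of the symmetric product matching the codimension profile $(|Z_1|,\dots,|Z_k|)$. By the Trivialization Theorem (Thm.~\ref{thm: trivialization}) together with Prop.~\ref{Prop:codim=l_R=mult=mult}, a point $y\in LL^{-1}(e)\cap Y_{\{\bm Z\}}$ is the same datum as a block factorization $\sigma=\op{rlbl}(y)$ together with the bijection between its factors and the points of $e$ that records which factor sits over which point (this bijection is implicit in the $\boxtimes$ construction, since the labels $w_i$ in $E_n\boxtimes D_\bullet(c)$ are indexed so that $n_i=l_R(w_i)$ matches the multiplicity of $x_i$).

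Next I would do the bookkeeping on both sides. On the one hand, $\op{deg}(LL)|_{Y_{\{\bm Z\}}}$ is by definition $\#\big(LL^{-1}(e)\big)$ for such generic $e$ (all preimages automatically land in $Y_{\{\bm Z\}}$ by Prop.~\ref{Prop:codim=l_R=mult=mult}, since $e$ has the right multiplicities). On the other hand, each point of the fiber corresponds to a pair $(\sigma, \phi)$ where $\sigma$ is a block factorization and $\phi$ is a way of matching the ordered list of factors of $\sigma$ to the points of $e$ compatibly with multiplicities. Collecting these pairs according to the underlying unordered multiset of factors, the number of pairs with a fixed $\sigma$ equals the number of points of $e$ whose multiplicity profile, as an unordered tuple, agrees with the codimension profile of $\sigma$ — i.e. the number of orderings of the points of $e$ that realize the passport $(\bm Z)$ divided by nothing, but one must be careful: the points of $e$ with equal multiplicity are genuinely distinct points, while factors of $\sigma$ with the same parabolic class need not be distinguishable as group elements. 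The cleanest accounting is: $\#\big(LL^{-1}(e)\big)$ equals the number of pairs $(\sigma,\phi)$, and summing over $\sigma$ with ordered passport $(\bm Z)$ (there are $\op{Fact}_W[(\bm Z)]$ of them) each contributes a number of matchings equal to $\#\{\text{orderings of the points of }e\text{ whose multiplicity sequence is }(|Z_1|,\dots,|Z_k|)\}$ divided by the number of reorderings of the factors of $\sigma$ that fix $\sigma$ as an ordered tuple. Since the $e$ we picked has its $k$ points in a fixed but arbitrary order with the correct multiplicity sequence, the count of valid matchings is $\op{Aut}(|Z_1|,\dots,|Z_k|)$ — the number of permutations of $(1,\dots,k)$ preserving the codimension sequence — and the overcount from indistinguishable factors in $\sigma$ of the \emph{same parabolic class} is exactly $\op{Aut}([Z_1],\dots,[Z_k])$. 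Dividing gives the stated formula.

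The main obstacle — and the step that needs care — is the precise symmetry-counting argument, i.e. disentangling $\op{Aut}(|Z_1|,\dots,|Z_k|)$ from $\op{Aut}([Z_1],\dots,[Z_k])$. The subtlety is that two factors $w_i, w_j$ of $\sigma$ with $V^{w_i}$ and $V^{w_j}$ in the same $W$-orbit still occupy \emph{distinct} ordered slots of $\sigma$ and sit over \emph{distinct} points of $e$, so one cannot naively quotient by all permutations fixing the codimension sequence; one may only quotient by those that actually fix the ordered passport $(\bm Z)$, which is the group of order $\op{Aut}([Z_1],\dots,[Z_k])$. Conversely, when the codimensions coincide but the parabolic classes differ, the corresponding points of $e$ are interchangeable \emph{as points of the configuration} but the matching $\phi$ must respect the classes, and this is where the factor $\op{Aut}(|Z_1|,\dots,|Z_k|)/\op{Aut}([Z_1],\dots,[Z_k])$ enters as a genuine multiplicity rather than cancelling. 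I would make this rigorous by choosing an ordering of the points of $e$ once and for all, viewing a fiber point as a function from the set of (labelled) factor-slots of an ordered $\sigma$ to the (labelled) points of $e$, counting such functions directly, and then passing from ordered $\sigma$'s back to the orbit count $\op{Fact}_W[(\bm Z)]$ via the orbit-stabilizer relation for the $S_k$-action permuting slots. Everything else — that all preimages land in the right stratum, that the generic $e$ exists, that the correspondence is a bijection — follows from Prop.~\ref{Prop:codim=l_R=mult=mult}, Prop.~\ref{Prop:labels are parabolic cox elts}, and Thm.~\ref{thm: trivialization} with no further work.
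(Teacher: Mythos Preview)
Your approach has the right skeleton --- pick a configuration $e$ with the correct multiplicity profile and invoke the trivialization theorem --- but the execution contains two genuine errors.

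First, the claim that ``all preimages automatically land in $Y_{\{\bm Z\}}$'' is false. The multiplicity profile of $e$ fixes only the \emph{codimensions} $|Z_i|$ of the factors (via Prop.~\ref{Prop:codim=l_R=mult=mult}), not their $W$-orbits $[Z_i]$. For instance, when $W$ has two orbits of reflecting hyperplanes, a generic $e$ (all simple points) has preimages lying in many different shadow strata. The degree in question is by definition $\#\big(LL^{-1}(e)\cap Y_{\{\bm Z\}}\big)$, not $\#\big(LL^{-1}(e)\big)$.

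Second, the pairing $(\sigma,\phi)$ is spurious: the trivialization theorem already identifies a fiber point with a \emph{single} ordered factorization $(w_1,\dots,w_k)$, the matching to the points $x_i$ of $e$ being fixed once and for all by the complex-lexicographic order. There is no extra bijection $\phi$ to count, and factors with the same parabolic class are \emph{not} indistinguishable (they are typically different group elements in different slots). Consequently, your orbit-stabilizer bookkeeping does not apply.

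The correct count is this. Points of $LL^{-1}(e)\cap Y_{\{\bm Z\}}$ are exactly the ordered factorizations whose $i$-th factor has length $|Z_i|$ \emph{and} whose unordered passport is $\{\bm Z\}$; equivalently, factorizations whose \emph{ordered} passport is some rearrangement $(\bm Z')$ of $(\bm Z)$ preserving the codimension sequence. There are $\op{Aut}(|Z_1|,\dots,|Z_k|)/\op{Aut}([Z_1],\dots,[Z_k])$ such rearrangements. The step you are missing entirely is why each of them contributes the \emph{same} number $\op{Fact}_W[(\bm Z)]$ of factorizations: this requires a separate argument, namely that the Hurwitz action (Defn.~\ref{Defn: The (algebraic) Hurwitz action}) furnishes a bijection between the factorizations with ordered passport $(\bm Z)$ and those with any rearranged passport $(\bm Z')$. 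Without that observation the sum over the various $(\bm Z')$ cannot be collapsed into the stated product.
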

\begin{proof}
First of all, notice that if $\big(\bm Z'\big)$ is any permutation of the parabolic classes of $\big(\bm Z\big)$, we have $\op{Fact}_W\big[(\bm Z)\big]=\op{Fact}_W\big[(\bm Z')\big]$. This is because the two sets of factorizations are in bijection via the Hurwitz action of any braid that permutes the corresponding classes.

The statement is now a corollary of the trivialization theorem (Thm.~\ref{thm: trivialization}). Indeed, consider a point configuration $e\in E_n$ whose elements $x_i$ (ordered complex-lexicographically) have multiplicities $n_i:=|Z_i|$. Now, by the trivialization theorem, the fiber $LL^{-1}(e)$ is in bijection with block factorizations $\sigma=(w_1,\cdots,w_k)$ for which $l_{\mathcal{R}}(w_i)=n_i$.

If we further restrict on the stratum $Y_{\{\bm Z\}}$, the points in the preimage $LL^{-1}(e)\bigcap Y_{\{\bm Z\}}$ correspond to block factorizations $\sigma$ with the same {\it ordered} prescribed lengths $n_i$, but which also have {\it unordered} passport $\{\bm Z\}$. Finally, there are exactly $\displaystyle \frac{\big|\op{Aut}(|Z_1|,\cdots,|Z_k|)\big|}{\big|\op{Aut}([Z_1],\cdots,[Z_k])\big|}$ many ways to permute the terms in a passport $\big(\bm Z\big)$, respecting the ranks $|Z_i|$. 
\end{proof}

\subsection{The complete answer is known in type \texorpdfstring{$A$}{A}}
\label{Section: The complete answer is known in type A}

{\it A priori} the geometry of shadow strata might be quite complicated. However, there is hope that our arguments of Section~\ref{Section: Primitive factorizations of a Cox elt} may, to a certain degree, be extended. In fact, most shadow strata (at least when $[Z_i]\neq [Z_j]$) can be described as intersections $Y_{\{\bm Z\}}=[Z_1^{\op{reg}}]_Y\cap\cdots\cap[Z_i^{\op{reg}}]_Y$ of our {\it primitive} strata $[Z_i^{\op{reg}}]_Y$ (see Section~\ref{Section: Lifting the LL map}).

In the symmetric group case, this approach has been successful in completely determining the local degrees of the $LL$ map. The following theorem, in conjunction with Prop.~\ref{Prop: local degree of LL and Fact} gives a purely geometric proof of the celebrated Goulden-Jackson formula \cite[Thm.~3.2]{goulden-jackson-trees-and-cacti}.

\begin{thm}\cite[Thm.~5.2.2]{LZ-graphs-on-surfaces}\label{Thm: LZ main result}
When $W$ is the symmetric group $S_n$, and $\big\{ \bm Z\big\}=\big\{[Z_1],\cdots,[Z_k]\big\}$ is an arbitrary passport, the restriction of the $LL$ map to the shadow stratum $Y_{\{\bm Z\}}$ is a smooth finite mapping of degree
$$\op{deg}(LL)|_{Y_{\{\bm Z\}}}=n^{k-1}\cdot\dfrac{\big|\op{Aut}(|Z_1|,\cdots,|Z_k|)\big|}{\big|\op{Aut}([Z_1],\cdots,[Z_k])\big|}\cdot \prod_{i=1}^k\dfrac{\big(\op{dim}(Z_i)\big)!}{[N_W(Z_i):W_{Z_i}]}.$$
\end{thm}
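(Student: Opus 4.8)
The strategy mirrors the two-path argument of Theorem~\ref{Thm: Primitive facto enume}, but now the flat is replaced by the product $Z_1\times\cdots\times Z_k$ of representative flats and the lifted morphism by a common generalization of the maps $\widehat{LL}$. First I would fix a point configuration $e\in E_n$ whose $i$-th point (in complex-lexicographic order) has multiplicity $n_i=|Z_i|$ and whose points are otherwise in ``general position'' (so that, as in Theorem~\ref{Thm: Primitive facto enume}, $e$ is unramified for all the morphisms in sight). By Proposition~\ref{Prop: local degree of LL and Fact} it then suffices to compute $\#\{LL^{-1}(e)\cap Y_{\{\bm Z\}}\}$, and by the trivialization theorem together with Proposition~\ref{Prop:codim=l_R=mult=mult} this fiber is contained in $\reg{Z_1}\cdot_Y\,\cdots\,\cdot_Y\reg{Z_k}$ (the points of $Y$ over which the $i$-th marked point of $\mathcal{H}$ genuinely lies in the stratum $\orb{\reg{Z_i}}$).

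\textbf{Lifting and the overcounting factor.} Next I would build the lift: over each $Z_i$ we have coordinates ${\bm z}_i$ and the restricted invariants $f_j({\bm z}_i)$, and I would consider the incidence variety $V(\{\bm Z\})$ of \eqref{Eq: Shadow strata} together with the ``simultaneous $\widehat{LL}$'' morphism sending $({\bm z}_1,\dots,{\bm z}_k)$ to the configuration $LL(y({\bm z}_1))$ with the $k$ marked points $f_n({\bm z}_1),\dots,f_n({\bm z}_k)$, recorded in a suitable multiply-decorated configuration space $E_{(n_1,\dots,n_k)}$. In the symmetric group the key simplification is that this incidence variety is smooth and the morphism is finite; so I would invoke \cite[Thm.~5.2.2]{LZ-graphs-on-surfaces} (or reproduce its combinatorial core via Riemann existence and a Lagrange-inversion / Bezout degree count as in the Goulden--Jackson setting) to get that the degree of the simultaneous lift over $E_{(n_1,\dots,n_k)}^{\op{reg}}$ equals $n^{k-1}\prod_i (\op{dim}Z_i)!$ — each factor $h^{\op{dim}Z_i}(\op{dim}Z_i)!$ from the individual $\widehat{LL}$ (with $h=n$ in type $A$), corrected by the constraint tying the ``centered'' conditions together, which in type $A$ collapses the naive product $\prod h^{\op{dim}Z_i}=n^{n-k}\cdot(\text{stuff})$ down to $n^{k-1}$ via the Vieta relations. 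Then, exactly as in Theorem~\ref{Thm: Primitive facto enume}, descending from $\reg{Z_i}$ to $\orb{\reg{Z_i}}$ along $\rho$ introduces the index $[N_W(Z_i):W_{Z_i}]$ as an overcounting factor for each $i$, and the forgetful map from the multiply-decorated space back to $E_n$ introduces the factor $\op{Aut}(|Z_1|,\dots,|Z_k|)/\op{Aut}([Z_1],\dots,[Z_k])$ (the number of ways to distribute the decorations among marked points of equal multiplicity, corrected for coincident parabolic classes). Assembling these gives the stated formula.

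\textbf{Main obstacle.} The genuine difficulty — the part that is special to type $A$ and not available uniformly — is establishing that the shadow stratum $Y_{\{\bm Z\}}$, equivalently the incidence variety $V(\{\bm Z\})$, is \emph{smooth} and that the simultaneous lifted morphism is \emph{finite} with the claimed degree. In the symmetric group case this rests on the concrete realization of $LL$ as the critical-values map on polynomials and on Riemann's existence theorem, which lets one count the branched covers with prescribed monodromy directly; the degree $n^{k-1}$ is then a Hurwitz-number computation (Goulden--Jackson). For general $W$ one does not have this, and indeed Proposition~\ref{Prop: local degree of LL and Fact} only extracts the degree \emph{if} one already knows the stratum is nice enough for the trivialization count to apply cleanly. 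So the honest content of the proof is: (i) cite \cite[Thm.~5.2.2]{LZ-graphs-on-surfaces} for the smoothness/finiteness/degree statement, and (ii) verify that the bookkeeping of the two overcounting factors $\prod_i[N_W(Z_i):W_{Z_i}]$ and $\op{Aut}(|Z_\bullet|)/\op{Aut}([Z_\bullet])$ matches the passage through Proposition~\ref{Prop: local degree of LL and Fact}; step (i) is where all the weight lies, and reproving it case-free is precisely the open problem flagged in \S\ref{Section: The complete answer is known in type A}.
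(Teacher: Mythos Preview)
The paper does not actually prove this theorem; it is cited from \cite{LZ-graphs-on-surfaces} and accompanied only by a two-sentence sketch: lift $Y$ via a ramified covering so that the primitive strata $[Z_i^{\op{reg}}]_Y$ become \emph{transverse}, then compute the degrees of the resulting lifted shadow strata and compare (via \cite[Lemma~5.2.10]{LZ-graphs-on-surfaces}). Your framework is in the same spirit but lifts differently: you pass to the incidence variety inside $Z_1\times\cdots\times Z_k$, which is closer to the paper's own Theorem~\ref{Thm: Primitive facto enume} than to the Lando--Zvonkin argument. That reframing is fine, and your bookkeeping of the two overcounting factors (the $[N_W(Z_i):W_{Z_i}]$'s from $\rho$ and the $\op{Aut}$ ratio from the forgetful map) is correct and matches Proposition~\ref{Prop: local degree of LL and Fact}.

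The real gap is the degree computation itself. Your claim that the factor $n^{k-1}$ arises by ``collapsing $\prod h^{\op{dim}Z_i}$ down via the Vieta relations'' is not right: the individual $\widehat{LL}$ degrees do not simply multiply under the incidence constraint, and the exponent $k-1$ has nothing to do with $\sum\op{dim}Z_i$. In the Lando--Zvonkin proof the $n^{k-1}$ comes from the transversality of the lifted primitive strata combined with an explicit degree count on each (this is where the concrete realization of $LL$ as the critical-values map for polynomials, and ultimately a Lagrange-inversion / cactus-tree argument, enters). You correctly identify that this is the load-bearing step and that it is type-$A$ specific, but your proposed mechanism for it is not the one that works; you would need to either reproduce the transversality argument of \cite[\S5.2]{LZ-graphs-on-surfaces} or invoke the Goulden--Jackson count directly, not derive $n^{k-1}$ from a product of $\widehat{LL}$ degrees.
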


In the proof of this theorem, the base space $Y$ is lifted via a ramified covering to a space where the primitive strata intersect transversely to form shadow strata. The result then follows by computing the degrees of these lifted shadow strata and comparing them with the local degrees of the $LL$ map (see \cite[Lemma~5.2.10]{LZ-graphs-on-surfaces}, also \cite[end of p.~15]{LZ-counting-I}). 

The lift of the primitive strata discussed above relies on the interpretation of the $LL$ map in terms of polynomials, their critical points and critical values as discussed in Section~\ref{sec:LL_on_polys};  we have not yet been able to give an analogous construction that works for all reflection groups. Still however, we hope that a similar local analysis of the $LL$ map might be possible in the future, for at least some class of better-behaved passports. The results of \cite{krattenthaler-Muller-decomposition-numbers} along with Prop.~\ref{Prop: local degree of LL and Fact} imply formulas for the local degrees of $LL$ that are, at least in types $B$ and $D$, similar to the previous theorem. We are not aware however, of a general (conjectural) form that would work for all groups and all passports.

\subsection{Hurwitz action on block factorizations}
\label{Section: Hurwitz action on block facto}

As is immediate from its definition (Defn.~\ref{Defn: The (algebraic) Hurwitz action}), the Hurwitz action respects the set of conjugacy classes in a factorization. It is therefore well defined on the set of block factorizations $\sigma$ with a fixed, (unordered) passport $\{\bm Z\}$. The shadow strata $Y_{\{\bm Z\}}$ are again a natural geometric object through which we can study its orbits. The following is a refinement of \cite[Thm.~5.5]{ripoll-french-paper}:

\begin{thm}\label{Thm: Hurwitz action and path connectedness}
The $\op{rlbl}$ map defines a bijection between the path-connected components of the shadow stratum $Y_{\{\bm Z\}}$ and the orbits of the Hurwitz action on block factorizations $\sigma$ with unordered passport $\{\bm Z\}$. In particular, the action is transitive if and only if the stratum is path-connected.
\end{thm}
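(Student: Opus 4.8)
The plan is to exploit the equivariance of $\op{rlbl}$ with respect to the Hurwitz action and the path-lifting action (Lemma~\ref{Lem: rlbl is equivariant Hurwitz-path lifting}), together with the fact that the trivialization theorem (Thm.~\ref{thm: trivialization}) makes $LL\times\op{rlbl}$ a bijection onto $E_n\boxtimes D_{\bullet}(c)$. The key observation is that a path in $Y_{\{\bm Z\}}$ projects, under $LL$, to a path in the configuration space $E_n$ along which the multiplicities of the colliding points are constrained only by the unordered passport; and conversely any such path in $E_n$ lifts (Corol.~\ref{Corol: Path lifting property of LL}) to a path in $Y$ that remains inside $Y_{\{\bm Z\}}$, because the passport of $\op{rlbl}(y)$ is locally determined by the multiplicity profile of $LL(y)$ via Prop.~\ref{Prop:codim=l_R=mult=mult} and Prop.~\ref{Prop:labels are parabolic cox elts}.

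First I would make precise the statement: fix a base configuration $e\in E_n$ whose multiplicity profile matches the ranks $|Z_i|$, so that by the trivialization theorem the fiber $LL^{-1}(e)\cap Y_{\{\bm Z\}}$ is in bijection (via $\op{rlbl}$) with the set of block factorizations with unordered passport $\{\bm Z\}$. I would then show that two points $y_0,y_1$ in this fiber lie in the same path-connected component of $Y_{\{\bm Z\}}$ if and only if $\op{rlbl}(y_0)$ and $\op{rlbl}(y_1)$ lie in the same Hurwitz orbit. For the ``only if'' direction: a path in $Y_{\{\bm Z\}}$ from $y_0$ to $y_1$ can be pushed forward to a path $\gamma$ in $E_n$ from $e$ to $e$ (possibly passing through configurations with different multiplicity structure, but the endpoints have profile $\{\bm Z\}$); after a homotopy rel endpoints inside the image $LL(Y_{\{\bm Z\}})$ we may assume $\gamma$ stays in the open stratum where the $k$ colliding clusters stay distinct, so $\gamma$ is a braid on $k$ strands, and then Lemma~\ref{Lem: rlbl is equivariant Hurwitz-path lifting} gives $\op{rlbl}(y_1)=g*\op{rlbl}(y_0)$ for the corresponding $g\in B_k$ — here one must be careful that the ``path lifting action'' of $\gamma$ starting at $y_0$ indeed ends at $y_1$, which follows from the uniqueness built into the trivialization theorem. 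For the ``if'' direction: given $g\in B_k$ with $g*\op{rlbl}(y_0)=\op{rlbl}(y_1)$, realize $g$ by a loop $\gamma$ in the configuration space of $k$ points (blown up to multiplicity clusters), lift it via Corol.~\ref{Corol: Path lifting property of LL} to a path in $Y$ starting at $y_0$; this path stays in $Y_{\{\bm Z\}}$ because the cluster multiplicities are preserved along $\gamma$, and it ends at a point with label $g*\op{rlbl}(y_0)=\op{rlbl}(y_1)$, hence at $y_1$ by injectivity of $LL\times\op{rlbl}$.

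Next I would upgrade this statement about a single fiber to the full stratum. The point is that $LL$ restricted to $Y_{\{\bm Z\}}$ is (by the definition of the degree on a stratum, and the trivialization theorem) a covering-like map onto $LL(Y_{\{\bm Z\}})$ with finite fibers, and $LL(Y_{\{\bm Z\}})$ is connected (it is the image of an irreducible parameter space of configurations with the prescribed multiplicity profile — or one argues directly that the set of such profiles is path-connected). Therefore every point of $Y_{\{\bm Z\}}$ can be joined by a path within $Y_{\{\bm Z\}}$ to some point of the chosen fiber $LL^{-1}(e)$: move the $LL$-image along a path in $LL(Y_{\{\bm Z\}})$ to $e$ and lift. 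Consequently the path-components of $Y_{\{\bm Z\}}$ are exactly the sets swept out by path-components intersected with the fiber, and by the fiberwise analysis these correspond bijectively under $\op{rlbl}$ to Hurwitz orbits. The final sentence of the theorem (transitivity $\iff$ path-connectedness) is then immediate.

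The main obstacle I anticipate is the ``only if'' direction's homotopy step: a priori a path in $Y_{\{\bm Z\}}$ need only keep the \emph{endpoints} in the open stratum where the $k$ clusters are distinct — in the interior of the path two clusters could merge (say a cluster of rank $2$ and one of rank $1$ momentarily collide into a multiplicity-$3$ point whose label is a block factor of rank $3$ whose \emph{own} passport still refines to $\{\bm Z\}$ when perturbed) so that $\gamma$ is not literally a braid on $k$ strands. One must argue that such a path can be homotoped (rel endpoints, and staying inside $LL(Y_{\{\bm Z\}})$, equivalently keeping the lift inside $Y_{\{\bm Z\}}$) to one avoiding these higher collisions; this is a transversality/general-position argument using that the ``bad'' locus (configurations where two clusters coincide) has positive codimension in $LL(Y_{\{\bm Z\}})$ — which in turn relies on the openness and properness of $LL$ recorded in \S\ref{Section: Finiteness of the LL map and quasi-homogeneity}. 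Making this rigorous, rather than appealing to the intuitive picture of Fig.~\ref{Fig:label map}, is the technical heart of the proof; everything else is a direct unwinding of the trivialization theorem and the Hurwitz-equivariance of $\op{rlbl}$.
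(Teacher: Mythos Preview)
Your overall structure matches the paper's: both directions use the equivariance Lemma~\ref{Lem: rlbl is equivariant Hurwitz-path lifting}, with the path-lifting property (Corol.~\ref{Corol: Path lifting property of LL}) for the ``if'' direction and the injectivity of $LL\times\op{rlbl}$ (Thm.~\ref{thm: trivialization}) to identify the endpoint of the lift. The paper also works directly with two arbitrary points $y,y'\in Y_{\{\bm Z\}}$ rather than first reducing to a single fiber and then ``upgrading''; this saves you the connectivity argument for $LL(Y_{\{\bm Z\}})$.

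Your anticipated ``main obstacle'' is a phantom, and it stems from a misreading of the definition of the shadow stratum. The strata $Y_{\{\bm Z\}}$ are \emph{disjoint}: a point $y$ belongs to $Y_{\{\bm Z\}}$ precisely when the passport of $\op{rlbl}(y)$ \emph{equals} $\{\bm Z\}$, not when it refines to it. By Prop.~\ref{Prop:codim=l_R=mult=mult}, the passport determines the multiplicity profile of $LL(y)$ exactly. Hence along any path that stays in $Y_{\{\bm Z\}}$, the image under $LL$ has $k$ distinct points of constant multiplicities throughout --- two clusters can never merge, because at the moment of collision the label would have only $k-1$ factors and the point would leave $Y_{\{\bm Z\}}$. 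So the projected path is already a braid on $k$ strands and Lemma~\ref{Lem: rlbl is equivariant Hurwitz-path lifting} applies directly; no homotopy or transversality argument is needed. This is exactly the observation the paper makes in one line (``since $\beta$ stays in $Y_{\{\bm Z\}}$, the multiplicities of the points in $\gamma(t):=LL(\beta(t))$ are constant''), and once you see it, what you call the ``technical heart of the proof'' evaporates.
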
 
\begin{proof}
This is again a corollary of the trivialization theorem (Thm.~\ref{thm: trivialization}). To begin with, notice that after the surjectivity of the $LL\times\op{rlbl}$ map, the theorem is equivalent to checking that two points $y,y'\in Y_{\{\bm Z\}}$ have labels in the same Hurwitz orbit {\it if and only if} they are connected by a path in $Y_{\{\bm Z\}}$. In what follows, let $k$ be the number of the (not necessarily distinct) parabolic classes that constitute the passport $\{\bm Z\}$. 

For the forward direction, pick any two points $y,y'$ in the same component of $Y_{\{\bm Z\}}$ and a path $\beta$ between them. Notice that since $\beta$ stays in $Y_{\{\bm Z\}}$, the multiplicities of the points in $\gamma(t):=LL(\beta(t))\in E_n$ are constant. Therefore, Lemma~\ref{Lem: rlbl is equivariant Hurwitz-path lifting} applies and we have $\op{rlbl}(y')=g*\op{rlbl}(y)$, where $g$ realizes the braid $\gamma(t)$ as an element of the braid group on $k$ strands $B_k$. In other words, the labels of $y$ and $y'$ are in the same Hurwitz orbit.

For the other direction, start with two elements $y,y'\in Y_{\{\bm Z\}}$, and an element $g$ in the braid group $B_k$, such that $\op{rlbl}(y')=g*\op{rlbl}(y)$. Consider now the two configurations $LL(y)$ and $LL(y')$ and a path $\gamma$ between them (in the connected configuration space $E_k\subset E_n$) that gives rise to the braid $g$. After Corol.~\ref{Corol: Path lifting property of LL}, we can lift $\gamma$ to a path $\beta(t):[0,1]\rightarrow Y$, for which $\beta(0)=y$ and $LL(\beta(1))=LL(y')$. Now by Lemma~\ref{Lem: rlbl is equivariant Hurwitz-path lifting}, we will further have that $\op{rlbl}(\beta(1))=g*\op{rlbl}(y)=\op{rlbl}(y')$. Putting these together, we get $$\big( LL\times\op{rlbl}\big) (\beta(1))=\big(LL\times\op{rlbl}\big)(y'),$$ which according to the trivialization theorem forces $\beta(1)=y'$. That is, $y$ and $y'$ are connected by the path $\beta$.
\end{proof}

In certain cases, one can easily describe parametrizations of the shadow strata that imply connectivity. For the symmetric group, this idea was quite successful in \cite{khovanskii-zdravkovska-hurwitz-transitivity}. For well-generated groups, the following theorem by Ripoll tackles primitive factorizations. The proof relies on the geometry of the maps $Z\xrightarrow{\rho}[Z]\xrightarrow{\op{pr}_Y}[Z]_Y$, as in Fig.~\ref{Fig: Lift of the LL map}.

\begin{cor}\cite{ripoll-french-paper}
For an arbitrary flat $Z\in\mathcal{L}_W$, the Hurwitz action is transitive on the set of primitive factorizations of type $[Z]$ (see Defn.~\ref{Defn: parabolics and primitives of type Z}).
\end{cor}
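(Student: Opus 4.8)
The plan is to leverage Theorem~\ref{Thm: Hurwitz action and path connectedness}, which reduces the transitivity of the Hurwitz action on primitive factorizations of type $[Z]$ to the statement that the corresponding shadow stratum is path-connected. For primitive factorizations, the relevant passport is the (unordered) pair $\{[Z],\underbrace{[H_1],\cdots}_{}\}$ consisting of $[Z]$ together with $k := \op{codim}(Z)$ hyperplane classes; but in fact we do not need to be careful about which hyperplane classes occur, since a primitive factorization $c = c_{[Z]}\cdot t_1\cdots t_{n-k}$ of type $[Z]$ has its second ``half'' a reduced reflection factorization of the parabolic Coxeter complement, which is unconstrained. The cleaner route, and the one matching the remark in the excerpt that the proof ``relies on the geometry of the maps $Z\xrightarrow{\rho}[Z]\xrightarrow{\op{pr}_Y}[Z]_Y$,'' is to show directly that the primitive stratum $[Z^{\op{reg}}]_Y\subset Y$ (the locus of points whose label has a factor of type $[Z]$) is path-connected, and then invoke Theorem~\ref{Thm: Hurwitz action and path connectedness}.

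First I would recall from Section~\ref{Section: Lifting the LL map} that $[Z^{\op{reg}}]_Y$ is precisely the image of $Z^{\op{reg}}$ under the composite $Z\xrightarrow{\rho}[Z]\xrightarrow{\op{pr}_Y}[Z]_Y\hookrightarrow Y$, which is the bottom-left path of Figure~\ref{Fig: Lift of the LL map} restricted to the regular part. Since $Z\cong\CC^{n-k}$ is irreducible and $Z^{\op{reg}}$ is the complement in $Z$ of finitely many proper linear subspaces (the traces $H\cap Z$ for hyperplanes $H\not\supset Z$), the space $Z^{\op{reg}}$ is a Zariski-open subset of an irreducible affine variety, hence irreducible and in particular path-connected in the complex topology (the complement of a complex hypersurface in $\CC^m$ is connected). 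The continuous image of a path-connected space is path-connected, so $[Z^{\op{reg}}]_Y$ is path-connected. By the first part of Theorem~\ref{Thm: Hurwitz action and path connectedness}, the $\op{rlbl}$ map sends path-connected components of the shadow stratum to Hurwitz orbits bijectively; since the stratum has a single component, the Hurwitz action on the corresponding block factorizations is transitive.

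There is one point that needs care: Theorem~\ref{Thm: Hurwitz action and path connectedness} is phrased for shadow strata $Y_{\{\bm Z\}}$ indexed by a full passport, whereas $[Z^{\op{reg}}]_Y$ a priori collects points whose label has \emph{some} factor of type $[Z]$, without fixing the types of the remaining factors. The reconciliation is that $[Z^{\op{reg}}]_Y$ decomposes as the disjoint union of those shadow strata $Y_{\{[Z],\bm W\}}$ where $\{\bm W\}$ ranges over passports of the parabolic complement; and by Corol.~\ref{Corol: parabolic equals noncrossin} together with the fact (used in the proof of the trivialization theorem) that $\op{Red}_{W_Z}(c_Z)=\op{Red}_W(c_Z)$, every primitive factorization of type $[Z]$ has second part in $\op{Red}_{W_{Z'}}(c_{Z'})$ for the relevant $Z'$, so all these strata are genuinely hit. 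Rather than chase this union, the slickest argument is simply to note that a primitive factorization of type $[Z]$ and fixed length profile corresponds to a point in $LL^{-1}(e)\cap[Z^{\op{reg}}]_Y$ for the configuration $e$ of \eqref{EQ: choice of e in E_n to be lifted}, and to run the ``other direction'' argument from the proof of Theorem~\ref{Thm: Hurwitz action and path connectedness} verbatim inside $[Z^{\op{reg}}]_Y$: given two primitive factorizations of type $[Z]$ related by a braid $g\in B_{n-k+1}$, lift a path realizing $g$ (via Corol.~\ref{Corol: Path lifting property of LL}) and check it stays in $[Z^{\op{reg}}]_Y$ since the multiplicity profile is constant and the decorated multiplicity stays equal to $k=\op{codim}(Z)$, forcing the lifted point to remain in $[Z^{\op{reg}}]_Y$ by Prop.~\ref{Prop:codim=l_R=mult=mult}.

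The main obstacle is really only bookkeeping: making sure that the path one lifts through $E_n$ (or through the decorated space $E_{(k,1^{n-k})}$) never leaves $[Z^{\op{reg}}]_Y$, i.e. that the decorated point never merges with another point of the configuration and never drops below multiplicity $k$. This is exactly where the ``regular part'' condition in Prop.~\ref{Prop:codim=l_R=mult=mult} does the work — a point of $[Z]_Y$ on which $LL$ assigns multiplicity exactly $k$ to the $f_n$-coordinate of $Z$ must lie over $[Z^{\op{reg}}]$, not over a deeper stratum — and it is the only nontrivial input beyond the irreducibility of $Z^{\op{reg}}$ and the trivialization theorem. Everything else is the standard transport-of-structure argument already carried out in Theorem~\ref{Thm: Hurwitz action and path connectedness}.
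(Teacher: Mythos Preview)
Your approach is the one the paper's one-line hint points to (``the proof relies on the geometry of the maps $Z\xrightarrow{\rho}[Z]\xrightarrow{\op{pr}_Y}[Z]_Y$''), and the core idea---pushing forward the connectedness of an open subset of $Z$ and then invoking Theorem~\ref{Thm: Hurwitz action and path connectedness}---is correct. There is, however, a genuine imprecision that you flag but do not cleanly resolve. The set $[Z^{\op{reg}}]_Y$ is strictly larger than the locus corresponding to primitive factorizations of type $[Z]$: a point $y\in[Z^{\op{reg}}]_Y$ certainly has one factor of type $[Z]$, but nothing prevents two of the remaining simple points in $LL(y)$ from coalescing, giving a non-reflection second factor. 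Connectedness of $[Z^{\op{reg}}]_Y$ therefore does not by itself yield connectedness of the primitive locus. Your proposed fix (``run the other direction of Theorem~\ref{Thm: Hurwitz action and path connectedness}'') goes the wrong way: it assumes the two factorizations are already Hurwitz-related and produces a connecting path, whereas transitivity requires the converse.

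The clean repair is to replace $Z^{\op{reg}}$ by the smaller set $U:=\widehat{LL}^{-1}\big(\reg{E_{(k,1^{n-k})}}\big)\subset Z$. Since $\widehat{LL}$ is a polynomial morphism and $\reg{E_{(k,1^{n-k})}}$ is Zariski-open in $E_{(k,1^{n-k})}\cong\CC^{n-k}$, the set $U$ is a nonempty (by finiteness, hence surjectivity, of $\widehat{LL}$) Zariski-open subset of the irreducible variety $Z\cong\CC^{n-k}$, and is therefore path-connected. Its image under $\op{pr}_Y\circ\rho$ is \emph{exactly} the primitive locus: regularity of the decorated configuration encodes simultaneously that the $[Z]$-factor has length exactly $k$ (so $\bm z\in Z^{\op{reg}}$) and that the other $n-k$ factors are reflections. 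Now the forward direction of the proof of Theorem~\ref{Thm: Hurwitz action and path connectedness} applies verbatim to this union of shadow strata, since the only property used there is that the multiplicity profile of $LL$ stays constant (here at $(k,1^{n-k})$) along the path.
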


\section*{Acknowledgments}

This paper contains, and is an extension of, results that first appeared in the author's thesis \cite{theo-thesis}. We would like to thank our former advisor Vic Reiner for introducing us to the world of Coxeter combinatorics and guiding us through its many marvelous gems.

We would also like to thank David Bessis, Guillaume Chapuy, Matthieu Josuat-Verg{\'e}s, Joel Lewis, and Vivien Ripoll, for many interesting discussions. Furthermore, we would like to thank the two referees for a very thorough reading of the paper and for their suggestions which strongly improved the presentation of the main results. In particular, Sections 4.3 and 5.4 were added after their recommendation. Finally, we thank Dennis Stanton hoping he will accept the discussion of Section~\ref{Section When N(Z)/W(Z) is a refl group} as a revised answer to one of his---always thought provoking---questions during our defense.

We state that there is no conflict of interest related to this work.

\printbibliography

\Address

\end{document}